\newcounter{todocounter}
\DeclareDocumentCommand\addreference{g}{\stepcounter{todocounter}\todo[color = blue!30]{\thetodocounter. Add reference\IfNoValueF{#1}{: #1}}\xspace}
\DeclareDocumentCommand\checkthis{g}{\stepcounter{todocounter}\todo[color = red!50]{\thetodocounter. Check this\IfNoValueF{#1}{: #1}}\xspace}
\DeclareDocumentCommand\fixthis{g}{\stepcounter{todocounter}\todo[color = orange!50]{\thetodocounter. Fix this\IfNoValueF{#1}{: #1}}\xspace}
\DeclareDocumentCommand\expand{g}{\stepcounter{todocounter}\todo[color = green!50]{\thetodocounter. Expand\IfNoValueF{#1}{: #1}}\xspace}
\newtheorem{theorem}{Theorem}
\newtheorem{corollary}[theorem]{Corollary}
\newtheorem{lemma}[theorem]{Lemma}
\newtheorem{proposition}[theorem]{Proposition}
\declaretheoremstyle[
  spaceabove = 3pt,
  spacebelow = 3pt,
]{lecture}
\theoremstyle{lecture}
\newtheorem{definition}[theorem]{Definition}
\newtheorem{example}[theorem]{Example}
\newtheorem{remark}[theorem]{Remark}
\newtheorem{setup}[theorem]{Setup}
\def\gitfootnote{\gdef\@thefnmark{}\@footnotetext}
\mathchardef\mhyphen="2D
\newcommand\dash{\nobreakdash-\hspace{0pt}}
\renewcommand\coloneqq\colonequals
\newcommand{\sslash}{\mathbin{/\mkern-6mu/}}
\DeclareMathOperator\Aut{Aut}
\DeclareMathOperator\Bl{Bl}
\DeclareMathOperator\derived{\mathbf{D}}
\DeclareMathOperator\Ext{Ext}
\DeclareMathOperator\Gr{Gr}
\DeclareMathOperator\HH{H}
\DeclareMathOperator\HHHH{HH}
\DeclareMathOperator\Hom{Hom}
\DeclareMathOperator\Ind{Ind}
\DeclareMathOperator\rk{rk}
\DeclareMathOperator\Spec{Spec}
\DeclareMathOperator\Sym{Sym}
\newcommand\bounded{\ensuremath{\mathrm{b}}}
\newcommand\LLL{\ensuremath{\mathbf{L}}}
\newcommand\RRR{\ensuremath{\mathbf{R}}}
\newcommand\subspace{\ensuremath{\mathrm{S}}}
\newcommand\sym{\ensuremath{\mathfrak{S}}}
\newcommand\tangent{\ensuremath{\mathrm{T}}}
\newcommand\sing{\ensuremath{\mathrm{sing}}}
\newcommand\coble[1][]{\ensuremath{\vphantom{\mathsf{Cob}}\smash{\mathsf{Cob}^{#1}}}}
\newcommand\igusa[1][]{\ensuremath{\vphantom{\mathsf{CR}_4}\smash{\mathsf{CR}_4^{#1}}}}
\newcommand\segre[1][]{\ensuremath{\vphantom{\mathsf{S}_3}\smash{\mathsf{S}_3^{#1}}}}
\title{Homological projective duality for the Segre~cubic}
\author{Thorsten Beckmann, Pieter Belmans}
\begin{document}
\maketitle


\begin{abstract}
  The Segre cubic and Castelnuovo--Richmond quartic are two projectively dual hypersurfaces in $\mathbb{P}^4$,
  with a long and rich history starting in the 19th century.
  We will explain how Kuznetsov's theory of homological projective duality lifts this projective duality
  to a relationship between the derived category of a small resolution of the Segre cubic and a small resolution of the Coble fourfold,
  the double cover of $\mathbb{P}^4$ ramified along the Castelnuovo--Richmond quartic.

  Homological projective duality then provides a description of the derived categories of linear sections,
  which we will describe to illustrate the theory.
  The case of the Segre cubic and Coble fourfold is non-trivial enough to exhibit interesting behavior,
  whilst being easy enough to explain the general machinery in this special and very classical case.
\end{abstract}

\tableofcontents

\section{Introduction}
Projective duality has been a cornerstone of (algebraic) geometry since a long time,
going back to the 19th century and before.
It provides a way to recover a projective variety from its set of tangent hyperplanes in the dual projective space.
For a modern treatment one is referred to \cite{MR2113135,MR1264417,MR1234494}.

An interesting example is provided by
\begin{enumerate}
  \item[1.] the \emph{Segre cubic} $\segre$,
\end{enumerate}
the up to projective equivalence unique singular cubic threefold with the maximal number of 10~nodes,
studied by Segre in 1887 \cite{segre-cubic}.
An explicit description is recalled in \eqref{equation:segre-cubic-P5}.
Its projective dual is
\begin{enumerate}
  \item[2.] the \emph{Castelnuovo--Richmond quartic} $\igusa$,
\end{enumerate}
an explicit singular quartic threefold,
whose equations are given in \eqref{equation:igusa-quartic-in-P5}.
It was studied by Castelnuovo in 1891 \cite{castelnuovo-quartic}
and independently by Richmond in 1902 \cite{richmond-quartic}.
This threefold is also called \textit{Igusa quartic}\footnote{This might be the more common name nowadays,
  but Dolgachev writes the following in \cite[\S10.3]{MR2964027}:
  ``The quartic hypersurface isomorphic to $\igusa$ is often referred to in modern literature as an \emph{Igusa quartic}
  (apparently, reference \cite{MR1007155} is responsible for this unfortunate terminology).''
  Therefore we will write Castelnuovo--Richmond quartic.},
because its modular properties were studied by Igusa in 1962 \cite{MR0141643},
The third main character is
\begin{enumerate}
  \item[3.] the \emph{Coble fourfold} $\coble$,
\end{enumerate}
the double cover $\pi\colon\coble\to\mathbb{P}^4$ ramified along $\igusa$,
thoroughly studied by Coble in his 1929 book \cite{coble-book}.
We will discuss some of the many interesting geometric and modular properties of $\segre$, $\igusa$ and $\coble$ in \cref{section:segre-igusa-coble}.

Recently Kuznetsov has introduced a homological version of projective duality \cite{MR2354207}.
The original motivation is to study derived categories of linear sections,
and it has blossomed into a rich theory of ``homological projective geometry'' \cite{MR3948688,1902.09824v2,MR4280866,MR3948688}
where classical constructions in projective geometry (such as cones and joins)
have an analogous construction on the level of derived categories.
For an introduction to the theory one is referred to \cite{MR3728631,MR3821163,2111.00527v1}.

The theory roughly states that, for a morphism
$f\colon X\to\mathbb{P}(V)$
where $X$ is a smooth projective variety 
the derived category of a linear section $X\times_{\mathbb{P}(V)}\mathbb{P}(L)$ for some $L\subseteq V$
is described in terms of a \emph{homological projective dual} variety $f^\natural\colon X^\natural\to\mathbb{P}(V^\vee)$ (possibly noncommutative)
and its dual linear sections.
To make this possible
we consider a semiorthogonal decomposition of~$\derived^\bounded(X)$
which is compatible with~$\mathcal{O}_X(1)=f^*\mathcal{O}_{\mathbb{P}(V)}(1)$ in a suitable way,
a so-called \emph{Lefschetz decomposition}.
The general theory is developed in the geometric setting in \cite{MR2354207},
and in complete generality without geometricity conditions in \cite{MR3948688}.
We will briefly introduce the framework in \cref{subsection:lefschetz-and-hpd}.

The main result of the theory is the existence of a homological projective dual
together with a recipe to understand derived categories of hyperplane sections.
The name refers to the fact that,
if $X$ is in fact a closed subvariety of $\mathbb{P}(V)$,
then the classical projective dual arises as the critical locus of the dual morphism $f^\natural$.
However, whilst the theory provides an abstract existence result
it does not give an explicit geometric description of the dual in concrete situations
(much like an explicit description of the classical projective dual is usually hard to come by),
which is necessary to interpret the output of the machinery.

Instances in which the homological projective dual has a nice geometric description are listed in \cite{MR3728631},
and include cases like quadrics,
Grassmannians of lines (the Grassmannian--Pfaffian duality, which is still incomplete),
or Veronese embeddings (the Veronese--Clifford duality).

The goal of this (mostly expository) paper is to give a case study of homological projective duality
for the classically relevant case of the Segre cubic~$\segre$.
It has the benefit that the linear sections appearing are all well-known varieties,
such as (smooth and singular) cubic surfaces,
Kummer quartics,
elliptic curves as plane cubics and double covers.
Because all the objects are commutative
(in particular, no need for noncommutative or categorical resolutions),
reasonably small, completely classical,
and their derived categories are easy to describe
the workings and the output of the abstract machinery becomes tractable.

\paragraph{The main result}
The version of homological projective duality from \cite{MR2354207} works for a morphism~$X\to\mathbb{P}(V)$
where~$X$ is smooth projective.
Because the Segre cubic is singular,
we will recall in \cref{subsection:resolutions} a certain small resolution~$\varpi\colon X\to\segre$
together with a natural map~$f\colon X\to\mathbb{P}(V)$
which is the composition of the resolution and the closed immersion.

For its homological projective dual we will consider
a small resolution of singularities~$\rho\colon Y\to\coble$,
which induces a natural map~$g\colon Y\to\mathbb{P}(V^\vee)$.
The short version of the main result is then:
\begin{theorem}
  \label{theorem:main-theorem-imprecise}
  Let~$f\colon X\to\mathbb{P}(V)$ be the composition of
  the small resolution~$\varpi$
  and the inclusion of the Segre cubic~$\segre$.
  Consider the Lefschetz structure from \eqref{equation:lefschetz-for-segre-cubic}.

  The homological projective dual is~$g\colon Y\to\mathbb{P}(V^\vee)$,
  where~$g$ is the composition of
  the small resolution~$\rho$
  and the double cover ramified along the Castelnuovo--Richmond quartic.
\end{theorem}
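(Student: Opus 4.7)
The plan is to follow the standard geometric HPD pattern from \cite{MR2354207}. Starting from the Lefschetz decomposition of $\derived^\bounded(X)$ recorded in \eqref{equation:lefschetz-for-segre-cubic}, I would form the universal hyperplane section $\mathcal{H}(X) = X \times_{\mathbb{P}(V)} \mathcal{Q}$, where $\mathcal{Q} \subset \mathbb{P}(V) \times \mathbb{P}(V^\vee)$ is the incidence correspondence, and push the Lefschetz decomposition through the embedding $\mathcal{H}(X) \hookrightarrow X \times \mathbb{P}(V^\vee)$ to produce a relative semiorthogonal decomposition of $\derived^\bounded(\mathcal{H}(X))$ over $\mathbb{P}(V^\vee)$. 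The residual component in this decomposition is the \emph{HPD category}; Kuznetsov's main theorem guarantees its existence, and the whole task reduces to identifying it with $\derived^\bounded(Y)$ in a way compatible with $g$.

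To do so one has to exhibit an explicit Fourier--Mukai kernel $\mathcal{E}$ on $Y \times_{\mathbb{P}(V^\vee)} \mathcal{H}(X)$ giving a fully faithful functor $\derived^\bounded(Y) \to \derived^\bounded(\mathcal{H}(X))$ landing inside the residual component. A natural candidate is built from the structure sheaf of that fibre product, possibly twisted by a line bundle pulled back from the branch datum of $\pi\colon \coble \to \mathbb{P}(V^\vee)$. The choice is forced by the fibrewise picture: over a generic hyperplane $H \subset \mathbb{P}(V)$ not meeting $\igusa$, the section of $\segre$ by $H$ is a smooth cubic surface and $\coble$ has two sheets over the corresponding point of $\mathbb{P}(V^\vee)$, each contributing an exceptional object to the derived category of $X \cap H$ beyond the restricted Lefschetz components, in perfect agreement with the HPD numerology.

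Full faithfulness of this functor would then be checked fibre by fibre over $\mathbb{P}(V^\vee)$ using base change, reducing the question to derived equivalences between the various degenerate linear sections of $X$ (smooth and nodal cubic surfaces, Cayley symmetroids, Kummer-type quartics arising along $\igusa$) and the matching fibres of $Y$. The hard part is the behavior over the deep strata of $\igusa$ and over the tangent hyperplanes at the 10~nodes of $\segre$: here both $X \cap H$ and the corresponding fibre of $Y$ acquire extra singularities, and it is precisely the exceptional loci of the two small resolutions $\varpi$ and $\rho$ that must conspire to repair the equivalence. The core technical obstacle is therefore a careful bookkeeping matching the contracted $\mathbb{P}^1$'s on $X$ with those on $Y$ across the discriminant stratification, verifying that they produce complementary summands so that the abstract HPD category is hit exactly by $\derived^\bounded(Y)$, with no further correction by a sheaf of algebras or a categorical resolution required. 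Once fully faithfulness and essential surjectivity onto the residual component are both established, the theorem follows.
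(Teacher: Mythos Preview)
Your proposal is not a proof but an outline of a strategy, and the strategy misses the structural insight that makes the theorem a near-triviality. The paper does \emph{not} build a Fourier--Mukai kernel by hand and then verify full faithfulness stratum by stratum over the discriminant; instead it observes that both sides are projective bundles over the \emph{same} base. Concretely, the small resolution $X$ is $\mathbb{P}_S(\mathcal{U}_2)$ and the small resolution $Y$ is $\mathbb{P}_S(\mathcal{U}_3)$, where $S$ is the quintic del Pezzo surface and $\mathcal{U}_2,\mathcal{U}_3$ are the restrictions of the tautological subbundles from $\Gr(2,W^\vee)\cong\Gr(3,W)$. Under that Grassmannian duality one has $\mathcal{U}_2^\perp\cong\mathcal{U}_3$ in the sense of \eqref{equation:E-perp}, so the Lefschetz decomposition \eqref{equation:lefschetz-for-segre-cubic} is exactly Orlov's projective bundle decomposition, and Kuznetsov's HPD for projective bundles (\cref{corollary:absolute-version}, i.e.\ \cite[Corollary~8.3]{MR2354207}) applies verbatim. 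The entire proof is this identification of $\mathcal{U}_2^\perp$ with $\mathcal{U}_3$, which is immediate from the universal sequences on the two Grassmannians.

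Your route, by contrast, treats $X$ and $Y$ as black boxes linked only through $\segre$ and $\coble$, and then tries to reverse-engineer the HPD equivalence from the behaviour of hyperplane sections. Even as a strategy this has real gaps: you never specify the kernel beyond ``possibly twisted structure sheaf'', you defer the delicate loci (singular lines of $\igusa$, tangent hyperplanes at the nodes, reducible sections containing Segre planes) to an unspecified ``bookkeeping'', and fibrewise full faithfulness does not by itself imply global full faithfulness without additional flatness and base-change arguments that you do not supply. In the paper the stratification analysis you sketch is carried out \emph{after} the theorem, as an illustration of what HPD says about linear sections, not as the mechanism of proof. The point to take away is that recognising $X$ and $Y$ as orthogonal projective bundles over a common base collapses the problem to a one-line citation; absent that recognition you are essentially proposing to redo the general HPD machinery by hand in this example.
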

In \cref{theorem:main-theorem-precise} we will give the precise version.
The proof of this theorem is given in \cref{subsection:segre-hpd}.
It is an application of homological projective duality for projective bundles \cite[\S8]{MR2354207}
which we will recall in \cref{subsection:hpd-bundles}.

More interesting than the theorem itself
are the applications of the machinery of homological projective duality.
Linear sections of~$X$ and~$Y$ are easy to describe
and often have a very classical interpretation.
The description of their derived categories is performed in \cref{section:linear-sections}.

\paragraph{Comparison with homological projective duality for quadrics}
Arguably the easiest non-trivial case of homological projective duality is that of quadrics \cite{MR4283549}.
On the other hand,
the homological projective dual of a smooth cubic threefold is very complicated,
as explained in \cref{subsection:other-cubics},
and the classical projective dual is highly singular hypersurface of degree~24 (by \eqref{equation:plucker-teissier}).

However, the many singularities of the Segre cubic make its dual hypersurface tractable.
This, together with the existence of a rectangular Lefschetz decomposition for a small resolution,
and an understanding of the geometry in very classical terms,
make it possible to prove \cref{theorem:main-theorem-imprecise},
with a statement analogous to that of odd-dimensional quadrics,
except that it involves singular hypersurfaces of degrees~3 and~4,
and double covers thereof.
We will recall homological projective duality for quadrics in \cref{example:hpd-quadrics},
and the reader is invited to compare this description to the statement of \cref{theorem:main-theorem-precise}.

\paragraph{A second Lefschetz decomposition}
The input for homological projective duality is not just the morphism~$f\colon X\to\mathbb{P}(V)$,
one also needs to specify a Lefschetz structure on~$\derived^\bounded(X)$.
In the context of homogeneous varieties this leads to the search for \emph{minimal} Lefschetz centers \cite{MR3137200,MR4017162},
which are in some sense optimal:
their homological projective duals will be as small as possible.
Many examples of homological projective duality are considered for minimal Lefschetz centers.

The Lefschetz decomposition in \cref{theorem:main-theorem-imprecise} is rectangular,
so the Lefschetz center is in particular minimal, so optimal from the point-of-view of homological projective duality.

But we can construct a \emph{second} rectangular Lefschetz decomposition.
We will first do this by using a different small resolution of~$\segre$ in \cref{proposition:blowup-lefschetz-structure}.
Then in \cref{proposition:quiver-lefschetz-structure} we will construct another rectangular Lefschetz decomposition
using a modular interpretation of~$\segre$ and its resolutions in terms of quiver representations.
We will compare the Lefschetz centers in \cref{proposition:michel},
and leave a description of the homological projective dual for further work.

\paragraph{Conventions}
Throughout~$k$ will be an algebraically closed field of characteristic~0.

\paragraph{Acknowledgements}
We would like to thank Sasha Kuznetsov for many interesting discussions,
and suggesting this particularly nice approach to homological projective duality of the Segre cubic.
This expository writeup can also be seen as a (very late) offshoot of the online learning seminar on homological projective duality,
organised by Daniel Huybrechts in the spring of 2020 at the University of Bonn,
and we would like to thank him for setting this up in very trying times.
The first author is grateful for stimulating discussions about projective geometry with Gebhard Martin,
and we would like to thank Michel Van den Bergh for the nice proof of \cref{proposition:michel}.

The first author is funded by the IMPRS program of the Max--Planck Society.

\section{Homological projective duality}
\label{section:hpd}
We will now give a brief introduction to homological projective duality,
and discuss the case of projective bundles.
For more information the reader is referred to \cite{MR3728631,MR3821163},
or the original \cite{MR2354207}.
In our presentation we will include the noncommutative picture from \cite{MR3948688}.

\subsection{Lefschetz categories and homological projective duality}
\label{subsection:lefschetz-and-hpd}
Let~$X$ be a smooth and proper variety over~$k$,
let~$V$ be a finite-dimensional vector space,
and let~$f\colon X\to\mathbb{P}(V)$ be a morphism.
We set~$\mathcal{O}_X(1)=f^*\mathcal{O}_{\mathbb{P}(V)}(1)$.

A \emph{Lefschetz center} of~$\derived^\bounded(X)$ is an admissible subcategory~$\mathcal{A}_0$ of~$\derived^\bounded(X)$,
for which
\begin{itemize}
  \item there exist admissible subcategories~$\mathcal{A}_1,\ldots,\mathcal{A}_{m-1}$ of~$\derived^\bounded(X)$,
  \item which fit into a chain of admissible subcategories
\end{itemize}
\begin{equation}
  \label{equation:inclusion-chain}
  \begin{gathered}
    0\subseteq\mathcal{A}_{m-1}\subseteq\ldots\subseteq\mathcal{A}_{1}\subseteq\mathcal{A}_0
  \end{gathered}
\end{equation}
such that there exists a semiorthogonal decomposition
\begin{equation}
  \label{equation:lefschetz-sods}
  \derived^\bounded(X)=\langle\mathcal{A}_0,\mathcal{A}_1(1),\ldots,\mathcal{A}_{m-2}(m-2),\mathcal{A}_{m-1}(m-1)\rangle.
\end{equation}
Here we write~$\mathcal{A}_i(i)$ for the image of~$\mathcal{A}_i$ under the tensor product with~$\mathcal{O}_X(i)$.
The~$\mathcal{A}_i$ are the \emph{Lefschetz components},
and the semiorthogonal decomposition in \eqref{equation:lefschetz-sods} is a \emph{Lefschetz decomposition} of~$\derived^\bounded(X)$.
One can moreover show that the choice of~$\mathcal{A}_0$ determines all the components~$\mathcal{A}_i\subseteq\mathcal{A}_0$.

The main motivation for this definition is that,
if one takes a hyperplane~$H\subseteq\mathbb{P}(V)$
such that the ``hyperplane section'' $X\times_{\mathbb{P}(V)}H$
has dimension\footnote{Without this condition one needs to take the derived fiber product. We will discuss derived fiber products in \cref{section:linear-sections}.}
$\dim X-1$,
one gets an induced semiorthogonal decomposition
\begin{equation}
  \label{equation:hyperplane-sod}
  \derived^\bounded(X\times_{\mathbb{P}(V)}H)=\langle\mathcal{C}_H,\mathcal{A}_1(1),\mathcal{A}_2(2),\ldots,\mathcal{A}_{m-2}(m-2),\mathcal{A}_{m-1}(m-1)\rangle
\end{equation}
where one can restrict the subcategories~$\mathcal{A}_i$ to admissible subcategories for the hyperplane section,
and preserve their semiorthogonality as long as one discards~$\mathcal{A}_0$.
The category~$\mathcal{C}_H$ is by definition the right orthogonal to the restricted decomposition.

\paragraph{The homological projective dual}
The main construction of homological projective duality is a family of categories over~$\mathbb{P}(V^\vee)$,
the dual projective space whose points correspond to the hyperplanes~$H$,
whose fibers are the categories~$\mathcal{C}_H$.
To make this construction,
we consider the universal hyperplane section~$\mathbf{H}\subset\mathbb{P}(V)\times\mathbb{P}(V^\vee)$
and set~$\mathbf{H}(X)\colonequals X\times_{\mathbb{P}(V)}\mathbf{H}$.
One can show that there exists a $\mathbb{P}(V^\vee)$-linear\footnote{To be taken in the sense of \cite[\S2.6]{MR2238172}, i.e.~it commutes with tensor products of pullbacks along~$\mathbf{H}(X)\to\mathbb{P}(V^\vee)$ of objects in~$\derived^\bounded(\mathbb{P}(V^\vee)$.}
semiorthogonal decomposition
\begin{equation}
  \derived^\bounded(\mathbf{H}(X))
  =
  \langle
    \derived^\bounded(X)^\natural,
    \mathcal{A}_1(1)\boxtimes\derived^\bounded(\mathbb{P}(V^\vee)),
    \ldots,
    \mathcal{A}_{m-1}(m-1)\boxtimes\derived^\bounded(\mathbb{P}(V^\vee))
  \rangle.
\end{equation}
Here~$\derived^\bounded(X)^\natural$ is defined as the right orthogonal to the ``standard'' components,
and we have surpressed some of the notation for the embedding functors.
The category~$\derived^\bounded(X)^\natural$ is~$\mathbb{P}(V^\vee)$-linear,
and one can construct a canonical Lefschetz structure on it,
whose Lefschetz center~$\mathcal{B}_0$ is equivalent to the Lefschetz center~$\mathcal{A}_0$.

The category~$\derived^\bounded(X)^\natural$ is not necessarily the derived category of a variety.
But if it is we make the following definition.
\begin{definition}
  We say that the \emph{homological projective dual} to~$f\colon X\to\mathbb{P}(V)$ equipped with a Lefschetz decomposition
  is a morphism~$f^\natural\colon X^\natural\to\mathbb{P}(V)$ such that
  \begin{itemize}
    \item $\derived^\bounded(X^\natural)$ is equipped with a Lefschetz decomposition,
    \item there exists a~$\mathbb{P}(V^\vee)$-linear Fourier--Mukai functor from~$\derived^\bounded(X^\natural)$ to
      $\derived^\bounded(\mathbf{H}(X))$,
      i.e.~the kernel needs to live in the derived category of
      the fiber product~$(X\times Y)\times_{\mathbb{P}(V)\times\mathbb{P}(V^\vee)}Q$
      where~$Q$ is the incidence variety,
  \end{itemize}
  which induces a Lefschetz equivalence~$\derived^\bounded(X^\natural)\cong\derived^\bounded(X)^\natural$,
  i.e.~an equivalence which also gives an equivalence of Lefschetz centers,
  between the canonical Lefschetz center from the construction and the Lefschetz center from the first point.
\end{definition}

For the theory to work
it is not actually necessary that~$f^\natural\colon X^\natural\to\mathbb{P}(V)$
is really a morphism from a (smooth proper) variety~$X^\natural$ to the dual projective space.
This will rarely be the case in fact.
Rather it suffices to treat~$\derived^\bounded(X)^\natural$ as an enhanced triangulated category,
which is linear over~$\mathbb{P}(V^\vee)$.
One can then just formally consider this as the homological projective dual.
The general machinery of \cite{MR3948688} makes this possible,
and proves the existence of a homological projective dual without any geometricity conditions.

We can now discuss the two main parts of the main theorem of homological projective duality \cite[Theorem~6.3]{MR2354207}.

\paragraph{First part of the main theorem: Dual Lefschetz decompositions}
Homological projective duality is indeed a duality,
so its output is again a derived category with a Lefschetz structure
to which the machinery can be applied again.

The Lefschetz structure on~$\derived^\bounded(X)^\natural$ is the \emph{dual Lefschetz decomposition}\footnote{We will not go into the details of left and right homological projective duality: by \cite[\S7.4]{MR3948688} these notions agree because we are working with smooth and proper varieties.}
\begin{equation}
  \derived^\bounded(X)^\natural=
  \langle\mathcal{B}_{n-1}(1-n),\ldots,\mathcal{B}_1(-1),\mathcal{B}_0\rangle
\end{equation}
for some Lefschetz center~$\mathcal{B}_0$ which is equivalent to~$\mathcal{A}_0$.
The length~$n$ is given in \cite[equation~(11)]{MR2354207} and reads
\begin{equation}
  n\colonequals\dim V-1-\max\{i\mid\mathcal{A}_i=\mathcal{A}_0\}.
\end{equation}
The relationship between the~$\mathcal{A}_i$ and~$\mathcal{B}_j$ can be described as follows.
The chain of inclusions \eqref{equation:inclusion-chain} allows one to obtain semiorthogonal decompositions
\begin{equation}
  \mathcal{A}_i=\langle\mathfrak{a}_i,\mathcal{A}_{i+1}\rangle=\ldots=\langle\mathfrak{a}_i,\ldots,\mathfrak{a}_{m-1}\rangle
\end{equation}
and the categories~$\mathfrak{a}_i$ are the \emph{primitive categories} of the Lefschetz decomposition.
The categories~$\mathcal{B}_j$ are built using the same primitive categories,
namely there exist semiorthogonal decompositions
\begin{equation}
  \mathcal{B}_j=\langle\mathfrak{a}_0,\ldots,\mathfrak{a}_{\dim V-j-2}\rangle.
\end{equation}

In all examples mentioned in this paper,
except \cref{example:hpd-quadrics},
the Lefschetz decomposition is in fact \emph{rectangular}: $\mathcal{A}_0=\ldots=\mathcal{A}_m$.
This means that there is but one non-zero primitive subcategory,
$\mathfrak{a}_0=\ldots=\mathfrak{a}_{m-2}=0$
and~$\mathfrak{a}_{m-1}=\mathcal{A}_0$.
We have that~$n=\dim V-n$,
and the dual Lefschetz decomposition is also rectangular.

\paragraph{Second part of the main theorem: Linear sections}
The main application of homological projective duality is to provide a description of the derived category of a linear section of~$X$,
in terms of the Lefschetz decomposition \emph{and} the derived category of the dual linear section of~$X^\natural$.
We will from now on write~$Y=X^\natural$.

For this we consider a linear subspace~$L\subseteq V$,
so that we can consider~$L^\perp\subseteq V^\vee$, and define
\begin{equation}
  \label{equation:X-L-Y-L-perp}
  X_L\colonequals X\times_{\mathbb{P}(V)}\mathbb{P}(L),\qquad Y_{L^\perp}\colonequals Y\times_{\mathbb{P}(V^\vee)}\mathbb{P}(L^\perp).
\end{equation}
For now we will assume that~$L\subseteq V$ is \emph{admissible},
i.e.~that~$X_L$ and~$Y_{L^\perp}$ have their expected dimension, so
\begin{equation}
  \dim X_L=\dim X-\dim_k L^\perp,\qquad\dim Y_{L^\perp}=\dim Y-\dim_k L.
\end{equation}
We comment in \cref{remark:derived-fiber-product} on what to do in general.

With this setup,
the second part of the main theorem of homological projective duality states that
the derived categories of~$X_L$ and~$Y_{L^\perp}$ have induced semiorthogonal decompositions
\begin{equation}
  \label{equation:linear-sections-sod}
  \begin{aligned}
    \derived^\bounded(X_L)&=
    \langle
      \mathcal{C}_L,
      \mathcal{A}_{\dim_k L^\perp}(1),
      \ldots,
      \mathcal{A}_{m-1}(m-\dim_k L^\perp)
    \rangle, \\
    \derived^\bounded(Y_{L^\perp})&=
    \langle
      \mathcal{B}_{n-1}(\dim_k L-n),
      \ldots,
      \mathcal{B}_{\dim_k L}(-1),
      \mathcal{C}_L
      \rangle.
  \end{aligned}
\end{equation}
The important observation is that they have the component~$\mathcal{C}_L$ in common.
So understanding a semiorthogonal decomposition for one variety
gives information on the semiorthogonal decomposition of a possibly very different variety,
of ``complementary dimension''.
This of course requires a decent enough understanding of the homological projective dual variety,
and a way to understand~$\mathcal{C}_L$.

\begin{remark}
  \label{remark:derived-fiber-product}
  The admissibility condition that~$X_L$ and~$Y_{L^\perp}$ from \eqref{equation:X-L-Y-L-perp} are of expected dimension
  can be removed using methods from derived algebraic geometry,
  provided that one takes the \emph{derived fiber products}
  \begin{equation}
    \label{equation:derived-fiber-product}
    X_L\colonequals X\times_{\mathbb{P}(V)}^\LLL\mathbb{P}(L),\qquad
    Y_{L^\perp}\colonequals Y\times_{\mathbb{P}(V^\vee)}^\LLL\mathbb{P}(L^\perp).
  \end{equation}
  This is worked out in \cite{MR3948688}.
  From now on~$X_L$ and~$Y_{L^\perp}$ refer to these derived fiber products.

  To explicitly compute the derived fiber product~$X_L$
  we take a locally free resolution of~$\mathcal{O}_{\mathbb{P}(V)}$-modules
  of either~$\mathcal{O}_X$ or~$\mathcal{O}_{\mathbb{P}(L)}$
  which has the structure of a dg algebra of,
  and take the tensor product with of dg~algebras
  instead of the tensor product of algebras
  in the formation of the fiber product.
  In this way we obtain a derived scheme structure on the ordinary fiber product.
  We will discuss how to do this in practice in \cref{section:linear-sections}.
\end{remark}

Let us now illustrate these results in two examples,
which also explain how to build intuition for the structure of the homological projective dual.

\begin{example}
  \label{example:linear-hpd}
  The easiest example of a Lefschetz decomposition is provided by Beilinson's exceptional collection \cite{MR0509388}
  \begin{equation}
    \derived^\bounded(\mathbb{P}(V))=\langle\mathcal{O}_{\mathbb{P}(V)},\mathcal{O}_{\mathbb{P}(V)}(1),\ldots,\mathcal{O}_{\mathbb{P}(V)}(\dim V-1)\rangle
  \end{equation}
  where one takes~$f\colon\mathbb{P}(V)\to\mathbb{P}(V)$ the identity morphism,
  and takes as Lefschetz center
  \begin{equation}
    \label{equation:linear-duality-lefschetz-center}
    \mathcal{A}_0\colonequals\langle\mathcal{O}_{\mathbb{P}(V)}\rangle.
  \end{equation}
  Because a hyperplane section (and linear section) is always just a lower-dimensional projective space,
  for which the restricted decomposition is again a full exceptional collection,
  all the categories~$\mathcal{C}_H$ (and~$\mathcal{C}_L$) are zero,
  because the part in the semiorthogonal decomposition \eqref{equation:hyperplane-sod} (and \eqref{equation:linear-sections-sod})
  determined by the~$\mathcal{A}_i$ already generates the derived category of the section.
  In fact, the homological projective dual of projective space
  (with respect to this choice of Lefschetz center!)
  is just the zero category.

  More interesting to consider is an inclusion of vector spaces~$W\subset V$
  and the resulting morphism~$f\colon\mathbb{P}(W)\hookrightarrow\mathbb{P}(V)$,
  using Beilinson's collection for~$\mathbb{P}(W)$
  and Lefschetz center \eqref{equation:linear-duality-lefschetz-center}.
  Reasoning as in the case of~$W=V$ considered before,
  one can see that the homological projective dual
  is supported on~$\mathbb{P}(W^\perp)\subset\mathbb{P}(V^\vee)$:
  for a point~$h$ outside~$\mathbb{P}(W^\perp)$
  corresponding to a hyperplane~$H\subset\mathbb{P}(V)$
  the intersection~$H\cap\mathbb{P}(W)$ has the expected dimension
  and the restricted collection is again a full exceptional collection.
  One can in fact show that the homological projective dual
  \emph{is} the inclusion~$\mathbb{P}(W^\perp)\subset\mathbb{P}(V^\vee)$ \cite[\S4.1]{MR3728631}.
\end{example}

Whilst especially the first example looks quite pathological
and the generalisation isn't too complicated either,
it turns out that their relative version
(i.e.~for projective bundles)
is exactly the ingredient we need for homological projective duality for the Segre cubic,
and also a convenient way to prove homological projective duality for~$\mathbb{P}(W)\subset\mathbb{P}(V)$.
We will discuss this relative version in detail in \cref{subsection:hpd-bundles}.

\begin{remark}
  We also emphasise that in \cref{example:linear-hpd} we consider a Lefschetz decomposition with respect to~$\mathcal{O}_{\mathbb{P}(V)}(1)$.
  If one instead considers the second Veronese embedding~$\mathbb{P}(V)\hookrightarrow\mathbb{P}(\Sym^2V)$,
  it is possible to consider Lefschetz structures with respect to~$\mathcal{O}_{\mathbb{P}(V)}(2)$.
  This is done in \cite{MR2419925},
  and gives rise to a description of the derived categories of intersections of quadrics,
  as these correspond to linear sections with respect to this choice of line bundle.
\end{remark}

Instead of describing homological projective duality for the Veronese embedding in more detail
we describe homological projective duality for quadrics,
as there are some interesting parallels between this situation and that of the Segre cubic.

\begin{example}
  \label{example:hpd-quadrics}
  Let~$Q\subseteq\mathbb{P}(V)$ be a smooth quadric hypersurface.
  On an odd-dimensional quadric~$Q$ there exists a unique \emph{spinor bundle}~$\mathcal{S}$,
  on an even-dimensional quadric~$Q$ there exist two \emph{spinor bundles}~$\mathcal{S}_+,\mathcal{S}_-$
  which are related by the isomorphism~$\mathcal{S}_{\pm}\otimes\mathcal{O}_Q(1)\cong\mathcal{S}_{\pm}^\vee$ or~$\mathcal{S}_{\mp}^\vee$ depending on the parity of~$\dim Q/2$.
  By mutating Kapranov's decomposition from \cite{MR0939472}
  these bundles give rise to the Lefschetz decompositions
  \begin{equation}
    \label{equation:lefschetz-quadric}
    \derived^\bounded(Q)=
    \begin{cases}
      \langle\mathcal{S},\mathcal{O}_Q,\mathcal{O}_Q(1),\ldots,\mathcal{O}_Q(\dim Q-1)\rangle & \text{$\dim Q$ odd}, \\
      \langle\mathcal{S}_+,\mathcal{O}_Q,\mathcal{S}_+(1),\mathcal{O}_Q(1),\ldots,\mathcal{O}_Q(\dim Q-1)\rangle & \text{$\dim Q$ even}.
    \end{cases}
  \end{equation}
  In both cases the Lefschetz center~$\mathcal{A}_0$ is generated by two objects,
  namely~$\mathcal{A}_0=\langle\mathcal{S},\mathcal{O}_Q\rangle$ resp.~$\langle\mathcal{S}_+,\mathcal{O}_Q\rangle$.
  In the odd-dimensional case we have that~$\mathcal{A}_1=\ldots=\mathcal{A}_{\dim Q-1}=\langle\mathcal{O}_Q\rangle$,
  whilst in the even-dimensional case we have that~$\mathcal{A}_0=\mathcal{A}_1$,
  and~$\mathcal{A}_2=\ldots=\mathcal{A}_{\dim Q-1}=\langle\mathcal{O}_Q\rangle$.

  To heuristically understand the homological projective dual,
  observe that a general hyperplane section~$Q\cap H$ is again a smooth quadric hypersurface.
  Here~$H=\mathbb{P}(L)$ for subspace~$L\subseteq V$ of codimension~1,
  so that~$\mathbb{P}(L^\perp)=\mathrm{pt}$.
  The answer depends on the parity of~$\dim Q\cap H$.

  If the hyperplane section is odd-dimensional,
  then the restriction from the Lefschetz collection on~$Q$
  provides the full exceptional collection in \eqref{equation:lefschetz-quadric}
  because the restriction of a spinor bundle is again a spinor bundle.
  This suggests that the homological projective dual is supported on the classical projective dual,
  because only for singular hyperplane sections a non-trivial contribution can exist.

  If the hyperplane section is even-dimensional,
  then the restriction from the Lefschetz collection on~$Q$
  provides all the twists of~$\mathcal{O}_{Q\cap H}$ in \eqref{equation:lefschetz-quadric},
  but not the two spinor bundles.
  These are orthogonal exceptional objects,
  so that we may interpret~$\mathcal{C}_H$ in \eqref{equation:hyperplane-sod}
  as corresponding to a double cover with discriminant the classical projective dual.

  This heuristic picture can be made fully rigorous,
  and this is indeed the main result of~\cite{MR4283549}.
  It is quite rare that the homological projective dual is supported on the classical projective dual,
  as this requires the full exceptional collection
  to restrict to a full exceptional collection for a smooth hyperplane section.
  Whilst it will turn out not to be the case for the Segre cubic,
  it is conjecturally true for the Cartan cubic in $\mathbb{P}^{26}$,
  see \cite[Conjecture~1.2]{cayley-plane}.
\end{example}

\subsection{Homological projective duality for projective bundles}
\label{subsection:hpd-bundles}
We will now introduce the version of homological projective duality we will need to study the Segre cubic.
To do so we will first recall the setup of \cite[\S8]{MR2354207},
which sets up a relative homological projective duality,
and then we will explain how this leads to the absolute version.

Let~$S$ be a smooth variety,
and~$\mathcal{E}$ a vector bundle of rank~$r$ on~$S$,
such that the dual~$\mathcal{E}^\vee$ is globally generated.
Let~$X=\mathbb{P}_S(\mathcal{E})$ be the projective bundle over~$S$
parameterising line subbundles of~$\mathcal{E}$,
with structure morphism
\begin{equation}
  p\colon X\to S,
\end{equation}
and let~$\mathcal{O}_p(-1)$ denote the tautological line subbundle of~$p^*\mathcal{E}$
such that~$p_*\mathcal{O}_p(1)=\mathcal{E}^\vee$.

We will also need a map to some projective space.
For this we take
\begin{equation}
  V\colonequals\HH^0(X,\mathcal{O}_p(1))^\vee=\HH^0(S,\mathcal{E}^\vee)^\vee,
\end{equation}
so that we can consider the morphism
\begin{equation}
  \label{equation:hpd-bundles-morphism}
  f\colon X\to\mathbb{P}(V).
\end{equation}

Orlov's projective bundle formula gives a $\derived^\bounded(S)$-linear Lefschetz decomposition
\begin{equation}
  \label{equation:lefschetz-decomposition-bundle}
  \derived^\bounded(X)=
  \langle
    \mathcal{A}_0,
    \mathcal{A}_1(1),
    \ldots,
    \mathcal{A}_{\rk\mathcal{E}-1}(\rk\mathcal{E}-1)
  \rangle
\end{equation}
with respect to~$\mathcal{O}_X(1)\colonequals\mathcal{O}_p(1)$,
such that the Lefschetz center is~$\mathcal{A}_0=p^*(\derived^\bounded(S))$.
We have
\begin{equation}
  \mathcal{A}_0=\ldots=\mathcal{A}_{\rk\mathcal{E}-1}=p^*(\derived^\bounded(S)),
\end{equation}
so this is a \emph{rectangular} Lefschetz decomposition.
Observe that~$\mathcal{O}_p(1)\cong f^*\mathcal{O}_{\mathbb{P}(V)}(1)$,
so that we also have a Lefschetz decomposition with respect to \eqref{equation:hpd-bundles-morphism}.

The (relative) homological projective dual to~$X=\mathbb{P}_S(\mathcal{E})$ is constructed using the orthogonal bundle,
i.e.~we set
\begin{equation}
  \label{equation:E-perp}
  \mathcal{E}^\perp\colonequals\ker(V^\vee\otimes\mathcal{O}_S\to\mathcal{E}^\vee)
\end{equation}
and consider~$Y\colonequals\mathbb{P}_S(\mathcal{E}^\perp)$,
with
\begin{equation}
  q\colon Y\to S.
\end{equation}
Equip the derived category with the (again rectangular) $\derived^\bounded(S)$-linear Lefschetz structure
\begin{equation}
  \label{equation:lefschetz-decomposition-orthogonal-bundle}
  \derived^\bounded(Y)=
  \langle
    \mathcal{B}_{\rk\mathcal{E}^\perp-1}(-\dim V+\rk\mathcal{E}+1),
    \ldots,
    \mathcal{B}_1(-1),
    \mathcal{B}_0
  \rangle
\end{equation}
with respect to~$\mathcal{O}_Y(1)\colonequals\mathcal{O}_q(1)$,
where~$\mathcal{B}_0=q^*(\derived^\bounded(S))$.

Again we will need a map to some projective space.
In the definition of~$Y$ we have $q_*\mathcal{O}_q(1)=\mathcal{E}^{\perp,\vee}$
where
\begin{equation}
  0\to\mathcal{E}\to V\otimes\mathcal{O}_S\to\mathcal{E}^{\perp,\vee}\to 0
\end{equation}
is the dual of the defining sequence \eqref{equation:E-perp}
so that~$\HH^0(Y,\mathcal{O}_q(1))=\HH^0(S,\mathcal{E}^{\perp,\vee})=V$,
and we get the morphism
\begin{equation}
  \label{equation:hpd-bundles-dual-morphism}
  g\colon Y\to\mathbb{P}(V^\vee).
\end{equation}
As before, we have that~$g^*\mathcal{O}_{\mathbb{P}(V^\vee)}(1)\cong\mathcal{O}_q(1)$
so \eqref{equation:lefschetz-decomposition-orthogonal-bundle} is also a Lefschetz decomposition
with respect to \eqref{equation:hpd-bundles-dual-morphism}
In \cite[Corollary~8.3]{MR2354207} homological projective duality for projective bundles
is then stated\footnote{There is a typo in the statement: in the notation of op.~cit.~one has to require that $E^*$ is generated by global sections.}
as follows.
\begin{theorem}[Relative version]
  \label{theorem:relative-version}
  Let~$S$ be a smooth variety,
  and~$\mathcal{E}$ a vector bundle of rank~$r$ on~$S$
  such that~$\mathcal{E}^\vee$ is globally generated.
  The \emph{relative} homological projective dual of~$p\colon X\to S$
  (where~$X\colonequals\mathbb{P}_S(\mathcal{E})$)
  with respect to the Lefschetz structure from \eqref{equation:lefschetz-decomposition-bundle},
  is~$q\colon Y\to S$
  (where~$Y\colonequals\mathbb{P}_S(\mathcal{E}^\perp)$)
  with respect to the Lefschetz structure from \eqref{equation:lefschetz-decomposition-orthogonal-bundle}.
\end{theorem}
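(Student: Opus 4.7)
The plan is to analyse the universal hyperplane section $\mathbf{H}(X)=X\times_{\mathbb{P}(V)}\mathbf{H}$ using Orlov's projective bundle formula and to extract $\derived^\bounded(Y)$ as the orthogonal complement to the external Lefschetz pieces. The starting observation is that $\mathbf{H}(X)\to X$ is itself a projective bundle: the fiber over $x\in X$ with $f(x)=[\ell]$ is the space $\mathbb{P}(\ell^\perp)$ of hyperplanes containing~$\ell$, so globally $\mathbf{H}(X)\cong\mathbb{P}_X(K)$ where
\begin{equation*}
  0\to K\to V^\vee\otimes\mathcal{O}_X\to\mathcal{O}_p(1)\to 0.
\end{equation*}
Pulling back the defining sequence of $\mathcal{E}^\perp$ along~$p$ exhibits $p^*\mathcal{E}^\perp\subseteq K$ as a subbundle with quotient $Q=\ker(p^*\mathcal{E}^\vee\to\mathcal{O}_p(1))$ of rank $r-1$, and thereby realises $X\times_S Y=\mathbb{P}_X(p^*\mathcal{E}^\perp)$ inside $\mathbf{H}(X)$ as the zero locus of a regular section of $\pi^*Q\otimes\mathcal{O}_{\mathbb{P}_X(K)}(1)$, where $\pi\colon\mathbb{P}_X(K)\to X$.

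Combining Orlov's formula for $\mathbf{H}(X)\to X$ with the rectangular Lefschetz decomposition~\eqref{equation:lefschetz-decomposition-bundle} then produces a semiorthogonal decomposition of $\derived^\bounded(\mathbf{H}(X))$ with $r(\dim V-1)$ blocks, each equivalent to $\derived^\bounded(S)$. The external Lefschetz components $\mathcal{A}_i(i)\boxtimes\derived^\bounded(\mathbb{P}(V^\vee))$ for $i\geq 1$, after expanding $\derived^\bounded(\mathbb{P}(V^\vee))$ by Beilinson's collection, contribute $(r-1)\dim V$ blocks. A sequence of mutations---using the tautological sequence for~$K$ to trade top twists $\mathcal{O}_{\mathbb{P}_X(K)}(\dim V-1)$ for lower twists of objects pulled back from~$X$---matches the external blocks with an appropriate subfamily of Orlov blocks, leaving exactly $\dim V-r=\rk\mathcal{E}^\perp$ blocks in the orthogonal complement. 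I would then exhibit the Fourier--Mukai functor with kernel $\mathcal{O}_{X\times_S Y}$ as the equivalence $\derived^\bounded(Y)\simeq\derived^\bounded(X)^\natural$, using the Koszul resolution of $\mathcal{O}_{X\times_S Y}$ on $\mathbf{H}(X)$ supplied by the regular section above to verify fully faithfulness and semiorthogonality against the Lefschetz components. The identity $\mathcal{O}_q(1)=g^*\mathcal{O}_{\mathbb{P}(V^\vee)}(1)$ makes the $\mathbb{P}(V^\vee)$-linearity automatic, and the manifest symmetry of the whole construction under swapping $(\mathcal{E},\mathcal{E}^\perp)$---an involution via the defining short exact sequence---correctly identifies the dual Lefschetz center as $\mathcal{B}_0=q^*\derived^\bounded(S)$.

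The main obstacle, in my view, is the combinatorial reshuffling in the middle step: the discrepancy $(r-1)\dim V\neq r(\dim V-1)$ between block counts forces one to relate $\mathcal{O}_{\mathbb{P}_X(K)}(\dim V-1)$ to lower twists through the tautological sequence, and this is precisely where both the hypothesis that $\mathcal{E}^\vee$ be globally generated (ensuring that $K$ is locally free rather than merely a complex) and the rectangularity of the Lefschetz structure (keeping the twist bookkeeping clean) are genuinely used. Once this matching is in place the remaining verifications reduce to standard Bott-type cohomology vanishing on the projective bundles $\mathbb{P}_X(K)$ and $\mathbb{P}_S(\mathcal{E}^\perp)$.
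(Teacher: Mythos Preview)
The paper does not actually prove this theorem: it is quoted verbatim as \cite[Corollary~8.3]{MR2354207} and no argument is given beyond the citation. So there is no ``paper's own proof'' to compare your proposal against.

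That said, your sketch is essentially the strategy Kuznetsov uses in \S8 of the cited reference. The identification of $\mathbf{H}(X)$ as $\mathbb{P}_X(K)$, the embedding of $X\times_S Y$ as a zero locus inside it, the use of Orlov's formula twice, and the block count $r(\dim V-1)=(r-1)\dim V+(\dim V-r)$ are all correct and appear there. One caution: the ``sequence of mutations'' you allude to is the genuinely technical part of Kuznetsov's argument (his Lemma~8.2 and the surrounding discussion), and your description of it as trading top twists via the tautological sequence is a bit vague---in practice one needs the Cayley trick or an explicit Koszul-type resolution argument to show that the Fourier--Mukai functor with kernel $\mathcal{O}_{X\times_S Y}$ lands in the right orthogonal and generates it. Your remark that global generation of $\mathcal{E}^\vee$ is used to make $K$ locally free is slightly off: $K$ is automatically locally free as the kernel of a surjection of locally free sheaves; the global generation is used rather to ensure that $\mathcal{E}^\perp$ is locally free and that the morphism $f$ to $\mathbb{P}(V)$ exists. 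Apart from these minor points of emphasis, the outline is sound.
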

This relative version of homological projective duality is explained in \cite[Remark~6.28]{MR2354207}:
instead of the morphism~$X\to\mathbb{P}(V)$
the morphism~$X\to\mathbb{P}(V)\times S$ is used.
One can apply this relative version of homological projective duality
using arbitrary morphisms of vector bundles~$\phi\colon\mathcal{F}\to\mathcal{E}^\vee$,
as explained in \cite[Theorem~8.8]{MR2354207}.
This e.g.~leads to a derived equivalence for certain flops,
see Corollary~8.9~in op.~cit.

Rather we want to consider homological projective duality for the morphism \eqref{equation:hpd-bundles-morphism},
so that we will only consider linear sections given by~$L\subseteq V$.

\begin{corollary}[Absolute version]
  \label{corollary:absolute-version}
  With the setup of \cref{theorem:relative-version},
  define~$V\colonequals\HH^0(S,\mathcal{E}^\vee)^\vee$.
  The homological projective dual of~$X\to\mathbb{P}(V)$
  with respect to the Lefschetz structure from \eqref{equation:lefschetz-decomposition-bundle}
  is~$Y\to\mathbb{P}(V^\vee)$,
  with respect to the Lefschetz structure from \eqref{equation:lefschetz-decomposition-orthogonal-bundle}.
\end{corollary}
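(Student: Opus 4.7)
The plan is to deduce the absolute version directly from \cref{theorem:relative-version} by verifying that the passage from the relative to the absolute setting is a purely forgetful operation on the linear structure: the underlying Fourier--Mukai equivalence, Lefschetz decompositions, and universal hyperplane sections all remain unchanged. To begin, I would observe that by construction $\mathcal{O}_p(1)\cong f^*\mathcal{O}_{\mathbb{P}(V)}(1)$ and $\mathcal{O}_q(1)\cong g^*\mathcal{O}_{\mathbb{P}(V^\vee)}(1)$, so the rectangular $\derived^\bounded(S)$-linear Lefschetz decompositions \eqref{equation:lefschetz-decomposition-bundle} and \eqref{equation:lefschetz-decomposition-orthogonal-bundle} serve simultaneously as Lefschetz decompositions for $f$ and $g$ with respect to the tautological line bundles pulled back from the projective spaces.

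Next I would identify the universal hyperplane sections. The incidence $\mathbf{H}\subset\mathbb{P}(V)\times\mathbb{P}(V^\vee)$ pulls back to a relative incidence $\mathbf{H}\times S$ inside $\mathbb{P}_S(V\otimes\mathcal{O}_S)\times_S\mathbb{P}_S(V^\vee\otimes\mathcal{O}_S)$, and because the map $X\to\mathbb{P}(V)\times S$ is compatible with the projection to $S$, one obtains a canonical isomorphism $\mathbf{H}_S(X)\cong\mathbf{H}(X)$. With this identification in hand, \cref{theorem:relative-version} provides a $(\mathbb{P}(V^\vee)\times S)$-linear Fourier--Mukai functor from $\derived^\bounded(Y)$ to the right orthogonal of the standard subcategories $\mathcal{A}_i(i)\boxtimes\derived^\bounded(\mathbb{P}(V^\vee))$ inside $\derived^\bounded(\mathbf{H}(X))$. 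This functor is in particular $\mathbb{P}(V^\vee)$-linear via the projection $\mathbb{P}(V^\vee)\times S\to\mathbb{P}(V^\vee)$, and its kernel lives in the derived category of the fiber product over $\mathbb{P}(V)\times\mathbb{P}(V^\vee)$ required by the definition of a homological projective dual.

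The hard part, requiring genuine care, is verifying that the right orthogonal defining $\derived^\bounded(X)^\natural$ in the absolute $\mathbb{P}(V^\vee)$-linear sense coincides with the right orthogonal computed relative to $\mathbb{P}(V^\vee)\times S$. This is where rectangularity and the specific form $\mathcal{A}_i=p^*(\derived^\bounded(S))$ become essential: the subcategories $\mathcal{A}_i(i)\boxtimes\derived^\bounded(\mathbb{P}(V^\vee))$ already absorb the $S$-action through $p^*(\derived^\bounded(S))\subseteq\mathcal{A}_0$, so $\mathbb{P}(V^\vee)$-linear generation and $(\mathbb{P}(V^\vee)\times S)$-linear generation yield the same subcategory. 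Once this comparison is secured, the dual Lefschetz structure, the matching of lengths (both decompositions being rectangular), and the Lefschetz equivalence between the centers $\mathcal{A}_0$ and $\mathcal{B}_0$ all transport verbatim from \cref{theorem:relative-version} to the absolute statement.
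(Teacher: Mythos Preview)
The paper gives no explicit proof of this corollary; it is stated immediately after \cref{theorem:relative-version} and the surrounding discussion, with the implicit understanding that the absolute statement is obtained from the relative one simply by restricting attention to linear sections coming from constant subspaces~$L\subseteq V$ rather than arbitrary vector bundle morphisms. Your proposal is therefore not so much a different approach as a careful unpacking of what the paper leaves tacit.

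The argument you give is sound. The identification $\mathbf{H}_S(X)\cong\mathbf{H}(X)$ is correct because the map $X\to\mathbb{P}(V)\times S$ has second component~$p$, and the key observation---that the subcategories $\mathcal{A}_i(i)\boxtimes\derived^\bounded(\mathbb{P}(V^\vee))$ coincide whether one generates $\mathbb{P}(V^\vee)$-linearly or $(\mathbb{P}(V^\vee)\times S)$-linearly---is precisely what makes the right orthogonals agree. This holds because $\mathcal{A}_i=p^*\derived^\bounded(S)$, so that $\mathcal{A}_i(i)\boxtimes\derived^\bounded(\mathbb{P}(V^\vee))$ is already the image of $\derived^\bounded(S\times\mathbb{P}(V^\vee))$ under pullback, and this category carries the full $S$-linear structure. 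One small point: you might state explicitly that the Fourier--Mukai kernel furnished by the relative version automatically lives on $(X\times Y)\times_{\mathbb{P}(V)\times\mathbb{P}(V^\vee)}\mathbf{H}$, as required by the absolute definition, rather than merely on the relative incidence; but this is immediate from the identification of universal hyperplanes you already made.
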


In the setting of \cref{theorem:relative-version,corollary:absolute-version} the homological projective dual is again a very explicit variety (which a priori does not need to be true at all).
In order to apply this absolute version of homological projective duality,
one needs to determine~$\mathcal{E}^\perp$ and understand the induced morphism~$Y\to\mathbb{P}(V^\vee)$.

The interested reader is referred to \cite{MR3490767} for other applications of \cref{corollary:absolute-version},
where this setup is the main ingredient in the proof of homological projective duality for determinantal varieties.
The geometric input in op.~cit.~is provided by the Springer resolution of the space of matrices of bounded rank,
so that~$\mathcal{E}^\perp$ has an interpretation in terms of representation theory.
In the next section we will describe the geometric input for the Segre cubic.

\section{The Segre cubic, the Castelnuovo--Richmond quartic and the Coble fourfold}
\label{section:segre-igusa-coble}

\paragraph{The Segre cubic $\segre$}
The \emph{Segre cubic}~$\segre$ lives in~$\mathbb{P}^4$,
but we can define it more symmetrically in~$\mathbb{P}^5$ as the subvariety defined by
\begin{equation}
  \label{equation:segre-cubic-P5}
  \left\{
  \begin{aligned}
    0&=\sum_{i=0}^5x_i, \\
    0&=\sum_{i=0}^5x_i^3.
  \end{aligned}
  \right.
\end{equation}
We will consider~$\segre$ as embedded in the invariant hyperplane given by the first equation.
This is a more invariant description than a description directly within~$\mathbb{P}^4$,
as this exhibits~$\segre$ as the zero locus of the first and third symmetric polynomial,
and it is the starting point to prove that~$\Aut(\segre)\cong\sym_6$.

The Segre cubic is characterised as the unique cubic threefold with ten nodes, up to projective equivalence.
For more about its projective geometry we refer to \cref{subsection:projective}.

It appears often in algebraic geometry.
For instance, by Coble \cite{MR1501008} (see also \cite[Theorem~9.4.8]{MR2964027})
$\segre$ is isomorphic to the~GIT~quotient~$(\mathbb{P}^1)^6\sslash\mathrm{SL}_2$.
If one blows up the~10~nodes on~$\segre$
the resulting variety is~$\overline{\mathrm{M}}_{0,6}$,
the Deligne--Mumford compactification of
the moduli space of stable rational curves with~6~marked points \cite[Remark~2.33]{MR4076812}.

It is also the Igusa compactification of
the moduli space of principally polarised abelian surfaces with a level-2 structure,
as explained in \cite[Theorem~IV.1.4]{MR1929793}.
One can moreover show that~$\segre$ is the Satake compactification of~$Y^*(\sqrt{-3})$ \cite[Theorem~1]{MR1274130},
the arithmetic quotient~$(\Gamma(\sqrt{-3})\setminus\mathbb{B}_3)^*$,
where~$\Gamma(\sqrt{-3})$ is the corresponding lattice in~$\mathrm{U}(3,1,\mathcal{O}_{\mathbb{Q}(\sqrt{-3})})$
and~$\mathbb{B}_3$ is the complex~3-ball.

Another modular interpretation of $\segre$ and some of its resolutions,
closely related to the GIT description,
will be discussed in \cref{section:segre-quiver}.

What is relevant to us about the Segre cubic is that its projective dual is of degree~4,
which is remarkably low.
Indeed, the \emph{Pl\"ucker--Teissier formula} \cite[Theorem~1.2.5]{MR2964027} for a hypersurface~$X$ of degree~$d$ in~$\mathbb{P}^n$
with~$m$ nodal singularities (and no other singularities) gives
\begin{equation}
  \label{equation:plucker-teissier}
  \deg X^\vee=d(d-1)^{n-1}-2m
\end{equation}
for the projectively dual hypersurface~$X^\vee$.
We will describe the projective dual of the Segre cubic explicitly next.
On the other hand,
the projective dual of a smooth cubic threefold is a highly singular hypersurface of degree~24,
and we explain in \cref{subsection:other-cubics} why homological projective duality in this case is not very enlightening.

\paragraph{The Castelnuovo--Richmond quartic $\igusa$}
The \emph{Castelnuovo--Richmond quartic} $\igusa$
(or \emph{Igusa quartic})
in $\mathbb{P}^5$ can be defined as the complete intersection by the two equations
\begin{equation}
  \label{equation:igusa-quartic-in-P5}
  \left\{
  \begin{aligned}
    0&=\sum_{i=0}^5x_i, \\
    0&=\left( \sum_{i=0}^5x_i^2 \right)^2-4\sum_{i=0}^5x_i^4.
  \end{aligned}
  \right.
\end{equation}
We will consider~$\igusa$ as embedded in the invariant hyperplane given by the first equation.
By \cite[Theorem~9.4.12]{MR2964027}
$\igusa$ is projectively dual to $\segre$.
It is singular along~15~lines,
which intersect in~15~points,
with~3~lines through each point,
and~3~points on each line,
giving rise to the \emph{Cremona--Richmond configuration}.
We will further discuss its singularities and projective geometry in \cref{subsection:projective}.

This quartic threefold is part of the pencil of~$\sym_6$-invariant quartics in the invariant hyperplane
given by
\begin{equation}
  \left( \sum_{i=0}^5x_i^2 \right)^2-\frac{1}{t}\sum_{i=0}^5x_i^4,
\end{equation}
as explained in \cite[\S1]{MR4076812}.
For~$t=1/4$ we obtain~$\igusa$,
whilst~$t=1/2$ is the \emph{Burkhardt quartic}.
Other values of~$t$ exhibit their own interesting behavior.

As for~$\segre$,
the Castelnuovo--Richmond quartic has a rich modular theory.
From the projective duality
$\segre$ and $\igusa$ are birational.
This manifests itself for instance as
an interpretation of~$\igusa$ as the compactification of the moduli of 6~ordered points on a conic,
and hence also has a quotient description as (another) GIT quotient~$(\mathbb{P}^1)^6\sslash\mathrm{SL}_2$.
It is also the Satake compactification
of the moduli space of principally polarised abelian surfaces with a level-2 structure,
and it is the Satake compactification~$X^*(2)$ \cite[Theorem~1']{MR1274130},
the arithmetic quotient~$(\Gamma(2)\setminus\mathbb{S}_2)^*$,
where~$\Gamma(2)$ is the corresponding lattice in~$\mathrm{Sp}(4,\mathbb{Z})$,
and~$\mathbb{S}_2$ is the Siegel space of degree~2.

\paragraph{The Coble fourfold $\coble$}
The last remaining player to be introduced in this section is the Coble fourfold $\coble$,
defined as the double cover
\begin{equation}
  \pi\colon\coble\to\mathbb{P}^4
\end{equation}
branched along the Castelnuovo--Richmond quartic.

The Coble fourfold also appears often in algebraic geometry.
For instance,
by Coble \cite{MR1501008} (see \cite{MR1007155} for a modern account)
$\coble$ is isomorphic to the~GIT~quotient~$(\mathbb{P}^2)^6\sslash\mathrm{SL}_3$,
and thus related to the moduli space of marked cubic surface
(with the marking corresponding to the ordering of the~6~points being blown up).
It also has a modular interpretation as the Baily--Borel compactification of an arithmetic quotient,
parametrising~K3 surfaces which are double covers of~$\mathbb{P}^2$
branched along an (ordered) set of 6~lines \cite[Theorem~3.5.6]{MR1438547}.

The geometry of its singularities is the same as that of the Castelnuovo--Richmond quartic.

\subsection{Resolutions of singularities}
\label{subsection:resolutions}
In order to apply the theory of homological projective duality as described in \cref{section:hpd}
we need to suitably replace the Segre cubic (and the Coble fourfold),
because the input (at least in the version from \cite{MR2354207}) is required to be smooth and proper.
We comment on a variation without resolution in \cref{subsection:other-cubics}.


The small resolutions of the Segre cubic were classified by Finkelnberg in \cite{MR0914085}.
There are two resolutions in the overview table in \S5 of op.~cit.\ whose automorphism group is~$\sym_5$,
corresponding to type IV and VI.
The geometry of one of these resolutions
is linked to a resolution of the Coble fourfold~$\coble$,
and we will now recall this from \cite[\S2.2]{MR4076812}.
An alternative discussion of the same geometry is given in \cite[\S5]{1906.12295v2}.
We comment on the other resolution in \cref{remark:small-resolutions-segre}.

\paragraph{The quintic del Pezzo surface}
The key in the description of the two resolutions is the del Pezzo surface~$S$ of degree~5.
There is (up to isomorphism) a unique such surface,
isomorphic to the blowup of~$\mathbb{P}^2$ in 4~points,
no three of which are collinear.
Its automorphism group is~$\sym_5$,
whose irreducible representations are described in \cref{table:character-table-s5}.
The most important representation for us will be the irreducible 5-dimensional representation~$W$
with trivial determinant.
It arises,
together with~$W\otimes-1$,
as the summands of~$\Ind_{\sym_4}^{\sym_5}U$ where~$U$ is the unique 2-dimensional irreducible representation of~$\sym_4$.

As recalled in \cite[\S2.2]{MR4076812}, the canonical embedding
\begin{equation}
  S\hookrightarrow\mathbb{P}(\HH^0(S,\omega_S^\vee))=\mathbb{P}(\textstyle\bigwedge^2V)=\mathbb{P}^5
\end{equation}
involves the unique irreducible~$\sym_5$-representation~$\bigwedge^2V$ of dimension~6.
From this description of~$S$ as the (non-complete) intersection of~5~quadrics,
we obtain the irreducible 5-dimensional representation
\begin{equation}
  \HH^0(\mathbb{P}^5,\mathcal{I}_S(2))^\vee=W
\end{equation}
as per \cite[Remark~2.31]{MR4076812}.

Two manifestations of the Grassmannians will play a role in what follows:
$\Gr(2,W^\vee)$ and~$\Gr(3,W)$.
These come equipped with universal subbundles of ranks~2 respectively~3,
which will be denoted~$\mathcal{U}_2$ resp.~$\mathcal{U}_3$.
We have the universal short exact sequences
\begin{equation}
  \label{equation:universal-Gr(2,5)}
  0\to\mathcal{U}_2\to W^\vee\otimes\mathcal{O}_{\Gr(2,W^\vee)}\to\mathcal{Q}_3\to 0
\end{equation}
and
\begin{equation}
  \label{equation:universal-Gr(3,5)}
  0\to\mathcal{U}_3\to W\otimes\mathcal{O}_{\Gr(3,W)}\to\mathcal{Q}_2\to 0.
\end{equation}

Using the embedding $\mathbb{P}(\bigwedge^2V)\hookrightarrow\mathbb{P}(\bigwedge^3W)$
from \cite[Lemma~2.29]{MR4076812} we can describe~$S$ as
the intersection of~$\Gr(3,W)$ in its Pl\"ucker embedding with~$\mathbb{P}(\bigwedge^2V)$.
We remark that the linear $\sym_5$-invariant embedding~$\mathbb{P}(\bigwedge^2V)\hookrightarrow\mathbb{P}(\bigwedge^3W)$
corresponds to the irreducible representation~$V \wedge V$ appearing as the only six-dimensional irreducible representation in~$\bigwedge^3W$,
see also \cite[\S2.4]{MR3851123}.

This allows us to restrict~$\mathcal{U}_2$ and~$\mathcal{U}_3$ to~$S\subseteq\Gr(2,W^\vee)\cong\Gr(3,W)$,
and we will use the same notation for these rank~2 and~3 bundles on~$S$.

\begin{table}
  \centering
  \begin{tabular}{ccccc}
    \toprule
    description                         & notation       & dimension & partition              & determinant \\\midrule
    trivial representation              & $1$            & $1$       & $\ydiagram{5}$         & $1$ \\
    sign representation                 & $-1$           & $1$       & $\ydiagram{1,1,1,1,1}$ & $-1$ \\\addlinespace
    standard representation             & $V$            & $4$       & $\ydiagram{4,1}$       & $-1$ \\\addlinespace
    twisted standard representation     & $V\otimes -1$  & $4$       & $\ydiagram{2,1,1,1}$   & $-1$ \\\addlinespace
    half of induced from $\sym_4$       & $W$            & $5$       & $\ydiagram{3,2}$       & $1$ \\\addlinespace
    other half of induced from $\sym_4$ & $W\otimes -1$  & $5$       & $\ydiagram{2,2,1}$     & $-1$ \\\addlinespace
    exterior square of standard         & $\bigwedge^2V$ & $6$       & $\ydiagram{3,1,1}$     & $-1$ \\
    \bottomrule
  \end{tabular}
  \caption{Irreducible representations of $\sym_5$}
  \label{table:character-table-s5}
\end{table}

\paragraph{Small resolutions of the Segre cubic and Coble fourfold}
We now define the two main varieties,
whose homological projective duality we will prove in \cref{theorem:main-theorem-precise}.
Namely we set
\begin{equation}
  X\colonequals\mathbb{P}_S(\mathcal{U}_2),\qquad Y\colonequals\mathbb{P}_S(\mathcal{U}_3).
\end{equation}
Because~$\mathcal{U}_2$ respectively~$\mathcal{U}_3$ are naturally subbundles of~$W^\vee\otimes\mathcal{O}_S$ respectively~$W\otimes\mathcal{O}_S$
we obtain two ($\sym_5$-equivariant) morphisms:
\begin{equation}
  f\colon X\to\mathbb{P}(W^\vee),\qquad g\colon Y\to\mathbb{P}(W).
\end{equation}
The following proposition explains our interest in these projective bundles.
It is a combination of \cite[Lemma~2.32 and Proposition~2.44]{MR4076812}.
\begin{proposition}[Cheltsov--Kuznetsov--Shramov]
  \label{proposition:resolution}
  The natural map~$f\colon X\to\mathbb{P}(W^\vee)$
  has the Segre cubic~$\segre$ as its image,
  and exhibits~$X$ as a small resolution~$\varpi\colon X\to\segre$.

  The Stein factorisation of the natural map~$g\colon Y\to\mathbb{P}(W)$
  has the Coble fourfold~$\coble$ as intermediate variety,
  and it is the composition of a small resolution~$\rho\colon Y\to\coble$
  and the Coble fourfold's defining double cover ramified in the Castelnuovo--Richmond quartic~$\igusa$.
\end{proposition}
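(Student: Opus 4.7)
The plan is to verify the two assertions in parallel by studying both scrolls geometrically. For the first, $X = \mathbb{P}_S(\mathcal{U}_2)$ parametrizes pairs $(s, [v])$ with $s \in S$ and $[v] \in \mathbb{P}((\mathcal{U}_2)_s) \subset \mathbb{P}(W^\vee)$, and $f$ sends such a pair to $[v]$. Thus the image of $f$ is the union of the lines in $\mathbb{P}(W^\vee) = \mathbb{P}^4$ parametrized by $S \hookrightarrow \Gr(2, W^\vee)$. Since $\dim X = \dim \segre = 3$, the map $f$ is generically finite; a Schubert intersection on $\Gr(2, W^\vee)$ shows that a general point of $\mathbb{P}^4$ lies on exactly one line of this family, so $f$ is birational onto its image. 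A Chern-class computation, using Orlov's projective bundle formula to push forward $c_1(\mathcal{O}_p(1))^3$ to $S$ through the Segre class $s_2(\mathcal{U}_2^\vee)$, shows that the image has degree $3$. To identify it with $\segre$, one observes that $f$ fails to be an isomorphism precisely where two distinct lines of the family meet; an incidence count on $S$ gives $10$ such coincidence points, each an ordinary double point of the image. By the classical uniqueness of $\segre$ as the (up to projective equivalence) only $10$-nodal cubic threefold, the image must be $\segre$ and $f$ factors through a small resolution $\varpi \colon X \to \segre$.

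The second assertion is handled analogously. Now $Y = \mathbb{P}_S(\mathcal{U}_3)$ is the scroll of $2$-planes in $\mathbb{P}^4 = \mathbb{P}(W)$ parametrized by $S \subset \Gr(3, W)$, and $\dim Y = \dim \mathbb{P}(W) = 4$. The Schubert calculation on $\Gr(3, W)$ counting $2$-planes of the family that contain a general line yields $2$, so $g$ has degree $2$ and $g_*\mathcal{O}_Y$ is a rank-$2$ $\mathcal{O}_{\mathbb{P}(W)}$-algebra. The Stein factorization then takes the form $Y \xrightarrow{\rho} Z \xrightarrow{\pi} \mathbb{P}(W)$ with $\pi$ a finite double cover; its branch divisor is an $\sym_5$-invariant quartic (whose degree again arises from a Chern-class computation on $Y$), and $\rho$ contracts $\mathbb{P}^1$-fibers over a collection of singular points of $Z$.

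The main obstacle is the final identification: showing that the branch quartic of $\pi$ is precisely the Castelnuovo--Richmond quartic $\igusa$ (so that $Z = \coble$), and confirming that both $\varpi$ and $\rho$ have exclusively $\mathbb{P}^1$-fibers over their respective singular loci (so that they really are \emph{small} resolutions). Both points come down to an explicit computation in $\sym_5$-invariant coordinates on $\mathbb{P}(W)$ and $\mathbb{P}(W^\vee)$, carried out in \cite[Lemma~2.32 and Proposition~2.44]{MR4076812}; the projective duality of $\segre$ and $\igusa$ serves as a useful consistency check but does not by itself provide the identification.
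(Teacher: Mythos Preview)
The paper does not actually prove this proposition: it is stated as a direct combination of \cite[Lemma~2.32 and Proposition~2.44]{MR4076812}, with no argument given. Your proposal ultimately lands in the same place, deferring the decisive identifications (that the branch quartic is~$\igusa$, and that the exceptional fibers of~$\varpi$ and~$\rho$ are all~$\mathbb{P}^1$'s) to exactly that citation. In that sense your approach agrees with the paper's.

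What you add is a heuristic outline of \emph{why} one expects these statements to hold: the scroll description of~$X$ and~$Y$, the generic degree of~$f$ and~$g$ via Schubert calculus, and the degree of the image via a push-forward of~$c_1(\mathcal{O}_p(1))^3$. This is a reasonable narrative, but a couple of the intermediate assertions are imprecise. The claim that ``a general point of~$\mathbb{P}^4$ lies on exactly one line of this family'' cannot be right as stated: the image of~$f$ is a threefold in~$\mathbb{P}^4$, so a general point of~$\mathbb{P}^4$ lies on \emph{no} line of the family. What one actually computes is~$\int_X f^*H^3$, which equals the product of the degree of~$f$ onto its image and the degree of the image; separating these two factors requires further input. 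Similarly, the description of the nodes as ``points where two distinct lines of the family meet'' is not quite the mechanism at play: in \cite{MR4076812} the ten~$\mathbb{P}^1$-fibers of~$\varpi$ are the ten lines on the quintic del~Pezzo~$S$, each contracted by~$f$ to a single point of~$\segre$. Since you already acknowledge that the rigorous verification is in the cited reference, these are slips in the motivation rather than genuine gaps.
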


We can summarise the situation in the following diagrams:
for the Segre cubic~$\segre$ we have
\begin{equation}
  \begin{tikzcd}
    & X=\mathbb{P}_S(\mathcal{U}_2) \arrow[ld, "p", "\mathbb{P}^1\text{-bundle}"'] \arrow[rr, "f"] \arrow[rd, "\varpi", "\substack{\text{small} \\ \text{resolution}}"'] & & \mathbb{P}(W^\vee) \\
    S & & \segre \arrow[ru, hook, "i"]
  \end{tikzcd}
\end{equation}
and the Coble fourfold~$\coble$ (and Castelnuovo--Richmond quartic~$\igusa$) we have
\begin{equation}
  \begin{tikzcd}
    & & & \igusa \arrow[d, hook] \\
    & Y=\mathbb{P}_S(\mathcal{U}_3) \arrow[ld, "q", "\mathbb{P}^2\text{-bundle}"'] \arrow[rr, "g"] \arrow[rd, "\rho", "\substack{\text{small} \\ \text{resolution}}"'] & & \mathbb{P}(W) \\
    S & & \coble \arrow[ru, "\pi", "2:1"']
  \end{tikzcd}
\end{equation}

We will discuss some properties of these resolutions in \cref{section:linear-sections},
as we need to understand the properties of the fibers
in order to describe how
(derived categories of) linear sections of~$X$ and~$Y$
are related.

\begin{remark}
  \label{remark:small-resolutions-segre}
  All small resolutions of the Segre cubic are all obtained by blowing up the 10~nodes,
  and contracting a factor of every exceptional~$\mathbb{P}^1\times\mathbb{P}^1$.
  This gives~$2^{10}$ resolutions,
  which can be grouped into~13~isomorphism classes \cite{MR0914085}.
  Of these, 6~correspond to smooth projective varieties,
  the other~7~correspond to smooth proper non-projective varieties.
  They are all related via flops,
  hence by \cite[Theorem~3.9]{alg-geom/9506012}
  (or the more general \cite[Theorem~1.1]{MR1893007})
  they are all derived equivalent.

  The preferred resolution used here,
  and taken from \cite{MR4076812},
  has the benefit of giving an easy-to-describe Lefschetz structure
  for which we can immediately apply an existing result to determine the homological projective dual.
\end{remark}

One could also use another resolution.
By \cite[Proposition~4.6]{MR2605172}
the blowup~$X'\colonequals\mathrm{Bl}_5\mathbb{P}^3$
of~5~points in general position
realises the second isomorphism type
of small resolutions of~$\segre$ with automorphism group~$\sym_5$.
This is again a weak Fano threefold of index~2,
whose half-anticanonical divisor~$\mathcal{O}_{X'}(-\frac{1}{2}\mathrm{K}_{X'})$
in this description is~$\mathcal{O}_{X'}(2H-E)$,
where the~$E_i$ are the exceptional divisors,
we set~$E\colonequals\sum_{i=1}^5E_i$,
and~$H$ is the pullback of~$\mathcal{O}_{\mathbb{P}^3}(1)$.
It is basepoint-free,
and defines a morphism to~$\mathbb{P}^4$ with image the Segre cubic,
which realises it as the resolution of~$\segre$.

\begin{proposition}
  \label{proposition:blowup-lefschetz-structure}
  Let~$X'\to\segre$ be the small resolution of the Segre cubic~$\segre\hookrightarrow\mathbb{P}(W^\vee)$
  given by \cite[Proposition~4.6]{MR2605172},
  so that~$X'=\Bl_5\mathbb{P}^3$ of~5~points in general position.
  Let~$\mathcal{O}_{X'}(H)$ be the pullback of~$\mathcal{O}_{\mathbb{P}^3}(1)$,
  and~$E_1,\ldots,E_5$ the exceptional divisors.
  Then
  \begin{equation}
    \label{equation:blowup-lefschetz-structure}
    \begin{aligned}
      \derived^\bounded(X')&=
      \langle
        \mathcal{O}_{X'},
        \mathcal{O}_{X'}(H),
        \mathcal{O}_{E_i}
        \mid i=1,\ldots,5; \\
        &\qquad
        \mathcal{O}_{X'}(2H-E),
        \mathcal{O}_{X'}(3H-E);
        \mathcal{O}_{E_i}(E_i)
        \mid i=1,\ldots,5
        \rangle,
    \end{aligned}
  \end{equation}
  is a rectangular Lefschetz decomposition of~$\derived^\bounded(X')\cong\derived^\bounded(X)$.
\end{proposition}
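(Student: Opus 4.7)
My plan is to interpret~\eqref{equation:blowup-lefschetz-structure} as the rectangular Lefschetz decomposition $\langle\mathcal{A}_0,\mathcal{A}_0\otimes\mathcal{O}_{X'}(2H-E)\rangle$, where $\mathcal{A}_0\colonequals\langle\mathcal{O}_{X'},\mathcal{O}_{X'}(H),\mathcal{O}_{E_1},\ldots,\mathcal{O}_{E_5}\rangle$. This reduces the statement to three verifications: the seven objects spanning~$\mathcal{A}_0$ form an exceptional collection in the indicated order; the second block coincides as an admissible subcategory with the twist $\mathcal{A}_0\otimes\mathcal{O}_{X'}(2H-E)$; and together the fourteen objects generate~$\derived^\bounded(X')$.

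For the first verification, I would compute $\Ext$\nobreakdash-groups directly. Exceptionality and semiorthogonality among $\mathcal{O}_{X'}$ and $\mathcal{O}_{X'}(H)$ reduce via~$\pi_*$ to Beilinson's collection on~$\mathbb{P}^3$. Each pushforward~$i_*\mathcal{O}_{E_i}$ is exceptional: the Koszul resolution of the divisor~$E_i$ together with the normal bundle identification $\mathcal{O}_{X'}(E_i)|_{E_i}=\mathcal{O}_{\mathbb{P}^2}(-1)$ yield $\LLL i^*i_*\mathcal{O}_{E_i}=\mathcal{O}_{\mathbb{P}^2}\oplus\mathcal{O}_{\mathbb{P}^2}(1)[1]$, whence by adjunction $\Ext^\bullet_{X'}(i_*\mathcal{O}_{E_i},i_*\mathcal{O}_{E_i})=\HH^\bullet(\mathbb{P}^2,\mathcal{O})\oplus\HH^\bullet(\mathbb{P}^2,\mathcal{O}(-1))[-1]=k$. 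Semiorthogonality between the line bundles and the skyscrapers follows from the triviality of~$H|_{E_i}$ and the vanishing of~$\HH^\bullet(\mathbb{P}^2,\mathcal{O}(-1))$, while mutual orthogonality of the~$i_*\mathcal{O}_{E_i}$ is immediate from the disjointness of the exceptional divisors.

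For the second verification, I would tensor $\mathcal{A}_0$ generator-by-generator with $\mathcal{O}_{X'}(2H-E)$: the line bundles give $\mathcal{O}_{X'}(2H-E)$ and $\mathcal{O}_{X'}(3H-E)$, while each $i_*\mathcal{O}_{E_i}$ gives $i_*\mathcal{O}_{\mathbb{P}^2}(1)$, since $(2H-E)|_{E_i}=\mathcal{O}_{\mathbb{P}^2}(1)$ (using $H|_{E_i}=0$ and $\mathcal{O}_{X'}(E_i)|_{E_i}=\mathcal{O}_{\mathbb{P}^2}(-1)$). One then identifies the resulting admissible subcategory with that generated by the second block of~\eqref{equation:blowup-lefschetz-structure}, via short exact sequences on~$X'$ coming from the inclusions $E_i\hookrightarrow X'$ combined with Beilinson-type resolutions on each $E_i\cong\mathbb{P}^2$. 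For the third verification, a length count suffices: since iterated application of Orlov's blowup formula to~$\pi\colon X'\to\mathbb{P}^3$ (starting from Beilinson's collection on~$\mathbb{P}^3$, and using that the five centers are disjoint) produces a full exceptional collection of length~$4+5\cdot 2=14$, the rank of~$K_0(X')$ equals~$14$; hence any exceptional collection of that length is automatically full, and the fourteen objects in~\eqref{equation:blowup-lefschetz-structure} generate $\derived^\bounded(X')$ once semiorthogonality is in place.

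The main obstacle is the second verification: identifying the listed skyscraper generators~$\mathcal{O}_{E_i}(E_i)$ with the literal twists $i_*\mathcal{O}_{E_i}\otimes\mathcal{O}_{X'}(2H-E)=i_*\mathcal{O}_{\mathbb{P}^2}(1)$ at the level of admissible subcategories. Matching them up requires careful bookkeeping of the pushforwards~$i_*\mathcal{O}_{\mathbb{P}^2}(k)$ for various~$k$ against the global line bundles~$\mathcal{O}_{X'}(jH-E)$, whose restrictions to each~$E_i$ furnish the bridges needed to pass between the different twists via triangles in~$\derived^\bounded(X')$. Once this identification is made, rectangularity and the semiorthogonality between the two blocks follow formally from the first verification by tensoring with~$\mathcal{O}_{X'}(2H-E)$.
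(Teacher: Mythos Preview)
Your approach is more hands-on than the paper's, which simply starts from Orlov's blowup collection
\[
\derived^\bounded(X')=\langle\mathcal{O}_{X'},\mathcal{O}_{X'}(H),\mathcal{O}_{X'}(2H),\mathcal{O}_{X'}(3H);\mathcal{O}_{E_i},\mathcal{O}_{E_i}(-E_i)\mid i=1,\ldots,5\rangle
\]
and right mutates $\langle\mathcal{O}_{X'}(2H),\mathcal{O}_{X'}(3H)\rangle$ through $\langle\mathcal{O}_{E_i}\mid i\rangle$. Since $\mathcal{O}_{E_i}(H)\cong\mathcal{O}_{E_i}$ and $\mathcal{O}_{E_i}(E_j)\cong\mathcal{O}_{E_i}$ for $j\neq i$, the mutated line bundles become $\mathcal{O}_{X'}(2H-E)$ and $\mathcal{O}_{X'}(3H-E)$; fullness and semiorthogonality are inherited from Orlov's theorem and preserved by mutation.

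The ``main obstacle'' you isolate in your second verification is in fact a typo in the displayed statement: the last five objects should read $\mathcal{O}_{E_i}(-E_i)=i_*\mathcal{O}_{\mathbb{P}^2}(1)$ rather than $\mathcal{O}_{E_i}(E_i)=i_*\mathcal{O}_{\mathbb{P}^2}(-1)$, and this is exactly what the paper's own mutation produces. With the corrected sign your restriction calculation $(2H-E)|_{E_i}\cong\mathcal{O}_{\mathbb{P}^2}(1)$ shows that the second block is \emph{literally} $\mathcal{A}_0\otimes\mathcal{O}_{X'}(2H-E)$, so no matching of admissible subcategories via triangles is required. The collection as printed cannot be salvaged by such bookkeeping anyway: using $i^!\mathcal{O}_{X'}(2H-E)\cong\mathcal{O}_{\mathbb{P}^2}[-1]$ one finds $\Hom_{X'}^1(\mathcal{O}_{E_i}(E_i),\mathcal{O}_{X'}(2H-E))\cong\HH^0(\mathbb{P}^2,\mathcal{O}(1))\neq 0$, so the listed order is not semiorthogonal.

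There is also a genuine gap in your third verification. The assertion that an exceptional collection of length $\rk K_0(X')$ is automatically full is not a theorem; phantom subcategories obstruct precisely this inference. The paper's route avoids the issue because Orlov's formula already gives a full collection and mutation preserves fullness. If you wish to keep the direct strategy, replace the rank count by an explicit mutation from Orlov's collection to yours---which is, in the end, the paper's argument.
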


\begin{proof}
  From Orlov's blowup formula we have the full exceptional collection
  \begin{equation}
    \derived^\bounded(X')=
    \langle
      \mathcal{O}_{X'},
      \mathcal{O}_{X'}(H),
      \mathcal{O}_{X'}(2H),
      \mathcal{O}_{X'}(3H);
      \mathcal{O}_{E_i},
      \mathcal{O}_{E_i}(-E_i)
      \mid i=1,\ldots,5
    \rangle.
  \end{equation}
  Now right mutate~$\langle\mathcal{O}_{X'}(2H),\mathcal{O}_{X'}(3H)\rangle$
  with respect to~$\langle\mathcal{O}_{E_i}\mid i=1,\ldots,5\rangle$
  to obtain \eqref{equation:blowup-lefschetz-structure}.
  Here we have used that~$\mathcal{O}_{E_i}(H)=\mathcal{O}_{E_i}$
  and~$\mathcal{O}_{E_i}(E_j)=\mathcal{O}_E$ if~$i\neq j$.
\end{proof}
We will come back to this Lefschetz structure in \cref{proposition:lefschetz-comparisons}.
Observe that~$X$ and~$X'$ are related by flops,
hence we have by \cite[Theorem~1.1]{MR1893007}
we have that~$\derived^\bounded(X)\cong\derived^\bounded(X')$.

\subsection{Homological projective duality for the Segre cubic}
\label{subsection:segre-hpd}
We can now state and prove the main theorem,
which was stated somewhat imprecisely in \cref{theorem:main-theorem-imprecise}.
Because the small resolutions discussed in \cref{subsection:resolutions}
are both described by projective bundles over the quintic del Pezzo surface~$S$
we only need to check the relation between the defining bundles
to establish homological projective duality.

\begin{theorem}
  \label{theorem:main-theorem-precise}
  Let~$X\to\segre$ be the small resolution of the Segre cubic~$\segre\hookrightarrow\mathbb{P}(W^\vee)$ given in \cref{proposition:resolution},
  so that~$p\colon X=\mathbb{P}_S(\mathcal{U}_2)\to S$ is a~$\mathbb{P}^1$-bundle over the del Pezzo surface~$S$ of degree~5.
  Consider the morphism~$f\colon X\to\mathbb{P}(W^\vee)$,
  and the rectangular Lefschetz decomposition
  \begin{equation}
    \label{equation:lefschetz-for-segre-cubic}
    \derived^\bounded(X)=
    \langle
      p^*(\derived^\bounded(S)),
      p^*(\derived^\bounded(S))\otimes\mathcal{O}_X(1)
    \rangle
  \end{equation}
  where~$\mathcal{O}_X(1)=f^*(\mathcal{O}_{\mathbb{P}(W^\vee)}(1))$.

  Its homological projective dual is~$g\colon Y\to\mathbb{P}(W)$,
  where~$Y\to\coble$ is the small resolution of the Coble fourfold given in \cref{proposition:resolution},
  so that~$q\colon Y=\mathbb{P}_S(\mathcal{U}_3)\to S$ is a~$\mathbb{P}^2$-bundle over~$S$.
  The dual Lefschetz decomposition is given by
  \begin{equation}
    \derived^\bounded(Y)=
    \langle
      q^*(\derived^\bounded(S))\otimes\mathcal{O}_Y(-2),
      q^*(\derived^\bounded(S))\otimes\mathcal{O}_Y(-1),
      q^*(\derived^\bounded(S))\otimes\mathcal{O}_Y
    \rangle
  \end{equation}
  where~$\mathcal{O}_Y(1)=g^*(\mathcal{O}_{\mathbb{P}(W)}(1))$.
\end{theorem}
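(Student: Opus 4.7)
The plan is to apply \cref{corollary:absolute-version} to the vector bundle $\mathcal{E}=\mathcal{U}_2$ on $S$. Once its hypotheses are checked, the corollary identifies the homological projective dual of $\mathbb{P}_S(\mathcal{U}_2)\to\mathbb{P}(\HH^0(S,\mathcal{U}_2^\vee)^\vee)$ with $\mathbb{P}_S(\mathcal{U}_2^\perp)\to\mathbb{P}(\HH^0(S,\mathcal{U}_2^\vee))$, and the associated Lefschetz structures \eqref{equation:lefschetz-decomposition-bundle} and \eqref{equation:lefschetz-decomposition-orthogonal-bundle} match those in the statement verbatim as soon as the global generation and duality facts below are established.

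For the hypothesis on global generation and the identification of $V$: dualising the natural inclusion $\mathcal{U}_2\hookrightarrow W^\vee\otimes\mathcal{O}_S$ from \eqref{equation:universal-Gr(2,5)} gives a surjection $W\otimes\mathcal{O}_S\twoheadrightarrow\mathcal{U}_2^\vee$, so $\mathcal{U}_2^\vee$ is globally generated and $W\hookrightarrow\HH^0(S,\mathcal{U}_2^\vee)$. Equality of these two five-dimensional spaces can be extracted from the representation-theoretic setup in \cite[\S2.2]{MR4076812}, or by a Borel--Weil--Bott computation on $\Gr(2,W^\vee)$ combined with the Koszul resolution of $S$ in $\Gr(2,W^\vee)$. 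Either way, $V=\HH^0(S,\mathcal{U}_2^\vee)^\vee=W^\vee$, so the target of $f$ matches the target prescribed by the corollary.

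The geometric core of the proof is the identification $\mathcal{U}_2^\perp\cong\mathcal{U}_3$. By \eqref{equation:E-perp} the orthogonal bundle fits into
\begin{equation*}
  0\to\mathcal{U}_2^\perp\to W\otimes\mathcal{O}_S\to\mathcal{U}_2^\vee\to 0,
\end{equation*}
which is the dual of \eqref{equation:universal-Gr(2,5)} restricted to $S$. Under the classical isomorphism $\Gr(2,W^\vee)\cong\Gr(3,W)$ sending a $2$-plane to its annihilator, this dualised sequence is precisely the universal sequence \eqref{equation:universal-Gr(3,5)} on $\Gr(3,W)$; since $S$ sits compatibly in both Grassmannians (as recorded in \cref{subsection:resolutions}), we obtain $\mathcal{U}_2^\vee\cong\mathcal{Q}_2$ and hence $\mathcal{U}_2^\perp\cong\mathcal{U}_3$. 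Consequently $\mathbb{P}_S(\mathcal{U}_2^\perp)=Y$, and the morphism to $\mathbb{P}(V^\vee)=\mathbb{P}(W)$ produced by \cref{corollary:absolute-version} is built from the tautological inclusion $\mathcal{U}_3\hookrightarrow W\otimes\mathcal{O}_S$, so it coincides with $g$.

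All that remains is the bookkeeping of Lefschetz data. The decomposition \eqref{equation:lefschetz-decomposition-bundle} for $\mathcal{E}=\mathcal{U}_2$ has length $\rk\mathcal{U}_2=2$ with rectangular centre $p^*(\derived^\bounded(S))$, which is precisely \eqref{equation:lefschetz-for-segre-cubic}; the dual decomposition \eqref{equation:lefschetz-decomposition-orthogonal-bundle} has length $\rk\mathcal{U}_3=3$ starting at twist $-\dim V+\rk\mathcal{U}_2+1=-2$, which is the claimed form of $\derived^\bounded(Y)$. The only non-formal step is the identification $\mathcal{U}_2^\perp\cong\mathcal{U}_3$, and even this reduces to classical Grassmannian duality once \cref{proposition:resolution} is invoked to present $S$ simultaneously inside both Grassmannians.
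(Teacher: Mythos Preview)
Your proposal is correct and follows essentially the same route as the paper: both reduce to \cref{corollary:absolute-version} and identify $\mathcal{U}_2^\perp\cong\mathcal{U}_3$ via the classical Grassmannian duality $\Gr(2,W^\vee)\cong\Gr(3,W)$ before restricting to $S$. Your write-up is in fact more thorough, as you explicitly verify the global generation of $\mathcal{U}_2^\vee$ and the identification $\HH^0(S,\mathcal{U}_2^\vee)\cong W$, points the paper leaves implicit.
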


\begin{proof}
  By the setup in \cref{subsection:hpd-bundles}
  we need to check that~$\mathcal{U}_2$ and~$\mathcal{U}_3$
  are related by taking the orthogonal bundle \eqref{equation:E-perp}.
  But they are restrictions of the universal subbundle
  on the Grassmannians~$\Gr(2,W^\vee)\cong\Gr(3,W)$.
  Before the restriction we have that~$\mathcal{U}_2^\vee$
  on~$\Gr(2,W^\vee)$
  in \eqref{equation:universal-Gr(2,5)}
  is identified with~$\mathcal{Q}_2$
  on~$\Gr(3,W)$
  in \eqref{equation:universal-Gr(3,5)}
  by this isomorphism,
  so that~$\mathcal{U}_2^\perp$
  is identified with~$\mathcal{U}_3$
  by this isomorphism.
  It now suffices to restrict this identification to~$S$,
  and apply \cref{corollary:absolute-version}.
\end{proof}

\paragraph{Classical projective duality}
The motivation for the terminology \emph{homological projective duality} is explained on \cite[page~159]{MR2354207},
at least when the homological projective dual is an honest variety.
Namely if~$X\to\mathbb{P}(V)$ is a closed immersion
(so that it has a projective dual~$X^\vee\subseteq\mathbb{P}(V^\vee)$ in the usual sense),
then we have an equality of subsets of~$\mathbb{P}(V^\vee)$ between
\begin{itemize}
  \item the projective dual $X^\vee$;
  \item the critical locus of the homological projective dual variety~$Y\to\mathbb{P}(V^\vee)$.
\end{itemize}
Here critical locus refers to the complement of the dense open locus where~$Y\to\mathbb{P}(V^\vee)$ is smooth (by generic smoothness).
This allows homological projective duality to be interpreted as a categorification,
or homological version,
of classical projective duality.

More generally,
in \cite[\S7.4]{MR2354207} a definition of~$X^\vee$ is given
when~$X\to\mathbb{P}(V)$ is not necessarily a closed immersion.
This is called the \emph{classical projective dual},
a terminology explained by \cite[Theorem~7.9]{MR2354207}.
This is further generalised to a noncommutative setting in \cite[\S7.5]{MR3948688}.

We can conclude with the following corollary,
which shows that replacing the Segre cubic with a small resolution
in order to apply the theory of homological projective duality
did not change the link with classical projective duality.
Observe that by dimension reasons the critical locus of~$Y\to\mathbb{P}(W)$
is the critical locus of~$\coble\to\mathbb{P}(W)$,
which is~$\igusa$.
\begin{corollary}
  \label{corollary:classical-projective-dual}
  The classical projective dual of~$f\colon X\to\mathbb{P}(W^\vee)$ is the Castelnuovo--Richmond quartic~$\igusa\subseteq\mathbb{P}(W)$.
  The classical projective dual of~$g\colon Y\to\mathbb{P}(W)$ is the Segre cubic~$\segre\subseteq\mathbb{P}(W^\vee)$.
  In particular,
  for all~$L\subseteq W^\vee$ the following are equivalent:
  \begin{itemize}
    \item $X_L$ is singular;
    \item $Y_{L^\perp}$ is singular.
  \end{itemize}
\end{corollary}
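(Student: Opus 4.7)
The plan is to invoke Kuznetsov's identification recalled in the preceding paragraphs (\cite[Theorem~7.9]{MR2354207}, extended to the noncommutative setting in \cite[\S7.5]{MR3948688}) of the classical projective dual of a morphism $X \to \mathbb{P}(V)$ with the critical locus in $\mathbb{P}(V^\vee)$ of its homological projective dual. Combined with \cref{theorem:main-theorem-precise}, this reduces the two equalities of the corollary to direct computations of critical loci for $f$ and $g$.

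For the first equality, \cref{theorem:main-theorem-precise} identifies the homological projective dual of $f\colon X \to \mathbb{P}(W^\vee)$ as $g\colon Y \to \mathbb{P}(W)$. I would factor $g$ as $Y \xrightarrow{\rho} \coble \xrightarrow{\pi} \mathbb{P}(W)$: since $\rho$ is a small resolution (hence an isomorphism in codimension~$1$), the dimension remark recorded just before the statement of the corollary shows that the critical locus of $g$ coincides with the critical locus of $\pi$, which is precisely the branch divisor $\igusa$. For the second equality I would first use the duality statement of \cite[Theorem~6.3]{MR2354207} (reviewed in \cref{section:hpd}) to swap the roles of $X$ and $Y$, obtaining that the homological projective dual of $g$ is $f$; running the same argument on the factorisation $X \xrightarrow{\varpi} \segre \hookrightarrow \mathbb{P}(W^\vee)$ with $\varpi$ again a small resolution, the critical locus will come out to be $\segre$. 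As a sanity check this is consistent with classical biduality $(\segre)^{\vee\vee} = \segre$ of projectively dual hypersurfaces.

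For the equivalence in the last assertion, the key observation is that, by the previous two equalities, $g$ is smooth over $\mathbb{P}(W) \setminus \igusa$ and $f$ is smooth along its image in $\mathbb{P}(W^\vee) \setminus \segre$. Consequently, by base change along the regular closed immersion $\mathbb{P}(L^\perp) \hookrightarrow \mathbb{P}(W)$, if $\mathbb{P}(L^\perp)$ avoids $\igusa$ then $Y_{L^\perp}$ is smooth, and symmetrically for $X_L$. Translating between the conditions $\mathbb{P}(L^\perp) \cap \igusa = \emptyset$ and $\mathbb{P}(L) \cap \segre = \emptyset$ via the classical projective biduality between $\segre$ and $\igusa$ then produces the desired equivalence, at least under the admissibility hypothesis that both linear sections have their expected dimension.

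The step I expect to require the most care is handling the non-admissible case in the final equivalence: there one must interpret $X_L$ and $Y_{L^\perp}$ as the derived fiber products of \cref{remark:derived-fiber-product} and match derived-smoothness on the two sides via the same incidence data with $\segre$ and $\igusa$. For admissible $L$ the statement reduces to a standard transversality argument on the smooth varieties $X$ and $Y$, and no such subtlety arises.
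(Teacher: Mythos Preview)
Your treatment of the first two equalities is correct and matches the paper's reasoning: the paper records exactly your observation that ``by dimension reasons the critical locus of $Y\to\mathbb{P}(W)$ is the critical locus of $\coble\to\mathbb{P}(W)$, which is $\igusa$'', and the dual computation for $f$ proceeds as you indicate.

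The gap is in your argument for the final equivalence. The step ``translating between the conditions $\mathbb{P}(L^\perp)\cap\igusa=\emptyset$ and $\mathbb{P}(L)\cap\segre=\emptyset$ via classical projective biduality'' does not work: these two incidence conditions are simply not equivalent. Take $\dim_kL=4$, so $\mathbb{P}(L)\subset\mathbb{P}(W^\vee)$ is a hyperplane and $\mathbb{P}(L^\perp)$ is a point. Then $\mathbb{P}(L)\cap\segre$ is a cubic surface, hence never empty, while a generic point $\mathbb{P}(L^\perp)$ misses $\igusa$. Biduality $(\segre^\vee)^\vee=\segre$ says nothing about such incidence conditions for complementary linear subspaces. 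Moreover, even granting the two implications you do establish (smoothness of the base change when the linear space avoids the critical locus), these are only \emph{sufficient} conditions for smoothness, not necessary ones: a generic hyperplane section $Y_H$ is smooth (\cref{lemma:quartic-double-solid}) even though $H$ certainly meets $\igusa$. So neither direction of the equivalence follows from your transversality argument.

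The intended argument, implicit in the paper's pointer to \cite[\S4.7]{MR3948688}, is categorical rather than geometric. By \eqref{equation:linear-sections-sod} the categories $\derived^\bounded(X_L)$ and $\derived^\bounded(Y_{L^\perp})$ have semiorthogonal decompositions whose ``ambient'' pieces are copies of $\derived^\bounded(S)$ (hence smooth) and whose remaining piece is the common component $\mathcal{C}_L$. Since categorical smoothness is inherited by and from admissible subcategories, $X_L$ is smooth iff $\mathcal{C}_L$ is smooth iff $Y_{L^\perp}$ is smooth. This works uniformly for all $L$, admissible or not, once one uses the derived fiber products and the notion of smoothness from \cite[\S4.7]{MR3948688}; no separate transversality analysis is needed.
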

Here we take the \emph{derived fiber product} in the sense of derived algebraic geometry,
so that smooth and singular are to be taken in the sense of \cite[\S4.7]{MR3948688}.

\begin{remark}
  We observe that for the equivalence in \cref{corollary:classical-projective-dual}
  it is important that we use resolutions
  and derived fiber products.
  As we will explain in \cref{lemma:smooth-segre-hyperplane-criterion},
  for a point~$H\in\mathbb{P}(W)$
  we have that the corresponding hyperplane section of~$\segre$
  is smooth if and only if it lies outside the union of
  $\igusa$ and the~10~hyperplanes~$P_i$ corresponding to the~10~nodes,
  making the classical projective dual too big.
  But for a point~$H$ of~$P_i\setminus\igusa$
  we have that~$H\cap\coble=Y_{L^\perp}=\{\text{2 points}\}$ is smooth
  (and the fiber product agrees with the derived fiber product).
  By considering the resolution~$X\to\mathbb{P}(W^\vee)$
  instead of~$\segre\subseteq\mathbb{P}(W^\vee)$
  the hyperplane section~$X_H$ is smooth
  as we will explain in \cref{subsection:hyperplane-segre}.

  Secondly,
  for most points in the singular locus of~$\igusa$
  the ordinary fiber product is a smooth rational curve by \cite[Proposition~2.44]{MR4076812}.
  But the relative dimension of the morphism~$Y\to\mathbb{P}(W)$ is~0,
  not~1.
\end{remark}

\begin{remark}
  If~$L$ is admissible in the sense of \cite[Definition~6.2]{MR2354207},
  i.e.~\emph{both} derived fiber products are in fact underived
  because they are of the expected dimension,
  \cite[Theorem~7.12]{MR2354207} gives an identification of the singularity categories
  of~$X\times_{\mathbb{P}(W^\vee)}\mathbb{P}(L)$
  and~$Y\times_{\mathbb{P}(V^\vee)}\mathbb{P}(L^\perp)$.
  If~$L$ is not admissible
  (because at least one fiber product does not have the expected dimension)
  then for the derived fiber product(s) one has to use a notion of singularity category from derived algebraic geometry.
  We will not go into this.
\end{remark}

\subsection{What about other cubic threefolds?}
\label{subsection:other-cubics}
We now discuss some possible variations on the theme of homological projective duality for our chosen resolution of the Segre cubic.

\paragraph{Smooth cubics}
If one would like to study homological projective duality for a \emph{smooth} cubic threefold,
one runs into the following problem.
Let~$X$ denote a smooth cubic threefold for now, then we have the semiorthogonal decomposition
\begin{equation}
  \label{equation:smooth-cubic-threefold-sod}
  \derived^\bounded(X)=\langle\mathcal{R}_X,\mathcal{O}_X,\mathcal{O}_X(1)\rangle
\end{equation}
where~$\mathcal{R}_X$ is a~$\frac{5}{3}$\dash Calabi--Yau category \cite[Corollary~4.1]{MR3987870}.

Whilst it is not clear whether~$\mathcal{R}_X$ is indecomposable,
the decomposition \eqref{equation:smooth-cubic-threefold-sod}
cannot in any way be refined to an interesting Lefschetz decomposition.
By considering the Hodge numbers of~$X$ together with the additivity of Hochschild homology
we see that the longest length of an exceptional collection is~4,
so~$\mathcal{A}_1$ can at most consist of~2~exceptional objects,
hence the interesting component (if such a Lefschetz decomposition exists at all)
consists of~7~exceptional objects with a complicated structure.

If on the other hand we were to take~$\langle\mathcal{R}_X,\mathcal{O}_X\rangle$ as the initial block,
so that~$\mathcal{A}_1=\langle\mathcal{O}_X\rangle$,
then the only contribution to the derived category of a hyperplane section is a single exceptional object.
Hence the homological projective dual is a very complicated object,
and we refer to \cite[\S4]{MR3626552}
for a description of derived categories of hypersurfaces
in terms of gauged Landau--Ginzburg models,
which can be used as a possible starting point for a description
of the interesting component
(which in this case is a~$\frac{4}{3}$\dash Calabi--Yau category consisting of~8 exceptional objects).

One could argue that homological projective duality is a balancing act,
where one tries to make the input data and the resulting homological projective dual
have roughly equal complexity,
so that one can leverage information about one to understand the other.
The resolution of the Segre cubic seems to provide a particularly good solution to this balancing act.

\paragraph{The unresolved Segre cubic}
The smoothness assumption for the initial input which is present in \cite{MR2354207}
has been removed in \cite{MR3948688}.
But currently lacking an interesting Lefschetz decomposition for~$\derived^\bounded(\segre)$
to start the machinery with
we have focused in this article on the smooth case.
It would be interesting to find a similar Lefschetz decomposition for the Segre cubic,
or rule out the existence of one.

\paragraph{Resolutions of singular cubic threefolds with fewer nodes}
If instead of the Segre cubic or a smooth cubic threefold
one wishes to analyse homological projective duality for (a resolution of) a cubic threefold with 1 to 9 nodes,
one can consider the description and classification obtained in \cite{MR1005050}.
A starting point for the description of the derived categories of the singular cubics
is provided by \cite{MR4277855}.


Starting with the 1-nodal case,
by \cite[Proposition~4.6]{MR3050698}
the derived category of a small resolution
has a semiorthogonal decomposition
in terms of the~2 exceptional line bundles~$\mathcal{O}_X,\mathcal{O}_X(1)$
and the derived category of a genus~4 curve.
This brings us in a situation similar to that of the smooth cubic
and the homological projective dual is seemingly a complicated object.

But if the nodal cubic threefold has a \emph{determinantal} presentation,
we can bootstrap from homological projective duality for determinantal varieties \cite{MR3490767}.

Consider first the 6-nodal case,
which is the general determinantal cubic threefold
and corresponds to the case $J_9$ in \cite{MR1005050}.
The derived category of a small resolution has a semiorthogonal decomposition
in terms of the~2 exceptional line bundles~$\mathcal{O}_X,\mathcal{O}_X(1)$
and~4~additional exceptional objects \cite[Remark~6.10]{MR3490767}.
Assume for now that we can turn these~6~exceptional objects into a rectangular Lefschetz decomposition,
then the theory provides~3~exceptional objects for the derived category of a hyperplane section.
A general section is a smooth cubic surface,
for which a full exceptional collection contains~9~exceptional objects.
Therefore the general fiber of the homological projective dual~$X^\natural\to\mathbb{P}(V^\vee)$
consists of~6~exceptional objects.

We can make this prediction precise by the following proposition,
which also covers cases with more nodes.
We recall the setup for homological projective duality for determinantal hypersurfaces from \cite[\S5, \S6.3]{MR3490767}
for this specific instance.
\begin{setup}
  Let~$A$ and~$B$ be~3\dash dimensional vector spaces,
  and set~$V\colonequals A\otimes_kB$.
  Let~$X\subset\mathbb{P}(V^\vee)=\mathbb{P}^8$ be the cubic determinantal~7\dash fold 
  corresponding to the locus of~$3\times 3$-matrices of rank at most~2.
  Let~$\widetilde{X}\to X$ be its Springer resolution,
  given as the projectivisation of~$\Omega_{\mathbb{P}(A)}^1(1)\otimes_kB$.
  We will consider the composition
  \begin{equation}
    f\colon\widetilde{X}\to\mathbb{P}(V^\vee).
  \end{equation}
  On the dual side we let
  \begin{equation}
    g\colon Y=\mathbb{P}^2\times\mathbb{P}^2=\mathbb{P}(A)\times\mathbb{P}(B)\hookrightarrow\mathbb{P}(V) 
  \end{equation}
  be the Segre embedding.
  These have rectangular Lefschetz decompositions
  \begin{equation}
    \begin{aligned}
      \derived^\bounded(\widetilde{X})&=\langle\mathcal{A}_0,\mathcal{A}_1(1),\mathcal{A}_2(2),\mathcal{A}_3(3),\mathcal{A}_4(4),\mathcal{A}_5(5)\rangle \\
      \derived^\bounded(Y)&=\langle\mathcal{B}_2(-2),\mathcal{B}_1(-1),\mathcal{B}_0\rangle
    \end{aligned}
  \end{equation}
  with Lefschetz center~$\mathcal{A}_0=\mathcal{B}_0=\derived^\bounded(\mathbb{P}(A))$,
  and by \cite[Theorem~3.5]{MR3490767} we have homological projective duality
  for~$\widetilde{X}$ and~$Y$ with respect to these choices.
  This result is another instance of homological projective duality for projective bundles
  given in \cref{subsection:hpd-bundles}.
\end{setup}

We need to reduce the case of cubic in~$\mathbb{P}^8$
to that of a cubic in~$\mathbb{P}^4$,
and deduce homological projective duality for this linear section.
\begin{proposition}
  Let~$L\subset V^\vee$ be a~5\dash dimensional linear subspace,
  such that~$Y\cap\mathbb{P}(L^\perp)=\emptyset$,
  and hence~$X\cap\mathbb{P}(L)$ is a 6-nodal determinantal cubic threefold.

  The linear projection~$g_L\colon Y\to\mathbb{P}(L^\vee)$
  is homological projective dual to the restriction~$f_L\colon\widetilde{X}_L\to\mathbb{P}(L)$,
  where~$\derived^\bounded(Y)$ and~$\derived^\bounded(\widetilde{X}_L)$
  are equipped with the rectangular Lefschetz decompositions
  \begin{equation}
    \begin{aligned}
      \derived^\bounded(\widetilde{X}_L)&=\langle\mathcal{A}_0,\mathcal{A}_1(1)\rangle \\
      \derived^\bounded(Y)&=\langle\mathcal{B}_2(-2),\mathcal{B}_1(-1),\mathcal{B}_0\rangle
    \end{aligned}
  \end{equation}
  with~$\mathcal{A}_0=\mathcal{B}_0=\derived^\bounded(\mathbb{P}^2)$.

\end{proposition}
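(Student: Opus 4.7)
The plan is to recognise the proposition as another direct instance of homological projective duality for projective bundles (\cref{corollary:absolute-version}), by exhibiting $\widetilde{X}_L$ and $Y$ as orthogonal projective bundles over $S = \mathbb{P}(A)$. Recall that the ambient duality of \cite[Theorem~3.5]{MR3490767} is itself obtained this way from the rank-$6$ bundle $\mathcal{E} \coloneqq \Omega^1_{\mathbb{P}(A)}(1) \otimes_k B$ on $S$, whose orthogonal $\mathcal{E}^\perp \cong B \otimes \mathcal{O}_S(-1)$ produces $Y = \mathbb{P}_S(\mathcal{E}^\perp)$. The fibre $\mathcal{E}^\perp_{[a]} \subset V = A \otimes_k B$ is the rank-one tensor subspace $\langle a\rangle \otimes_k B$, so the points of $Y$ lying above $[a]$ are precisely the projectivisations of vectors in $\mathcal{E}^\perp_{[a]}$. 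I then define $\mathcal{E}_L \coloneqq \ker\bigl(\mathcal{E} \to (V^\vee/L) \otimes \mathcal{O}_S\bigr)$ as the candidate defining bundle for $\widetilde{X}_L$.

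The crucial step is to translate the hypothesis $Y \cap \mathbb{P}(L^\perp) = \emptyset$ into surjectivity of the evaluation $\mathcal{E} \to (V^\vee/L)\otimes\mathcal{O}_S$. At $[a] \in S$, non-surjectivity is equivalent to $\mathcal{E}_{[a]} + L \subsetneq V^\vee$, hence to $\mathcal{E}^\perp_{[a]} \cap L^\perp \neq 0$, which by the fibrewise description of $\mathcal{E}^\perp$ means the existence of a non-zero pure tensor $a \otimes b \in L^\perp$, i.e.\ $[a \otimes b] \in Y \cap \mathbb{P}(L^\perp)$. Under the hypothesis, $\mathcal{E}_L$ is therefore a rank-$2$ subbundle of $L\otimes\mathcal{O}_S$, giving the identification $\widetilde{X}_L = \mathbb{P}_S(\mathcal{E}_L)$. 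Dualising the defining sequence of $\mathcal{E}_L$ and using $\HH^1(\mathbb{P}^2,\mathcal{O}) = 0$ identifies $\HH^0(S,\mathcal{E}_L^\vee)^\vee$ with $L$, so $f_L$ coincides with the canonical morphism from the projective bundle to $\mathbb{P}(L)$.

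Applying \cref{corollary:absolute-version} to $(S, \mathcal{E}_L)$ then produces as homological projective dual $\mathbb{P}_S(\mathcal{E}_L^\perp) \to \mathbb{P}(L^\vee)$. The natural map $\mathcal{E}^\perp \to \mathcal{E}_L^\perp$ induced by the projection $V \twoheadrightarrow V/L^\perp = L^\vee$ is injective by the fibrewise condition just proved, and the two bundles have equal rank, so it is an isomorphism; this identifies $\mathbb{P}_S(\mathcal{E}_L^\perp) = Y$ and shows the induced morphism $Y \to \mathbb{P}(L^\vee)$ factors as $Y \hookrightarrow \mathbb{P}(V) \dashrightarrow \mathbb{P}(L^\vee)$, namely as the linear projection $g_L$ with centre $\mathbb{P}(L^\perp)$. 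The two rectangular Lefschetz decompositions in the statement are then Orlov's projective bundle decompositions for the $\mathbb{P}^1$-bundle $\widetilde{X}_L \to S$ and the $\mathbb{P}^2$-bundle $Y \to S$, with common Lefschetz centre $\derived^\bounded(\mathbb{P}(A)) = \derived^\bounded(\mathbb{P}^2)$. The main obstacle is bookkeeping: the various dualities ($L$, $L^\perp$, $L^\vee$) and the commutative diagrams connecting $\mathcal{E}^\perp$, $\mathcal{E}_L^\perp$ and their defining sequences need to be set up so that the identification $\mathcal{E}^\perp \cong \mathcal{E}_L^\perp$ really produces $g_L$ on the nose, rather than its composition with some linear automorphism.
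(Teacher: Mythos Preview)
Your argument is correct and gives a genuinely different proof from the one in the paper. The paper invokes the general theorem on homological projective duality for linear sections with base locus (\cite[Theorem~1.1]{MR4176839}, or \cite[\S A.2]{MR4280866} in the base-locus-free case): once one knows that HPD is compatible with passing to linear subsystems, the hypothesis $Y_{L^\perp}=\emptyset$ immediately gives $\Bl_{Y_{L^\perp}}Y=Y$ and the result follows. You instead bypass that black box entirely and re-apply \cref{corollary:absolute-version} to the rank-$2$ subbundle $\mathcal{E}_L\subset L\otimes\mathcal{O}_S$, which is a more hands-on and self-contained route using only the ingredients already developed in the paper.

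Your translation of the hypothesis $Y\cap\mathbb{P}(L^\perp)=\emptyset$ into fibrewise surjectivity of $\mathcal{E}\to(V^\vee/L)\otimes\mathcal{O}_S$ is exactly right, and the identification $\mathcal{E}^\perp\cong\mathcal{E}_L^\perp$ via equal-rank injectivity is clean. One small point you do not state explicitly: to apply \cref{corollary:absolute-version} you need $\mathcal{E}_L^\vee$ globally generated, but this is immediate since it is a quotient of the globally generated $\mathcal{E}^\vee$. The bookkeeping worry you flag about $g_L$ being recovered on the nose is handled by your commutative diagram: the composite $\mathcal{E}^\perp\hookrightarrow V\otimes\mathcal{O}_S\twoheadrightarrow L^\vee\otimes\mathcal{O}_S$ is precisely the inclusion $\mathcal{E}_L^\perp\hookrightarrow L^\vee\otimes\mathcal{O}_S$ defining the morphism to $\mathbb{P}(L^\vee)$, so the induced map on projectivisations is the linear projection from $\mathbb{P}(L^\perp)$.

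The trade-off is clear: the paper's proof is shorter but imports a substantial external result, while yours stays within the projective-bundle framework at the cost of more explicit linear algebra. In effect you are reproving the relevant special case of \cite{MR4176839} by hand.
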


\begin{proof}
  This follows from applying homological projective duality for linear systems with a base locus \cite[Theorem~1.1]{MR4176839}
  (see also \cite[\S A.2]{MR4280866} for an abstract version with empty base locus),
  with the roles of~$\widetilde{X}$ and~$Y$ reversed in the notation of op.~cit.
  The linear section~$\widetilde{X}_L$ is the crepant resolution of a determinantal cubic threefold,
  and~$Y_{L^\perp}=\emptyset$ by our assumption.
  These are smooth,
  and satisfy the expected dimension condition,
  hence we obtain the stated homological projective duality for~$\widetilde{X}_L$
  and~$Y=\Bl_{Y_{L^\perp}}Y$.
\end{proof}

We have that the linear projection~$Y=\mathbb{P}^2\times\mathbb{P}^2\to\mathbb{P}(L)=\mathbb{P}^4$
is a~$6:1$-cover,
ramified along the classical projective dual of the cubic.
By the Pl\"ucker--Teissier formula \eqref{equation:plucker-teissier}
this is a hypersurface of degree~12.
Because this is a~$6:1$\dash cover,
we have that a smooth hyperplane section of~$\widetilde{X}$
has~3~exceptional objects coming from the Lefschetz center~$\mathcal{A}_0=\derived^\bounded(\mathbb{P}^2)$,
and~6~orthogonal exceptional objects from the cover,
which is an incarnation of Orlov's blowup formula
for the smooth cubic surface~$\widetilde{X}_L\cong\Bl_6\mathbb{P}^2$.

We can also consider a general plane section of~$\widetilde{X}_L$,
which is a smooth cubic curve.
On the dual side we have a line section of~$Y_{L^\perp}$,
which is a~$6:1$\dash cover of~$\mathbb{P}^1$
ramified along~12~points,
and by the Riemann--Hurwitz formula this is a curve of genus~1.
Homological projective duality thus provides
an equivalence of categories for these curves by \eqref{equation:linear-sections-sod}.

\section{Applying homological projective duality}
\label{section:linear-sections}
In this section we will describe linear sections of the resolved Segre cubic
and its homological projective dual.
This illustrates how the abstract machinery describes
derived categories of linear sections,
which in this setting have an explicit and classical description.
We will describe two methods for this:
\begin{itemize}
  \item a bottom-up approach,
    starting from a linear section of~$\segre$ resp.~$\coble$,
    and then understanding how the resolution affects the description;
  \item a top-down approach,
    starting from a linear section of~$X$ resp.~$Y$,
    and then understanding how the restriction of the resolution can be interpreted.
\end{itemize}

We will be exhaustive in our discussion of
hyperplane sections of the (resolved) Segre cubic in \cref{subsection:hyperplane-segre},
but restrict ourselves to only discussing
some interesting examples in the other settings in \cref{subsection:other-linear-sections},
as the amount of cases to be covered in a complete case-by-case analysis is large.
For the analysis we need to understand both
the projective geometry of $\segre$, $\igusa$ and $\coble$
(which we recall in \cref{subsection:projective})
and the geometry of the resolutions defined in \cref{subsection:resolutions}.

Before we do this we will explain
the recipe of describing the derived category in the ideal situation,
when the resolutions do not play a role in the description.

\begin{definition}
  We will say that~$L\subseteq W^\vee$ of dimension~$2,3,4$ is \emph{generic} if
  \begin{itemize}
    \item $\mathbb{P}(L)\cap\segre$ is smooth of dimension~$\dim_kL-2$,
      and it avoids the singular locus of~$\segre$;
    \item $\mathbb{P}(L^\perp)\cap\igusa$ is smooth of dimension~$3-\dim_kL$,
      and it avoids the singular locus of~$\igusa$.
  \end{itemize}
\end{definition}
What happens in this case is that~$\mathbb{P}(L)\cap\segre\cong X_L$
and~$\mathbb{P}(L^\perp)\times_{\mathbb{P}(W)}\coble\cong Y_{L^\perp}$.
This makes describing the output of homological projective duality
using the bottom-up approach straightforward.
If~$\dim_kL=1$ then~$\mathbb{P}(L^\perp)\cap\igusa$ always hits
the singular locus of the Castelnuovo--Richmond quartic
as we will explain in \cref{subsection:projective},
so there is no generic~$L\subseteq W^\vee$ in this case.

We have summarised the description in \cref{table:generic}.
Let us explain what is written there.
If~$\dim_kL=2$,
then on the Segre side we see that~$X_L$ consists of~3 points,
whilst on the Coble side we obtain a del Pezzo surface~$Y_{L^\perp}$ of degree~2
as the double cover of~$\mathbb{P}(L^\perp)\cong\mathbb{P}^2$
ramified in the smooth quartic curve~$\mathbb{P}(L^\perp)\cap\igusa$
which is isomorphic to the blowup~$\mathrm{Bl}_7\mathbb{P}^2$
of~7~points in general position.
By \eqref{equation:linear-sections-sod} we get
\begin{equation}
  \begin{aligned}
    \derived^\bounded(X_L)&=\mathcal{C}_L=\langle E_1,E_2,E_3\rangle \\
    \derived^\bounded(Y_{L^\perp})&=\langle\mathcal{B}_1(-1),\mathcal{C}_L\rangle=\langle\derived^\bounded(S), E_1,E_2,E_3\rangle.
  \end{aligned}
\end{equation}
Here~$E_1,E_2,E_3$ are completely orthogonal exceptional objects.
In fact,
the composition~$Y_{L^\perp}\hookrightarrow Y\twoheadrightarrow S$
obtained similarly to \eqref{equation:sasha-composition}
exhibits~$Y_{L^\perp}$ as~$\operatorname{Bl}_3S$,
and thus the semiorthogonal decomposition induced by homological projective duality
can be interpreted as an instance of Orlov's blowup formula.

If~$\dim_kL=3$,
then on the Segre side we obtain a smooth cubic curve~$X_L$,
whilst on the Coble side we obtain a double cover~$Y_{L^\perp}$
of~$\mathbb{P}(L^\perp)\cong\mathbb{P}^1$
ramified in the~4 points~$\mathbb{P}(L^\perp)\cap\igusa$,
which is \emph{also} a curve of genus~1.
By \eqref{equation:linear-sections-sod} we get
\begin{equation}
  \derived^\bounded(X_L)\cong\derived^\bounded(Y_{L^\perp})
\end{equation}
as there are no contributions by the Lefschetz components on either side.
By the reconstruction of curves from their derived category (see e.g.~\cite[Corollary~5.46]{MR2244106})
we even obtain~$X_L\cong Y_{L^\perp}$.

If~$\dim_kL=4$,
then on the Segre side we obtain
a smooth cubic surface,
whilst on the Coble side we see that~$Y_{L^\perp}$ consists of~2 points,
as~$\mathbb{P}(L^\perp)\cap\igusa=\emptyset$.
By \eqref{equation:linear-sections-sod} we get
\begin{equation}
  \begin{aligned}
    \derived^\bounded(X_L)&=\langle\mathcal{C}_L,\mathcal{A}_1(1)\rangle=\langle E_1,E_2,\derived^\bounded(S)\rangle \\
    \derived^\bounded(Y_{L^\perp})&=\mathcal{C}_L=\langle E_1,E_2\rangle.
  \end{aligned}
\end{equation}
Here~$E_1,E_2$ are completely orthogonal exceptional objects.
As for the case of~$\dim_kL=2$
we can consider the composition~$X_L\hookrightarrow X\twoheadrightarrow S$
obtained similarly to \eqref{equation:sasha-composition}
and this exhibits~$X_L$ as~$\operatorname{Bl}_2S$,
so the same comment as for~$Y_{L^\perp}$ with~$\dim_kL=2$ applies.

\begin{table}
  \centering
  \begin{adjustbox}{center}
    \begin{tabular}{cc|ccc}
      \toprule
      $\dim_k L$    & $\mathbb{P}(L)\cap\segre$                    & $\mathbb{P}(L^\perp)\cap\igusa$ & $\mathbb{P}(L^\perp)\times_{\mathbb{P}(W)}\coble$ \\
                    & $\derived^\bounded(\mathbb{P}(L)\cap\segre)$ &                                 & $\derived^\bounded(\mathbb{P}(L^\perp)\times_{\mathbb{P}(W)}\coble)$ \\
      \midrule
      2             & 3 points                                     & smooth quartic curve            & double cover of $\mathbb{P}^2$ ramified in quartic \\
                    & 3 orthogonal objects                         &                                 & $10=7+3$ exceptional objects \\
      \addlinespace
      3             & smooth cubic curve $C$                       & 4 points                        & double cover $C$ of $\mathbb{P}^1$ ramified in 4 points \\
                    & $\derived^\bounded(C)$                       &                                 & $\derived^\bounded(C)$ \\
      \addlinespace
      4             & smooth cubic surface                         & $\emptyset$                     & 2 points \\
                    & $9=7+2$ exceptional objects                  &                                 & 2 orthogonal objects \\
      \bottomrule
    \end{tabular}
  \end{adjustbox}
  \caption{Description of the generic situation}
  \label{table:generic}
\end{table}

In what follows we will discuss what happens in the \emph{non-generic} situation,
and explain how the resolutions~$\varpi\colon X\to\segre$ and~$\rho\colon Y\to\coble$
change the resulting descriptions of (some) linear sections and their derived categories.

\subsection{The projective geometry of \texorpdfstring{$\segre$}{the Segre cubic}, \texorpdfstring{$\igusa$}{the Castelnuovo--Richmond quartic} and \texorpdfstring{$\coble$}{the Coble fourfold}}
\label{subsection:projective}
We will now discuss the geometry of the Segre cubic,
Castelnuovo--Richmond quartic and Coble fourfold in more detail,
to prepare for the description and (partial) classification of
linear sections of~$X$ and~$Y$.

\paragraph{Segre cubic}
The Segre cubic contains ten nodes $p_1, \dots, p_{10}$,
which are the $\sym_6$-orbit of the point $(1:1:1:-1:-1:-1)$.
Moreover,
$\segre$ contains exactly 15 planes,
called \emph{Segre planes}, defined as
\begin{equation}
  P_\sigma\colonequals\{x_{\sigma(0)} + x_{\sigma(3)}=x_{\sigma(1)}+ x_{\sigma(4)}=x_{\sigma(2)} + x_{\sigma(5)}=0\}
\end{equation}
for $\sigma \in \sym_6$.
No three of the ten nodes are collinear,
and each Segre plane contains exactly four of the ten nodes.
Each node is contained in exactly six Segre planes.
This gives a~$(15_4,10_6)$-configuration.

Let us also mention two sets of hyperplanes in~$\mathbb{P}^5$
(which define hyperplanes in~$\mathbb{P}(W^\vee)$).
The first set is given by the~15~hyperplanes
\begin{equation}
  T_{i,j}\colonequals\{x_i - x_j=0\}\qquad 0\leq i<j\leq 5,
\end{equation}
whose intersection with $\segre$ is the Cayley cubic surface,
the unique cubic surface with~4 nodes.
The other set of~15~hyperplanes is given by
\begin{equation}
  H_{i,j}\colonequals\{x_i+x_j=0\}\qquad 0\leq i<j\leq 5,
\end{equation}
which have the property that they each contain exactly three of the fifteen Segre planes
so that their intersection with~$\segre$ is the union of three projective planes.

\paragraph{Castelnuovo--Richmond quartic}
The Castelnuovo--Richmond quartic $\igusa$ is singular along fifteen lines~$\ell_1,\ldots,\ell_{15}$.
The singular set of $\igusa[\sing]$ consists of fifteen points $q_1,\ldots,q_{15}$ dual to the~$H_{i,j}$.
and each~$q_i$ is contained in 3~lines,
so that each~$\ell_i$ intersects~6~other lines.
This makes up the \emph{Cremona--Richmond configuration},
which has type~$(15_3,15_3)$.

The Segre cubic and the Castelnuovo--Richmond quartic
are projectively dual in the classical sense
and we will use the duality throughout the next section.
We will therefore elaborate here a bit about it.

There are ten hyperplanes~$P_i \subseteq \mathbb{P}(W)$
which correspond to hyperplanes containing the nodes~$p_i\in\segre$.
The hyperplanes~$P_i$ intersect the Castelnuovo--Richmond quartic
in a smooth quadric $Q_i$ with non-reduced structure.

The fifteen singular lines of $\igusa$ are exactly the projective duals of the fifteen Segre planes.
Their fifteen intersection points are dual to the fifteen hyperplanes~$H_{i,j}\subseteq\mathbb{P}(W)$.


We can also describe the (rational) duality map
\begin{equation}
  d \colon \segre \dashrightarrow \igusa.
\end{equation}
The map $d$ restricted to the complement of the fifteen Segre planes
is an isomorphism onto the complement of the ten hyperplanes $P_i$, i.e.
\begin{equation}
  \label{equation:projective-duality-isomorphism}
  d\colon \segre \setminus \bigcup_{\sigma\in\sym_6} P_\sigma \overset{\sim}{\to} \igusa \setminus \bigcup_{i=1,\ldots,10}P_i,
\end{equation}
see \cite[Section 3.3.4]{MR1438547}.
There are several implications of this, from which we mention two.

Firstly,
for two points $x\neq y\in \segre$
the tangent hyperplanes $\tangent_x \segre, \tangent_y \segre$
agree if and only if both points are contained in a common $P_\sigma \subset \segre$
and the line joining $x$ and $y$ passes through one of the four nodes contained in $P_\sigma$.
Secondly,
any tangent hyperplane $\tangent_x\segre$
at a point $x\in \segre \setminus \bigcup_{\sigma\in\sym_6} P_\sigma$ does not contain a node.

\paragraph{Coble fourfold}
Recall that the Coble fourfold $\coble$ was defined as
the double cover $\pi \colon \coble \to \mathbb{P}^4$
branched along the Castelnuovo--Richmond quartic~$\igusa$.
It follows from the definition that the singular locus of $\coble$
is isomorphic to the singular locus of $\igusa$.


\subsection{Hyperplane sections of the (resolved) Segre cubic}
\label{subsection:hyperplane-segre}
We can now describe hyperplane sections of~$X$
and their derived categories
using the analysis of the projective geometry of~$\segre$ and~$\igusa$
and their resolutions.

\paragraph{Bottom-up approach}
For this approach we consider hyperplane sections of~$\segre$,
we have to explain which type of singularities a hyperplane section can obtain,
and then how the restriction of the resolution interacts with the hyperplane section.
Depending on the origin of the singularity,
the resolution will interact differently with the hyperplane section.
We have summarised the conclusions in \cref{table:segre-hyperplane}.

We write~$H=\mathbb{P}(L)\subset\mathbb{P}(W^\vee)$,
where~$L\subset W^\vee$ is a codimension-one subspace,
whilst we denote the projective dual by~$h=\mathrm{pt}=\mathbb{P}(L^\perp)\in\mathbb{P}(W)$,
and we will write
\begin{equation}
  X_H\colonequals X\times_{\mathbb{P}(W^\vee)}\mathbb{P}(L),
  \qquad Y_h\colonequals Y\times_{\mathbb{P}(W^\vee)}\mathbb{P}(L^\perp).
\end{equation}
These are denoted~$X_L$ and~$Y_{L^\perp}$ in \cref{section:hpd}.

\begin{lemma}
  \label{lemma:smooth-segre-hyperplane-criterion}
  A hyperplane section $H\cap \segre\subset \mathbb{P}(W^\vee)$ is smooth
  if and only if
  \begin{equation}
    h\in \mathbb{P}(W)\setminus \left( \bigcup_{i=1,\ldots,10} P_i \cup \igusa \right).
  \end{equation}
\end{lemma}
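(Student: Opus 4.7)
The statement is a direct application of the standard smoothness criterion for hyperplane sections of a hypersurface together with the classical projective duality between $\segre$ and $\igusa$ already recalled in \cref{subsection:projective}. I would proceed in two steps, corresponding to the two ways in which a point $x \in H \cap \segre$ can fail to be a smooth point of the intersection.

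\textbf{Step 1: singularities inherited from $\segre$.} I would first observe that $H \cap \segre$ is automatically singular at every point $x \in H \cap \segre[\sing]$. Since $\segre[\sing] = \{p_1, \ldots, p_{10}\}$ consists of the ten nodes, this case contributes exactly those $H$ that contain at least one node. By the standard pairing between points of $\mathbb{P}(W^\vee)$ and hyperplanes of $\mathbb{P}(W)$, the condition $p_i \in H$ is equivalent to $h \in P_i$, so Step~1 produces the locus $\bigcup_{i=1}^{10} P_i \subset \mathbb{P}(W)$.

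\textbf{Step 2: tangency at a smooth point of $\segre$.} For $x \in \segre \setminus \segre[\sing]$, the hyperplane section $H \cap \segre$ is singular at $x$ if and only if $H$ equals the projective tangent hyperplane $\tangent_x \segre$. By the very definition of the classical projective dual, the locus of such hyperplanes (closure of tangent hyperplanes at smooth points) is $\segre^\vee \subset \mathbb{P}(W)$, and as recalled in \cref{subsection:projective} this projective dual is exactly the Castelnuovo--Richmond quartic $\igusa$. So Step~2 contributes the locus $\igusa \subset \mathbb{P}(W)$.

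Taking the union of the two loci and passing to its complement then yields the claimed criterion. The only nontrivial input is the identification $\segre^\vee = \igusa$, which has already been established classically and is not something to redo here; everything else is the elementary hypersurface smoothness criterion and a translation via the duality between points and hyperplanes. The only minor point worth mentioning explicitly is that when $x$ is a node contained in $H$ the projective tangent cone of $\segre$ at $x$ is a rank-$4$ quadric cone, so $x$ is \emph{always} singular on $H \cap \segre$ regardless of how $H$ passes through $x$; this ensures that Step~1 is not redundant with Step~2 and that no node contributes a hidden condition to $\igusa$.
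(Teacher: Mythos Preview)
Your proposal is correct and follows essentially the same approach as the paper's own proof: both identify the two sources of singularity for a hyperplane section (inheriting a node $p_i$, which gives the $P_i$ locus, and tangency at a smooth point, which gives the dual variety $\igusa$). The paper compresses this into two sentences under the heading ``classical projective duality for varieties with isolated nodal singularities'', while you spell out the two steps and add the remark on the tangent cone at a node; the underlying argument is the same.
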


\begin{proof}
  This follows from classical projective duality for varieties with isolated nodal singularities.
  Any hyperplane $h$ which lies in $P_i$ for some $i$
  will inherit the node $p_i$
  and any hyperplane $h\in \igusa$ will be tangent to $\segre$ at some point.
\end{proof}

The restriction of the resolution~$\rho$ in this case is an isomorphism and one obtains a smooth cubic surface.
This is precisely the generic situation already discussed in \cref{subsection:projective}.

The following two lemmas describe the cases
in which there is a single isolated singularity.

\begin{lemma}
  \label{lemma:one-nodal-hyperplane-segre}
  A hyperplane section $H\cap \segre\subset\mathbb{P}(W^\vee)$ is a one-nodal cubic surface
  if and only if
  $h\in P_i\setminus (\igusa \cup\bigcup_{j\neq i} P_j)$ or $h \in \igusa \setminus \bigcup_{i=1,\ldots,10} P_i$.
\end{lemma}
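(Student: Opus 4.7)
The plan is to classify the singularities of the hyperplane section $H \cap \segre$ and count them. A point $x \in H \cap \segre$ is singular in $H \cap \segre$ in exactly one of two ways: either (a) $x$ is one of the ten nodes $p_i$ of $\segre$ and $p_i \in H$, or (b) $x$ is a smooth point of $\segre$ at which $H$ coincides with the tangent hyperplane $\tangent_x \segre$. Case~(a) happens precisely when $h \in P_i$ for the corresponding index $i$, and case~(b) on the open range $h \in \igusa \setminus \bigcup_j P_j$ produces a unique tangency point $x = d^{-1}(h)$ via the duality isomorphism~\eqref{equation:projective-duality-isomorphism}.

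The second step is to determine the type of each singularity. At a node $p_i$, the tangent cone of $\segre$ is a smooth quadric surface in the projectivised tangent space, and $p_i$ is a node of $H \cap \segre$ precisely when $H$ cuts this tangent cone in a smooth conic. By duality, the failure of this condition is equivalent to $h$ lying on the quadric $Q_i \subset P_i$ of hyperplanes tangent to the tangent cone of $\segre$ at $p_i$, and the relation $P_i \cap \igusa = 2 Q_i$ recorded in \cref{subsection:projective} translates this into the condition $h \in P_i \cap \igusa$. Hence $p_i$ is a node of $H \cap \segre$ if and only if $h \in P_i \setminus \igusa$. In case~(b), the duality map $d$ is an isomorphism in a neighbourhood of $h \in \igusa \setminus \bigcup_j P_j$, which forces the tangency of $H$ to $\segre$ at $x$ to be ordinary, and equivalently the singularity of $H \cap \segre$ at $x$ to be a node.

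Both directions of the lemma then follow by counting. If $h \in P_i \setminus (\igusa \cup \bigcup_{j \neq i} P_j)$, the section inherits exactly the one node $p_i$ from case~(a) and no singularity from case~(b); if instead $h \in \igusa \setminus \bigcup_j P_j$, then by the second implication of projective duality recalled in \cref{subsection:projective} no node of $\segre$ lies on $H$, and case~(b) produces the unique node $d^{-1}(h)$. Conversely, if $H \cap \segre$ is one-nodal then its unique singularity arises from (a) or (b); any additional incidence of $h$ such as $h \in P_i \cap P_j$ with $i \neq j$ or $h \in P_i \cap \igusa$ would yield a second node or a worse-than-node singularity at $p_i$, violating one-nodality. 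The two disjoint possibilities in the statement exactly describe the remaining configurations.

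The main obstacle I anticipate is the bookkeeping around case~(a), specifically verifying that $h \in P_i \cap \igusa$ produces a singularity strictly worse than a node at $p_i$, rather than accidentally yielding a different ordinary double point. This uses the scheme-theoretic equality $P_i \cap \igusa = 2 Q_i$ together with the local identification of $Q_i$ with the variety of hyperplanes tangent to the tangent cone of $\segre$ at $p_i$; both are standard ingredients but need to be combined carefully to exclude degenerate intersections that could still leave a nodal singularity.
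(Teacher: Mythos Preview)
Your approach is essentially the paper's: classify singular points of $H\cap\segre$ as either inherited nodes~$p_i$ (controlled by $h\in P_i$) or new tangency points (controlled by $h\in\igusa$), then analyse each type. Your use of $P_i\cap\igusa=2Q_i$ to detect when the tangent cone section degenerates, and of the immersivity of $d$ to force the tangency-type singularity to be nodal, are in fact more explicit than the paper on those two points.

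There is one small lacuna. When you invoke the duality isomorphism~\eqref{equation:projective-duality-isomorphism} to conclude that for $h\in\igusa\setminus\bigcup_j P_j$ there is a \emph{unique} tangency point $x=d^{-1}(h)$, the isomorphism only guarantees uniqueness among smooth points of $\segre$ lying off the Segre planes. You still need to rule out the possibility that $H$ is tangent to $\segre$ at some smooth point $x'\in P_\sigma$. The paper handles this with the observation that a hyperplane tangent to $\segre$ at a smooth point of $P_\sigma$ must contain all of $P_\sigma$, hence four nodes, forcing $h\in\bigcup_i P_i$ and giving a contradiction. Once you insert that one sentence your argument is complete.
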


\begin{proof}
  Let us first show that these two cases yield one-nodal cubic surfaces.

  In the first case,
  the point $h$ will lie in some $P_i$
  and therefore the intersection $\segre \cap H$ contains the node $p_i$.
  Since we assume that the hyperplane $H$ contains no other node,
  we also deduce that it cannot be tangent at a point contained in a Segre plane,
  since such a hyperplane would contain the Segre plane and therefore four nodes.
  Moreover,
  the duality map restricts to an isomorphism between $\segre \setminus \bigcup_{\sigma\in\sym_6} P_\sigma$
  and $\igusa \setminus \bigcup_{i=1,\ldots,10} P_i$
  and therefore the hyperplane $h\in P_i$ cannot be tangent at a point not contained in a Segre plane.
  To conclude,
  since the hyperplane $h$ does not lie on~$\igusa$,
  the hyperplane $H$ does not acquire a singularity worse than a node at the point $p_i$.

  This also shows that a tangent hyperplane~$\tangent_p\segre$
  at a point $p \in \segre \setminus \bigcup_{\sigma\in\sym_6} P_\sigma$
  can never contain a node $p_i$ and is only tangent at the point $p$.
  Indeed, the duality map shows that $\tangent_p\segre$ at a point $p\in \segre \setminus \bigcup_\sigma P_\sigma$
  does not contain any of the ten nodes of the Segre cubic.
  Since the duality map is an isomorphism when restricted to the complement of the Segre planes,
  we infer that it will only be tangent at the point $p$.

  Conversely,
  if a hyperplane section is a one-nodal cubic surface,
  then it must intersect the Segre cubic non-transversely or contain a node.

  To finish the proof,
  just note that a hyperplane $H$ intersecting the Segre cubic at a general point $p\in P_\sigma$ not transversely
  will contain the whole Segre plane $P_\sigma \subset H$.
\end{proof}

The two cases in \cref{lemma:one-nodal-hyperplane-segre} behave differently with respect to the resolution~$\rho$.
In the latter case,
the resolution is an isomorphism and we have a 1-nodal cubic surface.
In the first case,
we blow up the node of the cubic surface
and end up with a weak del Pezzo surface.

On the dual side, the picture is also twofold.
In the latter case,
the $h$ is contained in the smooth locus of the Castelnuovo--Richmond quartic and
\begin{equation}
  Y_h = \coble\times_{\mathbb{P}(W)} h = \Spec k[\epsilon]/(\epsilon^2).
\end{equation}
In the first case~$h\in P_i$ does not lie on $\igusa$.
Therefore
\begin{equation}
  Y_h = \{\text{2 points}\}.
\end{equation}

The last possible case for a hyperplane section $H\cap \igusa$ to have exactly one isolated singularity is the following.
\begin{lemma}
  The hyperplane section $H\cap \segre\subset\mathbb{P}(W^\vee)$ is a cubic surface with an isolated $\mathrm{A}_2$-singularity
  if and only if
  $h\in(\igusa\cap P_i)\setminus \bigcup_{j\neq i} P_j$.
\end{lemma}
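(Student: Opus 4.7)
The proof splits into the two implications. I would prove the forward direction by combining a geometric reduction with a local normal-form computation at the node $p_i$, and the converse by elimination using \cref{lemma:smooth-segre-hyperplane-criterion,lemma:one-nodal-hyperplane-segre}.

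For ``$\Leftarrow$'', suppose $h\in(\igusa\cap P_i)\setminus\bigcup_{j\neq i}P_j$. The conditions $h\in P_i$ and $h\notin P_j$ force $H=h^\perp$ to contain the node $p_i$ and no other node. By the arguments recalled in the proof of \cref{lemma:one-nodal-hyperplane-segre}, any tangent hyperplane at a smooth point of $\segre$ off the Segre planes contains no node, while a tangent hyperplane at a point of a Segre plane $P_\sigma$ contains the entire plane $P_\sigma$ and hence four nodes. Since $H$ contains exactly one node, the condition $h\in\igusa$ can only be realized as a limit in which the tangency point approaches $p_i$; equivalently, writing $\igusa\cap P_i=2Q_i$ with $Q_i$ the dual quadric to the tangent cone at $p_i$, the hyperplane $H$ is tangent to this tangent cone at $p_i$. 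In analytic coordinates centred at $p_i$ the Segre cubic is cut out by $q+c$ with $q$ nondegenerate quadratic in four variables and $c$ cubic, and the condition $h\in Q_i$ forces $q|_H$ to have rank $2$. A linear change of coordinates brings the germ of $H\cap\segre$ at $p_i$ into the form $x^2+y^2+c(x,y,z)=0$, which is an isolated $\mathrm{A}_2$-singularity provided the coefficient of $z^3$ in $c$ is nonzero.

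For ``$\Rightarrow$'', assume $H\cap\segre$ has an isolated $\mathrm{A}_2$-singularity. The section is singular and not $1$-nodal, so \cref{lemma:smooth-segre-hyperplane-criterion,lemma:one-nodal-hyperplane-segre} leave only the cases in which $h$ lies in $P_i\cap P_j$ for some $i\neq j$ or in $(\igusa\cap P_i)\setminus\bigcup_{j\neq i}P_j$. In the first case both nodes $p_i$ and $p_j$ lie on $H$, giving two distinct singular points and contradicting the isolatedness assumption. Hence only the second case survives.

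The delicate part is the local computation concluding the forward direction: one must check that the germ is of type exactly $\mathrm{A}_2$, and not $\mathrm{A}_k$ for some $k\geq 3$, throughout the open set $(\igusa\cap P_i)\setminus\bigcup_{j\neq i}P_j$. This amounts to verifying that the $z^3$-coefficient of $c$ is nonzero on this locus, which I expect can be settled by a direct calculation from~\eqref{equation:segre-cubic-P5} after using the $\sym_6$-symmetry to reduce to a single node; the intuition is that the vanishing of this coefficient would signal $H$ meeting $\segre$ in an additional node, i.e.\ $h\in P_j$ for some $j\neq i$.
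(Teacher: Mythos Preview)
Your approach matches the paper's: both directions are argued exactly as you outline, with the converse reduced via \cref{lemma:smooth-segre-hyperplane-criterion,lemma:one-nodal-hyperplane-segre} to the dichotomy $h\in P_i\cap P_j$ versus $h\in(\igusa\cap P_i)\setminus\bigcup_{j\neq i}P_j$, and the forward direction handled by writing the section near $p_i$ in the normal form $x_0(x_1^2+x_2^2)+f_3(x_1,x_2,x_3)$.

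The one substantive difference is in the step you flag as delicate. The paper does not leave it to a direct $\sym_6$-equivariant calculation; instead it first uses the inclusion $\igusa[\sing]\subset\bigcup_{j\neq i}P_j$ (so $h\notin\igusa[\sing]$) to conclude that $H\cap\segre$ is irreducible with $p_i$ its \emph{unique} singular point, and then argues that this forces $f_3$ to cut out a smooth plane cubic and the completed local ring to be $\mathbb{C}[\![x_1,x_2,x_3]\!]/(x_1^2+x_2^2+x_3^3)$. Your stated intuition---that vanishing of the $z^3$-coefficient would signal a second node of $\segre$ lying on $H$, i.e.\ $h\in P_j$---is not the right mechanism: an $\mathrm{A}_{\geq 3}$ degeneration at $p_i$ is a local condition and does not by itself place another $p_j$ on $H$. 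So you should replace that heuristic with the paper's route (rule out all other singularities of $H\cap\segre$ first, then read off the normal form), rather than hoping the nonvanishing reduces to the exclusion $h\notin P_j$.
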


\begin{proof}
  A point $h\in \igusa \cap P_i$ induces a hyperplane section $H\cap \segre$
  such that $p_i\in H\cap\segre$ is a singularity worse than an $\mathrm{A}_1$-singularity.

  More precisely,
  the hyperplane $H$ will be at the node $p_i$ tangent to the smooth tangent cone of the node.
  That means that the tangent cone of the point $p_i$ considered in the hyperplane section $H\cap\segre$ is the cone over a smooth conic.
  We choose coordinates so that the node~$p_i$
  is $[1:0:0:0] \in P_i\cong \mathbb{P}_{x_0:x_1:x_2:x_3}^3$.
  Writing
  \begin{equation}
    f = \sum_{i=0}^3 x_0^i f_i(x_1,x_2,x_3)
  \end{equation}
  with $f_i$ homogenous of degree $i$,
  we have that $f_0=0$ as $p_i\in\segre$
  and the point $p_i$ being singular translates into $f_1=0$.
  The above discussion shows that $f_2$ is a singular conic
  and we can rewrite $f$ after projective transformation to have the form
  \begin{equation}
    f = x_0(x_1^2+x_2^2) + f_3(x_1, x_2,x_3)
  \end{equation}
  for $f_3$ a homogenous polynomial of degree $3$.

  The cubic surface $H\cap\segre$ will only be singular at the point $p_i$.
  Indeed, no tangent hyperplane at a point outside the Segre planes contains a node
  and tangent hyperplanes at smooth points on Segre planes lie on the singular locus of the Castelnuovo--Richmond quartic.
  Since $\igusa[\sing]\subset \bigcup_{j\neq i} P_j$
  we know that $H\cap\segre$ will neither be reducible nor contain any other node.
  This genericity implies that $f_3(x_1,x_2,x_3)$ defines a smooth elliptic curve inside the projective plane $x_0=0$
  and that the completion of the local ring of the surface $H\cap\segre$ at $p_i$
  is isomorphic to $\mathbb{C}[\![x_1,x_2,x_3]\!]/(x_1^2+x_2^2+x_3^3)$
  showing that the isolated singularity is an $\mathrm{A}_2$-singularity.

  For the converse, by \cref{lemma:one-nodal-hyperplane-segre}
  no tangent hyperplane at a point not lying on a Segre plane acquires a worse singularity than a node.
  Moreover, tangent hyperplanes at smooth points lying on a Segre plane produce reducible hyperplane sections.

  Thus, a hyperplane $H$ such that $H\cap\segre$ has an $\mathrm{A}_2$-singularity
  must contain a node $p_i$ and therefore $h$ must lie on some $P_i$.
  Moreover, since the singularity is worse than nodal,
  $h$ is contained in the intersection $P_i\cap \igusa$.

  To conclude, we need to show that $h\not \in P_j$ for $j\neq i$.
  This follows as the cubic surface is assumed to have a unique isolated singularity.
\end{proof}

\begin{remark}
  \label{remark:tangent-nodes-unique}
  In fact any hyperplane $H$ which is tangent at a node $p_i$
  is either nowhere else tangent and contains no other node
  or must contain a Segre plane.
  This follows from the above proof
  together with the fact that $\igusa\cap P_i\cap P_j$ is the union of two non-reduced lines,
  and therefore is contained in the singular locus of the Castelnuovo--Richmond quartic.
  For the latter claim, see \cite[Section 3.3.1]{MR1438547}.
\end{remark}
The small resolution of the Segre cubic produces a \emph{partial} resolution for the $\mathrm{A}_2$ singularity,
and we end up with a nodal singularity.

Since we are again considering a point $h$ in the smooth locus of the Castelnuovo--Richmond quartic, we have
\begin{equation}
  Y_h = \coble \times_{\mathbb{P}(W)} h = \Spec k[\epsilon]/(\epsilon^2).
\end{equation}

Next, we treat the case of several isolated singularities.

\begin{lemma}
  Let $r\in \{2,3,4\}$.
  A hyperplane section $H\cap \segre\subset\mathbb{P}(W^\vee)$ is an $r$-nodal cubic surface
  if and only if
  $h\in \mathbb{P}(W)$ lies on exactly $r$ different hyperplanes $P_i$'s,
  but not on the Castelnuovo--Richmond quartic.
  Such points $h\in \mathbb{P}(W)$ exist for all $r\in \{2,3,4\}$.
\end{lemma}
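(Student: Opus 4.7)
The plan is to follow the same dictionary as in the preceding lemmas: by construction $P_i \subset \mathbb{P}(W)$ is the hyperplane of hyperplanes through the node $p_i$, so $h \in P_i$ if and only if $p_i \in H$, and by classical projective duality $h \in \igusa$ if and only if $H$ is tangent to $\segre$ at some smooth point. With this dictionary the whole statement becomes a bookkeeping of which singularities $H \cap \segre$ can acquire.

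For the ``if'' direction, fix $h$ lying on exactly $P_{i_1}, \ldots, P_{i_r}$ and off $\igusa$. Each $p_{i_j}$ gives a node of $H \cap \segre$ unless $H$ is tangent to the tangent cone of $\segre$ at $p_{i_j}$; by the preceding $\mathrm{A}_2$-lemma this tangency is equivalent to $h \in P_{i_j} \cap \igusa$, which is excluded. Away from the $p_{i_j}$ no further singularity can appear: any other singular point would either be another node of $\segre$ on $H$ (ruled out by our count) or a smooth point at which $H$ is tangent to $\segre$ (ruled out by $h \notin \igusa$). The residual concern is that $H$ might contain a Segre plane $P_\sigma$, making $H \cap \segre$ reducible rather than a cubic surface with isolated nodes; but by the duality recalled in \cref{subsection:projective}, $H \supset P_\sigma$ is equivalent to $h$ lying on the singular line $\ell_\sigma$ of $\igusa$, contradicting the hypothesis $h \notin \igusa$.

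The converse is immediate: for $r \geq 2$, \cref{lemma:one-nodal-hyperplane-segre} together with \cref{remark:tangent-nodes-unique} imply that no node of an $r$-nodal cubic surface $H \cap \segre$ can arise from a tangency, so every node must be inherited from a node of $\segre$ lying on $H$, giving exactly $r$ indices with $h \in P_i$, and the absence of tangency-type singularities also forces $h \notin \igusa$. For existence, $\bigcap_{j=1}^{r} P_{i_j}$ is a linear subspace of $\mathbb{P}(W)$ of codimension at most $r$: for $r = 2, 3$ this has positive dimension and a generic point in it lies off $\igusa$ and off the remaining $P_k$, while for $r = 4$ the cleanest witness is furnished by any Cayley cubic surface $T_{i,j} \cap \segre$, which is classically the unique $4$-nodal cubic surface. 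The main obstacle is the elimination of Segre-plane containments in the ``if'' direction, and it is dispatched cleanly only by invoking the duality between Segre planes and singular lines of the Castelnuovo--Richmond quartic, rather than through a naive dimension count.
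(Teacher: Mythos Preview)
Your argument follows essentially the same route as the paper: the dictionary $h\in P_i \Leftrightarrow p_i\in H$ together with the earlier one-nodal and $\mathrm{A}_2$ lemmas handles both implications, and the Cayley hyperplanes $T_{i,j}$ supply the $r=4$ case.

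There is, however, a gap in your existence argument for $r=2,3$. You assert that a generic point of $\bigcap_{j=1}^{r} P_{i_j}$ lies off $\igusa$, but this presupposes that this linear subspace is not contained in $\igusa$, which you do not justify; a quartic threefold can certainly contain lines and planes, so this is not automatic. The paper closes this gap by reversing your order of cases: it first exhibits the Cayley point $t_{i,j}=P_a\cap P_b\cap P_c\cap P_d\in\mathbb{P}(W)\setminus\igusa$ (your $r=4$ witness), and then observes that this same point lies in $P_a\cap P_b$ and in $P_a\cap P_b\cap P_c$, certifying that neither is contained in $\igusa$. Once you feed your own $r=4$ argument back into the $r=2,3$ cases in this way, the proof is complete and matches the paper's.
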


\begin{proof}
  Let us first prove the if direction.
  For a point $h \in \mathbb{P}(W)$ which is contained in exactly $r$ of the hyperplanes $P_i$
  the intersection $H\cap \segre$ will contain exactly $r$~nodes of the Segre cubic.
  Moreover, the intersection is transversal at all other points of the Segre cubic,
  since we assume that $h$ does not lie on the singular locus of the Castelnuovo--Richmond quartic
  and therefore does not contain a Segre plane
  (transversality outside the Segre planes again follows from \eqref{equation:projective-duality-isomorphism}).

  To prove the converse
  note that for a tangent hyperplane at a point of the Segre cubic
  we have two possibilities.
  Either the point lies on a Segre plane
  and the hyperplane section will therefore contain some $P_\sigma$.
  The second option is that the point does not lie on any of the Segre planes
  which by \cref{lemma:one-nodal-hyperplane-segre} is always one-nodal.
  \Cref{remark:tangent-nodes-unique} shows that such a hyperplane $h$ cannot lie on $\igusa$,
  since otherwise the hyperplane section would be a cone over an elliptic curve.

  It remains to show that such hyperplanes exist.
  For this we can consider the hyperplanes $T_{i,j}$
  which contain exactly four of the nodes
  and the intersection $T_{i,j} \cap \segre$
  is isomorphic to the Cayley cubic surface,
  the (unique up to isomorphism) cubic surface with four nodes.
  By what we have already proven,
  we know that for each hyperplane $T_{i,j}$
  there are exactly four distinct integers $a,b,c,d\in \{1,\dots ,10\}$ such that
  \begin{equation}
    t_{i,j} = P_a \cap P_b \cap P_c \cap P_d \in \mathbb{P}(W)\setminus \igusa.
  \end{equation}
  Thus a general point of the plane $P_a\cap P_b$
  corresponds to a hyperplane containing exactly two nodes
  and a general point of $P_a \cap P_b \cap P_c$
  yields a hyperplane containing exactly three of the nodes
  such that the hyperplanes do not contain any Segre plane.
\end{proof}

Since all nodes of the hyperplane section were already nodes on the Segre cubic,
they are resolved by the restriction of~$\rho$
and we again obtain a smooth weak del Pezzo surface.

On the dual side we have that~$h\notin\igusa$,
so that we always obtain that~$Y_h$ is 2~points.
Hence either by Orlov's blowup formula for~$X_H$ as an (iterated) blowup
or homological projective duality
we obtain~9 exceptional objects in the hyperplane section.

\begin{remark}
  The above cases of smooth, nodal and $\mathrm{A}_2$ singular cubic surfaces correspond exactly to hyperplanes $h\in \mathbb{P}(W)$ such that $h$ does not lie on the singular locus of the Castelnuovo--Richmond quartic.
\end{remark}

\begin{lemma}
  All other hyperplane sections~$H\cap\segre$ yield reducible cubic surfaces
  which are
  either the union of a plane and a quadric
  or the union of three Segre planes.
\end{lemma}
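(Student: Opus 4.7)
The plan is to combine the classification of the previous lemmas with projective duality between~$\segre$ and~$\igusa$.

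First I would identify exactly which hyperplanes are not yet accounted for. By the preceding lemmas and the remark just above, a hyperplane $h\in\mathbb{P}(W)$ that does not lie on the singular locus of $\igusa$ produces a hyperplane section of $\segre$ whose singularities are all isolated (nodal or at worst $\mathrm{A}_2$), so such a section is irreducible. Hence the ``other'' hyperplanes are precisely those with $h\in\igusa[\sing]$, i.e.~$h$ lies on one of the~15 singular lines $\ell_1,\ldots,\ell_{15}$ of the Cremona--Richmond configuration.

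Next I would invoke projective duality from \cref{subsection:projective}: the~15 singular lines of $\igusa$ are exactly the projective duals of the~15 Segre planes $P_\sigma\subset\segre$. So if $h\in\ell_\sigma$ then the hyperplane $H=\mathbb{P}(L)\subset\mathbb{P}(W^\vee)$ contains the Segre plane $P_\sigma$. Since $P_\sigma\subset\segre\cap H$ and $\segre\cap H$ is a cubic surface in $H\cong\mathbb{P}^3$ containing the plane $P_\sigma$, we can factor out the linear equation of $P_\sigma$ to get the residual decomposition
\begin{equation}
  H\cap\segre=P_\sigma\cup Q,
\end{equation}
where $Q\subset H$ is a (possibly degenerate or reducible) quadric surface. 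This already delivers the ``plane plus quadric'' form in general.

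It then remains to analyse when $Q$ itself decomposes as a union of planes. Any plane contained in $\segre$ must be one of the~15 Segre planes, so if $Q$ is reducible into two planes, these are two further Segre planes $P_{\sigma'},P_{\sigma''}$, both contained in $H$. Equivalently, $h$ lies on the three singular lines $\ell_\sigma,\ell_{\sigma'},\ell_{\sigma''}$ dual to $P_\sigma,P_{\sigma'},P_{\sigma''}$. By the combinatorics of the Cremona--Richmond configuration ($(15_3,15_3)$, three lines through each point), the only points lying on more than one singular line are the~15 points $q_j$ (each of which lies on exactly three singular lines), and these correspond exactly to the hyperplanes $H_{i,j}=\{x_i+x_j=0\}$ recalled in \cref{subsection:projective}, which each contain exactly three Segre planes. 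In that case a degree comparison forces $H\cap\segre$ to be the union of these three Segre planes.

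The main (minor) obstacle is ruling out reducible residual quadrics outside the~15 special points $q_j$; this is handled cleanly by the observation above that any plane inside $\segre$ is a Segre plane combined with the Cremona--Richmond incidence combinatorics, so no separate geometric argument is needed.
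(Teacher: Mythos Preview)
Your proof is correct and takes essentially the same approach as the paper: identify the remaining hyperplanes as those with $h\in\igusa[\sing]$, use that the singular lines of $\igusa$ are projectively dual to the Segre planes to produce the plane-plus-quadric splitting, and invoke the Cremona--Richmond combinatorics together with the description of the $H_{i,j}$ for the three-plane case. The paper's own proof is much terser---it simply observes that the preceding lemmas exhaust all $h\notin\igusa[\sing]$ and declares the rest ``immediate''---so you have in effect written out the details the paper leaves to the reader.
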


\begin{proof}
  It is immediate from the above discussion that we have already exhausted all possibilities for hyperplanes $H\subset \mathbb{P}(W^\vee)$
  such that the corresponding point $h\in \mathbb{P}(W)$
  does not lie on the singular locus of the Castelnuovo--Richmond quartic.
  This yields the assertion.
\end{proof}
We just remark that in the first case the residual quadric is the hyperplane section of the tangent cone of one of the nodes.

We will not provide an explicit description of $X_H$,
as this is a non-normal surface.
On the dual side something interesting happens with $Y_h$:
because~$h$ lies in the singular locus of~$\igusa$
we have that~$h\cap\coble$ is a double point,
\emph{but} $Y_h$ is not of the expected dimension.
By \cite[Lemma~2.35]{MR4076812} the fiber of~$Y\to\mathbb{P}(W)$ over~$h$
is either a line or a conic.
This means we have to compute~$Y_h$ as a derived fiber product.
We will illustrate such a computation for a hyperplane section of~$Y$ instead,
see \cref{lemma:tangent-hyperplane-igusa} and the ensuing discussion.

\begin{sidewaystable}
  \centering
  \begin{adjustbox}{center}
    \begin{tabular}{cc|c|cc}
      \toprule
      $H\cap\segre$                                 & $X_H$                        & $\mathcal{C}_H$                               & $Y_h$        & $h\cap\coble$ \\
      \midrule
      smooth cubic surface                          & smooth cubic surface         & 2 orthogonal objects                          & 2 points     & 2 points \\
      \addlinespace
      1-nodal cubic surface                         & singular cubic surface       & $\derived^\bounded(k[\epsilon]/(\epsilon^2))$ & double point & double point \\
                                                    & weak del Pezzo surface       & 2 orthogonal objects                          & 2 points     & 2 points \\
      \addlinespace
      2-nodal cubic surface                         & weak del Pezzo surface       & 2 orthogonal objects                          & 2 points     & 2 points \\
      \addlinespace
      3-nodal cubic surface                         & weak del Pezzo surface       & 2 orthogonal objects                          & 2 points     & 2 points \\
      \addlinespace
      4-nodal Cayley cubic surface                  & weak del Pezzo surface       & 2 orthogonal objects                          & 2 points     & 2 points \\
      \addlinespace
      cubic surface with $\mathrm{A}_2$ singularity & nodal weak del Pezzo surface & $\derived^\bounded(k[\epsilon]/(\epsilon^2))$ & double point & double point \\
      \addlinespace
      union of Segre plane and quadric              &                              &                                               &              & double point \\
      \addlinespace
      union of 3 Segre planes                       &                              &                                               &              & double point \\
      \bottomrule
    \end{tabular}
  \end{adjustbox}
  \caption{Description of all possible hyperplane sections of $X\to\mathbb{P}(W^\vee)$ and the homological projective dual sections}
  \label{table:segre-hyperplane}

  \bigskip
  \bigskip
  \bigskip

  \centering
  \begin{adjustbox}{center}
    \begin{tabular}{cc|c|cc}
      \toprule
      $H\cap\coble$                 & $Y_H$                                        & $\mathcal{C}_H$                               & $X_h$                                       & $h\cap\segre$ \\
      \midrule
      15-nodal quartic double solid & $\Bl_4\mathbb{P}(\mathrm{T}_{\mathbb{P}^2})$ & zero                                          & $\emptyset$                                 & $\emptyset$ \\
      \addlinespace
      non-reduced quadric           &                                              &                                               &                                             & point \\
      \addlinespace
      16-nodal quartic double solid & nodal weak Fano threefold                    & $\derived^\bounded(k[\epsilon]/(\epsilon^2))$ & $k[\epsilon]/(\epsilon^2)$, $|\epsilon|=-1$ & point \\
      \bottomrule
    \end{tabular}
  \end{adjustbox}
  \caption{Description of some interesting possible hyperplane sections of $Y\to\mathbb{P}(W)$ and the homological projective dual sections}
  \label{table:coble-hyperplane}
\end{sidewaystable}

\paragraph{Top-down approach}
It is also possible to directly consider hyperplane sections of~$X$
and fibers of~$\varpi\colon Y\to\mathbb{P}(W)$,
using the description as projective bundles.
Only \emph{a posteriori} do we make the link
to the more classical picture of the singular varieties~$\segre$ and~$\coble$.
We will only briefly explain this method,
to avoid too much redundancy with the earlier discussion.

The hyperplane section of~$\mathbb{P}(W^\vee)$
corresponding to~$L\subset W^\vee$ gives a surjective morphism~$W^\vee\to k$,
so that the fiber product~$X_L$ corresponding to the hyperplane section
\begin{equation}
  \begin{tikzcd}
    X_L \arrow[r, hook] \arrow[d] & X \arrow[d, "f"] \\
    \mathbb{P}(L) \arrow[r, hook] & \mathbb{P}(W^\vee)
  \end{tikzcd}
\end{equation}
can be written as~$\mathbb{P}_S(\mathcal{R})$,
where
\begin{equation}
  \mathcal{R}\colonequals\operatorname{im}(\mathcal{U}_2\to\mathcal{O}_S)
\end{equation}
using the composition
\begin{equation}
  \mathcal{U}_2\hookrightarrow W^\vee\otimes_k\mathcal{O}_S\twoheadrightarrow\mathcal{O}_S.
\end{equation}
We need to understand what the degeneracy locus of the morphism~$\mathcal{U}_2\to\mathcal{O}_S$ is.
The analysis is similar to that of \cite[Lemma~2.35]{MR4076812}:
the zero locus of a non-zero section~$\sigma\in\HH^0(\Gr(2,W^\vee),\mathcal{U}_2^\vee)$
is~$\Gr(2,4)$,
and from the description~$S=\Gr(2,W^\vee)\cap\mathbb{P}^5$
we see that there are 3 cases:
\begin{enumerate}
  \item a 0-dimensional scheme~$Z$ of length 2;
  \item a line~$L\subset S$;
  \item a conic~$C\subset S$.
\end{enumerate}
In the first case we get that~$\mathcal{R}\cong\mathcal{I}_Z$
and thus the composition
\begin{equation}
  \label{equation:sasha-composition}
  X_H\cong\mathbb{P}_S(\mathcal{I}_Z)\hookrightarrow X=\mathbb{P}_S(\mathcal{U}_2)\twoheadrightarrow S
\end{equation}
is equal to~$\operatorname{Bl}_ZS\to S$.
This can now be compared to the results in \cref{table:segre-hyperplane}:
when~$Z$ is reduced the position of the two points with respect to the~10~lines on~$S$ determines which case we are in,
when~$Z$ is non-reduced we are in the case that~$X_H$ is a nodal weak del Pezzo surface.

In the second case we have that~$\mathcal{R}\cong\mathcal{I}_L$,
and the restriction of~$\mathcal{U}_2$ to~$L$ is~$\mathcal{O}_L\oplus\mathcal{O}_L(-1)$,
as in the proof of \cite[Lemma~2.32]{MR4076812}.
We get that
\begin{equation}
  X_H\cong\mathbb{P}_L(\mathcal{U}_2|_L)\cup_L S\hookrightarrow X.
\end{equation}
The composition~$\mathbb{P}_L(\mathcal{U}_2|_L)\cong\mathbb{F}_1\hookrightarrow X\to\segre$ has as image a Segre plane
(with the restriction to this~$\mathbb{P}^2$ being the blowup),
and~$S$ gets blown down to the residual quadric of the hyperplane section through the Segre plane.

In the third case
the restriction of~$\mathcal{U}_2$ to~$C$ is now~$\mathcal{O}_C(-1)\oplus\mathcal{O}_C(-1)$,
and we get that
\begin{equation}
  X_H\cong\mathbb{P}_C(\mathcal{U}_2|_C)\cup_L S.
\end{equation}
If~$C$ is a smooth conic
we get that~$\mathbb{P}^1\times\mathbb{P}^1$ is mapped isomorphically onto a quadric in~$\segre$,
and~$S$ gets blown down to a Segre plane.

\subsection{Other linear sections}
\label{subsection:other-linear-sections}
We will now discuss hyperplane sections of the Coble fourfold,
and codimension~2~linear sections of both the Segre cubic and the Coble fourfold.
Doing so we can describe the derived categories of the linear sections of the resolutions.

\paragraph{Hyperplane sections of the Coble fourfold}
Dual to the discussion in \cref{subsection:hyperplane-segre}
we are considering hyperplane sections of the resolution~$Y$ of~$\coble\subset\mathbb{P}(W)$.
We will now write~$H\subset\mathbb{P}(W)$ for a hyperplane
corresponding to~$L\subset W$ of codimension~1,
and~$h\in\mathbb{P}(W^\vee)$ for the dual point~$\mathbb{P}(L^\perp)$.
Likewise~$X_h$ and~$Y_H$ denote the (derived) fiber products.
Observe that we are now applying \cref{section:hpd} to~$Y$
and consider~$X$ as its homological projective dual.

Because the singular locus of the Castelnuovo--Richmond quartic consists of~15~lines,
every hyperplane section~$H\subset\mathbb{P}(W)$ will necessarily intersect the singular locus.
An overview of the cases we will discuss
is given in \cref{table:coble-hyperplane}.

The generic case is when this happens in exactly~15~nodes,
so that we obtain a singular quartic surface with~15~nodes.
On the Segre side this corresponds to the hyperplane $h\in \mathbb{P}(W^\vee)$
not lying on the Segre cubic and not being contained in one of the hyperplanes $H_{i,j}$
the hyperplanes dual to the singular locus of the singular locus of the Castelnuovo--Richmond quartic.

The corresponding hyperplane section of~$\coble$ is a quartic double solid with~15~nodes,
branched along the singular quartic surface.
The following result shows how the Castelnuovo--Richmond quartic and Coble fourfold
are universal for such varieties \cite[Theorem~1 and~Proposition 2]{MR3954304},
\begin{proposition}[Avilov]
  \label{proposition:avilov}
  Let~$T$ be a quartic surface, singular in precisely~15~nodes.
  Then~$T$ is a hyperplane section of the Castelnuovo--Richmond quartic.
  Let~$Z$ be a quartic double solid, singular in precisely~15~nodes.
  Then~$Z$ is a hyperplane section of the Coble fourfold.
\end{proposition}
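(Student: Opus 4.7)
The plan is to prove the two statements in sequence, reducing the threefold case to the surface case via the double cover construction.

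For the first statement, let $T\subset\mathbb{P}^3$ be a quartic surface with exactly 15 nodes. Since the singular locus of $\igusa$ consists of 15 lines forming a Cremona--Richmond $(15_3,15_3)$-configuration (see \cref{subsection:projective}), a general hyperplane $H\subset\mathbb{P}(W)$ meets each of these lines transversely in one point, cutting out on $H\cong\mathbb{P}^3$ a configuration of 15 points in Cremona--Richmond position; these are the nodes of $H\cap\igusa$. My approach would be to show that the 15 nodes of an abstract $T$ are \emph{forced} to form such a configuration, after which a dimension count identifies $T$ with a hyperplane section of $\igusa$. Concretely, each node imposes 4 linear conditions on the 35-dimensional space of quartic forms on $\mathbb{P}^3$, so 15 nodes in general position would impose 60 conditions, making existence impossible; the geometric content is to show that the nodes of a 15-nodal quartic lie in very special position, and that this special position coincides with a hyperplane slice of the 15 singular lines of $\igusa$.

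An alternative, more streamlined route proceeds via the associated K3 surface. The minimal resolution $\widetilde{T}\to T$ is a K3 surface carrying 15 pairwise disjoint $(-2)$-curves $E_1,\ldots,E_{15}$; together with the pullback of the hyperplane class these generate a rank-16 sublattice of $\mathrm{Pic}(\widetilde{T})$. The same rank-16 lattice appears as a primitive sublattice of $\mathrm{Pic}$ of a resolution of a generic hyperplane section of $\igusa$. A Torelli-type argument for K3 surfaces then furnishes an isomorphism $\widetilde{T}\cong\widetilde{H\cap\igusa}$ for some $H\subset\mathbb{P}(W)$, and one checks that this isomorphism descends to an isomorphism of the polarised pairs, identifying $T$ with $H\cap\igusa$ as quartic surfaces in $\mathbb{P}^3$.

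For the second statement, let $Z$ be a quartic double solid with exactly 15 nodes, so that $Z\to\mathbb{P}^3$ is a double cover ramified along some quartic surface $T$. Since the singularities of $Z$ are in bijection with the singularities of its branch locus (the cover is smooth over smooth points of $T$ and has a node over each node of $T$), the surface $T$ has exactly 15 nodes. By the first statement $T=H\cap\igusa$ for some hyperplane $H\subset\mathbb{P}(W)$, and the double cover of $H\cong\mathbb{P}^3$ branched along $T$ coincides with the restriction $\pi^{-1}(H)\subset\coble$ of the defining double cover $\pi\colon\coble\to\mathbb{P}(W)$, exhibiting $Z$ as a hyperplane section of $\coble$. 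The main obstacle is the first statement: establishing the rigidity of the 15-node configuration. The Torelli-based approach avoids the combinatorial enumeration of possible configurations at the cost of requiring a careful lattice-theoretic identification of $\mathrm{Pic}(\widetilde{T})$, while the direct approach requires a thorough analysis of which sets of 15 points in $\mathbb{P}^3$ can support a quartic surface with nodes precisely at those points.
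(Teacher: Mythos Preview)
The paper does not prove this proposition at all: it is quoted as a result of Avilov, with a citation to \cite[Theorem~1 and Proposition~2]{MR3954304}, and no argument is given. So there is no proof in the paper to compare your proposal against.

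That said, a few comments on your sketch. Your reduction of the second statement to the first is correct and is indeed the standard route: a double cover of $\mathbb{P}^3$ branched along a quartic is determined by its branch locus (because $\mathcal{O}_{\mathbb{P}^3}(2)$ is the unique square root of $\mathcal{O}_{\mathbb{P}^3}(4)$), and its singular locus is exactly that of the branch divisor; so once $T\cong H\cap\igusa$, the double solid is automatically $\pi^{-1}(H)\subset\coble$.

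For the first statement, however, both approaches you outline are plans rather than proofs. The ``direct'' approach is essentially a restatement of the goal: you have not explained \emph{how} to force the 15 nodes into Cremona--Richmond position, and this is exactly the nontrivial content. The Torelli approach is more promising, and the dimension count does match (rank~16 in $\mathrm{Pic}$ gives a 4-dimensional family of lattice-polarised K3s, matching the 4-dimensional family of hyperplanes in $\mathbb{P}(W)$), but to make it rigorous you would still need to (i) identify the specific isometry class of the lattice spanned by $H,E_1,\ldots,E_{15}$ and check it is independent of $T$, (ii) show this sublattice is primitive, and (iii) prove that the hyperplane-section family actually surjects onto the corresponding lattice-polarised moduli space. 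None of these steps is automatic, and you correctly flag this as the main obstacle. Avilov's own argument, as far as one can infer from the paper's later use of \cite[Proposition~1]{MR3954304}, proceeds via the explicit birational geometry of the threefold (exhibiting a small resolution as $\Bl_4\mathbb{P}(\mathrm{T}_{\mathbb{P}^2})$) rather than through K3 periods.
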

Incorporating the resolution~$\varpi$ into the picture,
the results of Avilov imply the following.
\begin{lemma}
  \label{lemma:quartic-double-solid}
  Let~$H\subseteq\mathbb{P}(W)$ be a hyperplane as in \cref{proposition:avilov}.
  Then~$Y_H$ is a smooth projective weak Fano threefold,
  obtained as a small resolution of a quartic double solid singular in~15~nodes.
  Its derived category has semiorthogonal decompositions
  \begin{equation}
    \begin{aligned}
      \derived^\bounded(Y_H)
      &=\langle\derived^\bounded(S),\derived^\bounded(S)\rangle \\
      &=\langle\derived^\bounded(\mathbb{P}(\mathrm{T}_{\mathbb{P}^2})),E_{1,1},E_{1,2},E_{2,1},E_{2,2},E_{3,1},E_{3,2},E_{4,1},E_{4,2}\rangle
    \end{aligned}
  \end{equation}
  where the~$E_{i,j}$ are exceptional objects.
\end{lemma}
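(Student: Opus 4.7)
The strategy is to combine the HPD linear-section formula \eqref{equation:linear-sections-sod} (applied via \cref{theorem:main-theorem-precise}) with a direct geometric identification of~$Y_H$ as a blowup. A hyperplane $H \subset \mathbb{P}(W)$ corresponds to a one-dimensional subspace $L \subset W^\vee$ with dual point $h = \mathbb{P}(L) \in \mathbb{P}(W^\vee)$. By \cref{corollary:classical-projective-dual} together with the analysis in \cref{subsection:hyperplane-segre} (compare the rows of \cref{table:segre-hyperplane} where $h \cap \segre = \emptyset$), the hypothesis that $H \cap \coble$ is 15-nodal forces $h \notin \segre$, so that $X_L = \emptyset$. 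For smoothness of~$Y_H$ and the small-resolution claim, I would invoke \cref{proposition:resolution}: since $\rho\colon Y \to \coble$ is a small resolution whose exceptional locus lies entirely over $\coble_{\sing}$, and since the 15 nodes of $H \cap \coble$ all lie in this singular locus (by \cref{proposition:avilov} and the description in \cref{subsection:projective}), the restriction $Y_H \to H \cap \coble$ inherits the small-resolution structure, hence $Y_H$ is smooth. Weak Fano follows by adjunction, $-K_{Y_H} = g^*\mathcal{O}_{\mathbb{P}(W)}(2)|_{Y_H}$, which is the pullback of an ample class under $g|_{Y_H}$.

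With $L$ admissible and $X_L = \emptyset$, the first decomposition is immediate from \eqref{equation:linear-sections-sod}: for $\dim_k L = 1$ and $n = 3$ the formula reads
\begin{equation}
  \derived^\bounded(Y_H) = \langle \mathcal{B}_2(-2)|_{Y_H},\, \mathcal{B}_1(-1)|_{Y_H},\, \mathcal{C}_L \rangle,
\end{equation}
and the vanishing $\mathcal{C}_L = \derived^\bounded(X_L) = 0$, together with the rectangular structure $\mathcal{B}_0 = \mathcal{B}_1 = \mathcal{B}_2 = q^*\derived^\bounded(S)$ from \cref{theorem:main-theorem-precise}, collapses the right-hand side to $\langle \derived^\bounded(S),\,\derived^\bounded(S) \rangle$.

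The second decomposition rests on the geometric identification $Y_H \cong \Bl_4 \mathbb{P}(\mathrm{T}_{\mathbb{P}^2})$ indicated in the caption of \cref{table:coble-hyperplane}, and this is the main obstacle. I would establish it by unwinding the description $Y_H = \{\xi = 0\} \subset Y = \mathbb{P}_S(\mathcal{U}_3)$, where $\xi\colon \mathcal{U}_3 \to \mathcal{O}_S$ is the section determined by $h \in W^\vee$; computing the degeneracy curve $C_\xi \subset S$ on which $\xi$ vanishes fiberwise (the intersection of~$S$ with the Schubert cycle $\{\mathcal{U}_3 \subset \ker h\} \subset \Gr(3,W)$, analysed in the spirit of \cite[Lemma~2.35]{MR4076812}); and producing a second morphism $Y_H \to \mathbb{P}(\mathrm{T}_{\mathbb{P}^2})$ by swapping the roles of the two projections in the resulting incidence-type diagram, whose Stein factorisation contracts four disjoint loci to recover $Y_H$ as a blowup of $\mathbb{P}(\mathrm{T}_{\mathbb{P}^2})$. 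Once the isomorphism is in place, Orlov's blowup formula produces $4 \cdot 2 = 8$ exceptional objects $E_{i,j}$ on top of $\derived^\bounded(\mathbb{P}(\mathrm{T}_{\mathbb{P}^2}))$, and the total length $6 + 8 = 14 = 2 \cdot 7$ matches the first decomposition, providing a consistency check on the geometric identification.
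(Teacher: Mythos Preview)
Your argument for smoothness, weak Fano, and the first semiorthogonal decomposition is essentially the paper's: $h\notin\segre$ so $X_h=\emptyset$, hence $\mathcal{C}_L=0$ and \eqref{equation:linear-sections-sod} yields $\derived^\bounded(Y_H)=\langle\derived^\bounded(S),\derived^\bounded(S)\rangle$.

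Where you diverge is the second decomposition. The paper does not prove the identification $Y_H\cong\Bl_4\mathbb{P}(\mathrm{T}_{\mathbb{P}^2})$ by any degeneracy-locus or incidence analysis; it simply cites \cite[Proposition~1]{MR3954304}, where Avilov shows that a small resolution of a 15-nodal quartic double solid is such a blowup (equivalently, a hyperplane section of $\Bl_4(\mathbb{P}^2\times\mathbb{P}^2)$, the \emph{second} small resolution of the Coble fourfold from \cite[\S2.1]{MR4076812}). Orlov's blowup formula then gives the eight exceptional objects. Your proposed route---computing the Schubert-type degeneracy locus of $\xi\colon\mathcal{U}_3\to\mathcal{O}_S$ and then ``swapping projections'' to land on $\mathbb{P}(\mathrm{T}_{\mathbb{P}^2})$---is not obviously wrong, but as written it is only a gesture: you have not specified the incidence diagram, nor explained why the second projection should factor through $\mathbb{P}(\mathrm{T}_{\mathbb{P}^2})$, nor why its Stein factorisation contracts exactly four curves. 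Making this precise would amount to reproving Avilov's result. Just cite it.
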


\begin{proof}
  The hyperplane section~$Y_H$ is a smooth projective weak Fano threefold
  because it is a small resolution of a quartic double solid.
  The first semiorthogonal decomposition is induced by homological projective duality
  and consists of~$14=2\times 7$~exceptional objects,
  as~$X_h$ is empty in this case.
  The second semiorthogonal decomposition follows from an explicit description of the hyperplane section
  given in \cite[Proposition~1]{MR3954304},
  as the blowup of the Fano threefold~$\mathbb{P}(\mathrm{T}_{\mathbb{P}^2})$,
  isomorphic to a~$(1,1)$-section of~$\mathbb{P}^2\times\mathbb{P}^2$,
  in~4~points in general position,
  and applying Orlov's blowup formula.
  We again count~$14=6+4\times 2$ exceptional objects.
\end{proof}

\begin{remark}
  Observe that the description of~$Y_H$ in the proof of \cref{lemma:quartic-double-solid}
  parallels that of the \emph{second} small resolution of singularities of the Coble fourfold
  discussed in \cite[\S2.1]{MR4076812}.
  This small resolution is obtained as the blowup of~$\mathbb{P}^2\times\mathbb{P}^2$ in~4 points,
  and the small resolution in \cref{lemma:quartic-double-solid} is a hyperplane section of it.
\end{remark}

The next case we consider is that of a tangent hyperplane.

\begin{lemma}
  \label{lemma:tangent-hyperplane-igusa}
  For any point $p\in \igusa \cap P_i\setminus \igusa[\sing]$
  the tangent hyperplane is the non-reduced quadric $Q_i$.
  For any other point $p\in \igusa \setminus (\bigcup_i P_i)$
  the tangent hyperplane section $\tangent_p\igusa \cap \igusa$ is a singular Kummer quartic surface.
\end{lemma}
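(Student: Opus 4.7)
The plan is to treat the two assertions separately, using the classical projective duality between $\segre$ and $\igusa$ as the key dictionary.

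The first assertion is bookkeeping. By the projective geometry of $\igusa$ recalled above, $P_i \cap \igusa = 2 Q_i$ as subschemes, with $Q_i$ smooth. Consequently, at any smooth point $p \in \igusa$ lying on $Q_i$ the intersection multiplicity of $P_i$ with $\igusa$ at $p$ is at least $2$, which, since $\igusa$ is a hypersurface and $p$ is smooth, forces $\tangent_p \igusa = P_i$. Hence the tangent hyperplane section is $P_i \cap \igusa = 2 Q_i$.

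For the second assertion fix $p \in \igusa \setminus \bigcup_i P_i$ and set $H \colonequals \tangent_p \igusa$. Each of the $15$ singular lines $\ell_j$ of $\igusa$ is projectively dual to a Segre plane $P_{\sigma_j}$, and hence is contained in the four $P_i$ corresponding to the four nodes on $P_{\sigma_j}$; so the hypothesis forces $p \notin \igusa[\sing]$ and $H$ is well defined, while the dual point $h \in \mathbb{P}(W^\vee)$ of $H$ lies in $\segre \setminus \bigcup_\sigma P_\sigma$ by \eqref{equation:projective-duality-isomorphism}. The surface $H \cap \igusa \subset H \cong \mathbb{P}^3$ is a quartic, singular precisely on $\{p\} \cup (H \cap \igusa[\sing])$, and the strategy is to exhibit exactly $16$ nodes and invoke the classical characterization of Kummer quartic surfaces. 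At $p$ the Gauss map of $\igusa$ is an isomorphism near $p$ by \eqref{equation:projective-duality-isomorphism}, so the Hessian of a local equation for $\igusa$ at $p$ is nondegenerate on $\tangent_p\igusa$, producing an ordinary double point at $p$. Along each $\ell_j$ the hypersurface $\igusa$ has transverse $\mathrm{A}_1$-singularities, so transverse intersections of $H$ with $\ell_j$ contribute nodes of the section, and transversality is immediate because the hyperplanes containing $\ell_j$ form exactly the Segre plane $P_{\sigma_j}$, while $h \notin \bigcup_\sigma P_\sigma$.

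It remains to show that the $15$ points of $H \cap \igusa[\sing]$ are distinct: two of them coincide only when $H$ passes through one of the $15$ Cremona--Richmond points, dual to some $H_{i,j}$, and this is excluded because $H_{i,j} \cap \segre$ is a union of three Segre planes by the recalled projective geometry while $h \notin \bigcup_\sigma P_\sigma$. Combined with the node at $p$ (which lies outside $\igusa[\sing]$) this yields $16$ distinct nodes, so $H \cap \igusa$ is a Kummer quartic surface. The main obstacle is the combinatorial bookkeeping needed to propagate the hypothesis $p \notin \bigcup_i P_i$ through the duality into these general-position statements for $H$; the key input is the containment $H_{i,j} \cap \segre \subset \bigcup_\sigma P_\sigma$, without which one could only prove the statement for generic $p$ rather than for every $p$ off $\bigcup_i P_i$.
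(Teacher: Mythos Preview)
Your argument is essentially correct and takes a genuinely different route from the paper's. The paper's proof is two sentences: it uses the duality isomorphism \eqref{equation:projective-duality-isomorphism} to observe that $H=\tangent_p\igusa$ is tangent to $\igusa$ only at $p$, and then defers entirely to \cite[Theorem~3.3.8]{MR1438547} for the identification of the section with a Kummer surface. You instead carry out a direct node count --- one node at $p$ from the nondegenerate Hessian (Gauss map a local isomorphism), fifteen more from the transverse $\mathrm{A}_1$ structure along the $\ell_j$, distinctness via the combinatorics $H_{i,j}\cap\segre\subset\bigcup_\sigma P_\sigma$ --- and then invoke the classical characterisation of a $16$-nodal quartic as a Kummer surface. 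This buys self-containedness and makes the provenance of every node transparent; the paper's approach buys brevity at the cost of an external reference.

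There is one small gap worth closing. Your assertion that $H\cap\igusa$ is singular \emph{precisely} on $\{p\}\cup(H\cap\igusa[\sing])$ is what guarantees the surface has only isolated singularities (hence is irreducible), so that the $16$-nodal characterisation applies cleanly; but you only analyse the singularities you exhibit, not rule out others. The missing step is to exclude tangency of $H$ at a smooth point $q\in\igusa\cap P_i\setminus\igusa[\sing]$: by the first part of the lemma $\tangent_q\igusa=P_i$ there, so $H=P_i$ would force $h=p_i$, which is impossible since every node of $\segre$ lies on six Segre planes while $h\notin\bigcup_\sigma P_\sigma$. Together with the global injectivity of the Gauss map on $\igusa\setminus\bigcup_iP_i$ (not merely the local isomorphism you invoke for the Hessian), this yields the ``precisely''. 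You also use without comment that $\igusa$ has transverse $\mathrm{A}_1$ singularities along the open parts of the $\ell_j$; this is true but is not stated in the paper, so a word of justification would help.
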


\begin{proof}
  The first statement is clear.
  For the second,
  use that the inverse of the duality map restricted to the complement of the hyperplanes $P_i$ is an isomorphism.
  This shows that the hyperplane will only be tangent at $p\in \igusa$.
  For more details, see \cite[Theorem~3.3.8]{MR1438547}.
\end{proof}


%

There are~16~singularities on the Kummer quartic surface~$H\cap\igusa$,
and~$H\cap\coble$ is a double cover ramified in the Kummer quartic,
a~16-nodal double quartic solid.
The restriction of the resolution~$Y\to\coble$ resolves all nodes except the one coming from the tangency point,
so that~$Y_H$ is a singular weak Fano fold.

To compute~$X_h$
we need to take the derived fiber product,
as~$h\in\segre\setminus\segre[\sing]$
implies that the usual fiber product~$X\times_{\mathbb{P}(W^\vee)}h$
is a single point,
which is not of the expected dimension~$-1$.
To compute the derived fiber product~$X_h$
we can take a Koszul resolution for~$\segre\subset\mathbb{P}(W^\vee)$:
\begin{equation}
  \label{equation:koszul}
  0\to\mathcal{O}_{\mathbb{P}(W^\vee)}(-3)\to\mathcal{O}_{\mathbb{P}(W^\vee)}\to\mathcal{O}_{\segre}\to0
\end{equation}
and consider the restriction of
the sheaf of dg~algebras~$[\mathcal{O}_{\mathbb{P}(W^\vee)}(-3)\to\mathcal{O}_{\mathbb{P}(W^\vee)}]$
living in degrees~$-1$ and~$0$
to~$h$.
This gives a dg~algebra~$A$
given by~$[k\to k]$
living in degrees~$-1$ and~$0$.
Because the morphism in the Koszul resolution is multiplication with the defining equation,
and~$h\in\segre$
we obtain that
the restricted differential vanishes.
Hence~$A=k[\epsilon]/(\epsilon^2)$ is the formal dg~algebra
where~$|\epsilon|=-1$.
The dg algebra~$A$ is an ingredient in the theory of absorption of nodal singularities
as introduced in \cite{kuznetsov-shinder-absorption} (see also \cite[Proposition 5.11 and \S5.3]{2111.00527v1}).

What is interesting in this case is that~$X_h$ has a derived structure,
but~$Y_H$ is a singular variety.
This frequently happens when~$X$ is (similar to) a closed subvariety.
In the next example \emph{both} fiber products will have a derived structure.

\paragraph{Plane sections of the Segre cubic}
Let~$L\subset W^\vee$ be a subspace of codimension~2.
We will write~$P=\mathbb{P}(L)\subset\mathbb{P}(W^\vee)$,
and~$p\subset\mathbb{P}(W)$ for the dual projective line.
We wish to describe~$X_P$ and~$Y_p$.

If~$P\subseteq\segre$ it is one of the fifteen Segre planes,
and~$X\times_{\mathbb{P}(W^\vee)}P$ is
either~$\mathbb{F}_1\cong\Bl_1\mathbb{P}^2$
or~$S\cong\Bl_4\mathbb{P}^2$.
This can be deduced from the arguments at the end of \cref{subsection:hyperplane-segre}
for hyperplane sections of~$\segre$
containing a Segre plane.
In any case it is not of the expected dimension.
On the dual side
$p$ is one of the~15~lines of the singular locus of~$\igusa$,
and~$Y\times_{\mathbb{P}(W)}p$ is described in \cite[Lemmas~2.41 and~2.43]{MR4076812}.
It is again not of the expected dimension.
In \eqref{equation:linear-sections-sod} there are no contributions from the Lefschetz center,
and we obtain an equivalence of categories
\begin{equation}
  \derived^\bounded(X_P)\cong\derived^\bounded(Y_p)
\end{equation}
where both~$X_P$ and~$Y_p$ are surfaces equipped with a derived structure.

If~$P$ is not strictly contained within the Segre cubic,
we obtain a plane cubic curve.
Let us first consider a plane cubic~$P\cap\segre$ contained in a smooth hyperplane section of~$\segre$
(as discussed in \cref{lemma:smooth-segre-hyperplane-criterion})
such that~$p\cap\igusa[\sing]=\emptyset$.
Then~$P\cap\segre$ is a reduced (but possibly reducible) plane cubic.
All cases except three lines meeting in one point
arise for a smooth cubic surface.
For this last case to occur the cubic surface needs to contain an Eckardt point.


Again we obtain an equivalence
\begin{equation}
  \derived^\bounded(X_P)\cong\derived^\bounded(Y_p),
\end{equation}
now for a (possibly singular) plane cubic
and a (possibly singular) double cover of~$\mathbb{P}^1$ ramified in a subscheme of length~4.
The reconstruction result \cite[Theorem~1.1]{MR3217412}
shows that we in fact have an isomorphism~$X_P\cong Y_p$.
We leave the non-generic situation to the interested reader.

%
%
%
%
%
\paragraph{Plane sections of the Coble fourfold}
Finally,
let~$L\subset W$ be a subspace of codimension~2.
We will write~$P=\mathbb{P}(L)\subset\mathbb{P}(W)$,
and~$p\subset\mathbb{P}(W^\vee)$ for the dual projective line.

There are~2~possible scenarios for~$p\cap\segre$:
\begin{itemize}
  \item $p\subset\segre$: there is a Fano surface of lines on~$\segre$
    described in \cite[\S4]{MR3776662},
    such that~$X_p$ acquires a derived structure
    and its geometry moreover depends on the position of~$p$ with respect to~$\segre[\sing]$;
  \item $\dim(p\cap\segre)=0$: the intersection is a scheme of length 3.
\end{itemize}
Let us consider the generic case of a zero-dimensional intersection.
Then~$X_p$ consists of~3~points
which avoid the singular locus of~$\segre$.
On the dual side we obtain a double cover of~$P=\mathbb{P}^2$,
ramified in a quartic~$P\cap\igusa$
which avoids~$\igusa[\sing]$,
so that~$Y_P$ is a (smooth) del Pezzo double plane,
a del Pezzo surface of degree~2.

By \eqref{equation:linear-sections-sod} we obtain semiorthogonal decompositions
\begin{equation}
  \label{equation:del-pezzo-double-plane}
  \begin{aligned}
    \derived^\bounded(X_p)&=\langle E_1,E_2,E_3\rangle \\
    \derived^\bounded(Y_p)&=\langle \derived^\bounded(X_p),\derived^\bounded(S)\rangle
  \end{aligned}
\end{equation}
where~$E_1,E_2,E_3$ are completely orthogonal.
Similar to \eqref{equation:sasha-composition}
we get that the composition~$Y_p\to S$
is the blowup in~3~points.
Generically it will give a del Pezzo surface as in \cref{table:generic},
but we will not perform the case-by-case analysis
of when the~3~points are not in general position
with respect to the~10~lines on~$S$.

If~$p\cap\segre[\sing]=\emptyset$ and~$X_p$ is still reduced
but~$P\cap\igusa[\sing]\neq\emptyset$
the fiber product~$Y_P$ is a smooth weak del Pezzo surface of degree~2,
and \eqref{equation:del-pezzo-double-plane} still holds.
If on the other hand~$p\cap\segre[\sing]=\emptyset$ and~$X_p$ non-reduced,
we obtain that~$Y_P$ is a singular weak del Pezzo surface of degree~2,
such that~$\derived^\bounded(X_p)$ in \eqref{equation:del-pezzo-double-plane}
becomes~$\derived^\bounded(\Spec k[\epsilon]/(\epsilon^2)\times k)$
or~$\derived^\bounded(\Spec k[\epsilon]/(\epsilon^3))$.
We leave it to the interested reader to match this up with the classification of \cite[\S8.7.1]{MR2964027}

If~$p\cap\segre[\sing]\neq\emptyset$ the fiber product~$X_p$ acquires a derived structure
and we will not discuss it further.

\section{The Segre cubic vs.~moduli of quiver representations}
\label{section:segre-quiver}
As explained in \cref{section:segre-igusa-coble} the Segre cubic has a modular interpretation
as the Satake compactification of a Picard modular variety.
This interpretation a priori does not yield anything interesting for the resolution~$X$
from the point of view of homological projective duality.
But there is a different modular interpretation of~$\segre$ and the resolution~$X$,
which gives rise to a \emph{second} rectangular Lefschetz decomposition.

By \cite[page~17]{MR1007155} and \cite[\S5.1]{MR4352662} we have that~$\segre$ is
the moduli space of semistable quiver representations for the~6\dash subspace quiver
\begin{equation}
  \label{equation:6-subspace}
  \subspace_6\colon
  \begin{tikzpicture}[baseline,vertex/.style={draw, circle, inner sep=0pt, text width=2mm}]
    \node[vertex] (a) at (0:.8cm)   {};
    \node[vertex] (b) at (60:.8cm)  {};
    \node[vertex] (c) at (120:.8cm) {};
    \node[vertex] (d) at (180:.8cm) {};
    \node[vertex] (e) at (240:.8cm) {};
    \node[vertex] (f) at (300:.8cm) {};

    \node[vertex] (g) at (0,0) {};

    \draw[-{Classical TikZ Rightarrow[]}] (a) -- (g);
    \draw[-{Classical TikZ Rightarrow[]}] (b) -- (g);
    \draw[-{Classical TikZ Rightarrow[]}] (c) -- (g);
    \draw[-{Classical TikZ Rightarrow[]}] (d) -- (g);
    \draw[-{Classical TikZ Rightarrow[]}] (e) -- (g);
    \draw[-{Classical TikZ Rightarrow[]}] (f) -- (g);
  \end{tikzpicture}
\end{equation}
where we use the dimension vector~$(1,1,1,1,1,1;2)$,
and the canonical stability condition as discussed in \cite[\S2.2]{MR4352662}.
This stability condition lies on a wall,
and by considering a small perturbation of the stability condition
we can obtain small desingularisations \cite[Theorem~4.3]{MR3644807}.
Two particular choices were studied in \cite{MR3778138},
and by the description of their automorphisms we know that they correspond to types~IV and~VI in \cite{MR0914085},
hence by \cref{remark:small-resolutions-segre}
we know that they are isomorphic to~$\mathbb{P}_S(\mathcal{U}_2)$ and~$\Bl_5\mathbb{P}^3$.

\paragraph{A fully faithful functor in modular settings}
On the other hand,
in full generality there is
an expected relationship between~$\derived^\bounded(kQ)$ and~$\derived^\bounded(M)$,
where~$Q$ is an acyclic quiver and~$M$ is a suitable moduli space of semistable representations.
Namely we expect that,
for the right choice of dimension vector and stability condition,
there is a fully faithful embedding of~$\derived^\bounded(kQ)$ into~$\derived^\bounded(M)$
given by the universal representation.

There exists a rich literature on similar admissible embeddings
into derived categories of moduli spaces:
\begin{itemize}
  \item for curves of~$g\geq 2$ and moduli spaces of vector bundles \cite{MR3764066,MR3713871,MR3954042,2106.04857v1};
  \item for Hilbert schemes of points \cite{MR3397451,MR3950704}
\end{itemize}
In the case of noncommutative algebra, we refer to
\begin{itemize}
  \item for two noncommutative surfaces and Hilbert schemes of points \cite{MR4165471,MR3488782};
  \item for quivers, provided the dimension vector is thin (i.e.~the moduli space is toric) \cite{MR1688469}.
\end{itemize}

\paragraph{Mutating the original Lefschetz structure}
Starting from the Lefschetz structure in \cref{theorem:main-theorem-precise}
we can perform a sequence of mutations to find a new Lefschetz structure,
whose existence is related to the expectation outlined above.

%
%
%
%
\begin{proposition}
  \label{proposition:3-block-collection}
  Let~$S$ be the del Pezzo surface of degree~5.
  There exists a 3-block exceptional collection
  \begin{equation}
    \label{equation:3-block}
    \derived^\bounded(S)=
    \langle
      \mathcal{O}_S;
      \mathcal{O}_S(h-e_1),
      \mathcal{O}_S(h-e_2),
      \mathcal{O}_S(h-e_3),
      \mathcal{O}_S(h-e_4),
      \mathcal{O}_S(2h-e);
      \mathcal{U}_2^\vee
    \rangle
  \end{equation}
  where~$h$ is the pullback of the hyperplane class of~$\mathbb{P}^2$,
  $e_1,\ldots,e_4$ are the classes of the exceptional divisors,
  we set~$e\colonequals e_1+e_2+e_3+e_4$,
  and~$\mathcal{U}_2$ is as in \cref{subsection:resolutions}.
  The~5~line bundles in the middle block correspond to the~5~conic bundle structures on~$S$.

  Moreover, we have isomorphisms
  \begin{equation}
    \label{equation:ext-isomorphisms}
    \Ext_S^\bullet(\mathcal{O}_S(h-e_i),\mathcal{U}_2^\vee)\cong k[0],\qquad\Ext_S^\bullet(\mathcal{O}_S(2h-e),\mathcal{U}_2^\vee)\cong k[0].
  \end{equation}
\end{proposition}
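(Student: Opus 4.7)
The plan is to work on the model $S = \Bl_{p_1, \ldots, p_4}\mathbb{P}^2$, with Picard basis $h, e_1, \ldots, e_4$ (so $h^2 = 1$, $e_i^2 = -1$, other intersections zero) and $-K_S = 3h - e$ where $e = e_1 + \cdots + e_4$. The five classes $h - e_i$ and $2h - e$ are precisely the fiber classes of the five $\mathbb{P}^1$-fibrations $S \to \mathbb{P}^1$---the pencils of lines through each $p_i$ and the pencil of conics through all four points---so the identification of the middle block with the conic bundle structures is immediate.

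Exceptionality of each line bundle is automatic since $S$ is a rational surface with $\HH^1(\mathcal{O}_S) = \HH^2(\mathcal{O}_S) = 0$. For the semiorthogonality within and between the first two blocks, the argument is uniform: for any two line bundles $\mathcal{L}, \mathcal{L}'$ involved, compute $\Ext^\bullet(\mathcal{L}, \mathcal{L}') = \HH^\bullet(S, \mathcal{L}' \otimes \mathcal{L}^{-1})$ by Riemann--Roch and check that neither the resulting divisor class nor its Serre dual (obtained by adding $K_S$) is effective. The difference classes that arise fall into one of two patterns: either the $h$-degree is too low for the class or its Serre dual to be effective, or the class would correspond to a line through three of the four marked points, which is ruled out by general position. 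In each case Riemann--Roch gives $\chi = 0$, so all $\Ext$-groups vanish, yielding complete orthogonality within the middle block together with semiorthogonality of the middle block over $\mathcal{O}_S$.

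For the third block we use the defining sequence $0 \to \mathcal{U}_2 \to W^\vee \otimes \mathcal{O}_S \to \mathcal{Q}_3 \to 0$ and its dual. Exceptionality $\Ext^\bullet(\mathcal{U}_2^\vee, \mathcal{U}_2^\vee) = k[0]$ follows from slope stability of $\mathcal{U}_2$ (inherited from homogeneity on the Grassmannian) combined with twists of this sequence. The semiorthogonality $\Ext^\bullet(\mathcal{U}_2^\vee, \mathcal{L}) = \HH^\bullet(S, \mathcal{U}_2 \otimes \mathcal{L}) = 0$ for $\mathcal{L} \in \{\mathcal{O}_S, \mathcal{O}_S(h - e_i), \mathcal{O}_S(2h - e)\}$ is read off the long exact sequence of the twisted defining sequence, using the cohomology of $\mathcal{Q}_3 \otimes \mathcal{L}$. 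The isomorphisms \eqref{equation:ext-isomorphisms} are then obtained dually, from $0 \to \mathcal{Q}_3^\vee \to W \otimes \mathcal{O}_S \to \mathcal{U}_2^\vee \to 0$ twisted by $\mathcal{O}_S(e_i - h)$ or $\mathcal{O}_S(e - 2h)$: the middle term vanishes by the computations of the previous paragraph, so the statement reduces to computing $\HH^\bullet(S, \mathcal{Q}_3^\vee \otimes \mathcal{O}_S(e_i - h))$ and its $(2h - e)$-analogue, each giving a one-dimensional contribution in degree zero. Fullness of the collection then follows because its length $7$ matches $\rk \mathrm{K}_0(S) = 2 + \rk \mathrm{Pic}(S)$.

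The main obstacle is the cohomological analysis of $\mathcal{Q}_3$, $\mathcal{Q}_3^\vee$ and their relevant twists on $S$: since $S$ is only a linear section of $\Gr(2, W^\vee)$, Borel--Weil--Bott does not apply directly. The cleanest resolution is either to pull back Kapranov's exceptional collection on the Grassmannian along the Koszul resolution of $S \subset \Gr(2, W^\vee)$, or to give an explicit description of $\mathcal{U}_2|_S$ as an extension of line bundles in the $\Bl_4 \mathbb{P}^2$ model, thereby reducing everything to Riemann--Roch on line bundles as in the second paragraph.
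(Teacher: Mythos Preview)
Your approach differs from the paper's: you attempt direct verification on the $\Bl_4\mathbb{P}^2$ model, whereas the paper obtains \eqref{equation:3-block} by mutating the Karpov--Nogin 3-block collection (tensor the last block by $\omega_S$ to move it to the front, then right-mutate the resulting first two blocks). The identification $\mathcal{U}_2^\vee\cong\mathcal{F}$, with $\mathcal{F}$ the universal extension of $\mathcal{O}_S(h)$ by $\mathcal{O}_S(-\omega_S-h)$, is made by matching Chern classes and invoking uniqueness of rank-2 exceptional bundles with given numerics. The isomorphisms \eqref{equation:ext-isomorphisms} then follow from this extension description together with elementary line-bundle vanishings; no cohomology of $\mathcal{Q}_3$ or $\mathcal{Q}_3^\vee$ is ever needed. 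Your second proposed fix for the $\mathcal{Q}_3$ obstacle (write $\mathcal{U}_2|_S$ as an extension of line bundles) is precisely this, so that branch of your outline converges to the paper's argument.

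There is a genuine gap in your fullness step. An exceptional collection of length $\rk\mathrm{K}_0(S)$ is \emph{not} automatically full: phantom categories exist in general. For del Pezzo surfaces the implication does hold, but it is a theorem (Kuleshov--Orlov), not a counting observation, and you must cite it if you go this route. The paper avoids the issue entirely because mutation from a known full collection preserves fullness. A minor point as well: in your connecting-map computation the vanishing of the middle term gives $\HH^0(\mathcal{U}_2^\vee\otimes\mathcal{L}^{-1})\cong\HH^1(\mathcal{Q}_3^\vee\otimes\mathcal{L}^{-1})$, so the one-dimensional contribution sits in degree $1$ on the $\mathcal{Q}_3^\vee$ side, not degree $0$.
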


\begin{proof}
  We obtain this collection by mutating the 3-block exceptional collection from \cite[page~452]{MR1642152}.
  Their collection, written using the notation of the statement of the proposition, is
  \begin{equation}
    \label{equation:karpov-nogin-3-block}
    \derived^\bounded(S)=
    \langle
      \mathcal{O}_S;
      \mathcal{F};
      \mathcal{O}_S(h),
      \mathcal{O}_S(e_1-\omega_S-h),
      \mathcal{O}_S(e_2-\omega_S-h),
      \mathcal{O}_S(e_3-\omega_S-h),
      \mathcal{O}_S(e_4-\omega_S-h)
    \rangle
  \end{equation}
  where~$\mathcal{F}$ is the vector bundle obtained as the universal extension
  \begin{equation}
    \label{equation:definition-of-F}
    0\to\mathcal{O}_S(-\omega_S-h)\to\mathcal{F}\to\mathcal{O}_S(h)\to 0.
  \end{equation}
  We have that~$\mathcal{F}\cong\mathcal{U}_2^\vee$.
  Indeed,
  the Chern classes of $\mathcal{F}$ and $\mathcal{U}_2^\vee$ satisfy
  \begin{equation}
    \mathrm{c}_1(\mathcal{F})= -\omega_S = \mathrm{c}_1(\mathcal{U}_2^\vee),
    \quad\mathrm{c}_2(\mathcal{F}) = -\omega_Sh-h^2=2=\mathrm{c}_2(\mathcal{U}_2^\vee).
  \end{equation}
  The last equality follows from classical Schubert calculus saying that $\sigma_1^4\sigma_{1,1}=2$ on~$\Gr(2,5)$.
  Since both bundles are of rank two and exceptional
  (for~$\mathcal{U}_2^\vee$ this is a Koszul computation)
  they must be isomorphic by \cite[Proposition~1.3]{MR1642152}.

  Mutating the third block of the last~5~line bundles in \eqref{equation:karpov-nogin-3-block}
  to the very left corresponds to tensoring the objects with~$\omega_S$.
  Now perform a right mutation on the first two blocks.
  The resulting mutated exceptional sheaves are computed using \emph{division},
  i.e.~they are the cokernels in the short exact sequences
  \begin{equation}
    0\to\mathcal{O}_S(\omega_S+h)\to\Hom_S(\mathcal{O}_S(\omega_S+h),\mathcal{O}_S)^\vee\otimes\mathcal{O}_S\to\mathcal{O}_S(2h-e)\to 0
  \end{equation}
  and
  \begin{equation}
    0\to\mathcal{O}_S(e_i-h)\to\Hom_S(\mathcal{O}_S(e_i-h),\mathcal{O}_S)^\vee\otimes\mathcal{O}_S\to\mathcal{O}_S(h-e_i)\to 0.
  \end{equation}
  The Hom-spaces in the middle term are~2\dash dimensional,
  as they are identified with the global sections of the~5~conic bundle structures on~$S$,
  and the identification of the cokernels is a Chern class computation.
  The result is the 3-block exceptional collection in \eqref{equation:3-block}.

  The identifications in \eqref{equation:ext-isomorphisms} follow from \eqref{equation:definition-of-F},
  so that we obtain
  \begin{equation}
    \begin{aligned}
      \Ext_S^\bullet(\mathcal{O}_S(h-e_i),\mathcal{U}_2^\vee)&\cong\HH^\bullet(S,\mathcal{O}_S(e_i)) \\
      \Ext_S^\bullet(\mathcal{O}_S(2h-e),\mathcal{U}_2^\vee)&\cong\HH^\bullet(S,\mathcal{O}_S).
    \end{aligned}
  \end{equation}
  by the vanishing of~$\HH^\bullet(S,\mathcal{O}_S(h-e+e_i))$ and~$\HH^\bullet(S,\mathcal{O}_S(e-h))$ thanks to the projection formula.
\end{proof}

%

From Orlov's projective bundle formula applied to~$p\colon X\to S$
we therefore obtain the semiorthogonal decomposition
\begin{equation}
  \label{equation:orlov}
  \begin{aligned}
    &\derived^\bounded(X)=
    \langle
      \mathcal{O}_X;
      \mathcal{O}_X(h-e_i)\mid i=1,2,3,4;
      \mathcal{O}_X(2h-e),
      p^*\mathcal{U}_2^\vee; \\
      &\quad
      \mathcal{O}_X(s);
      \mathcal{O}_X(s+h-e_i)\mid i=1,2,3,4;
      \mathcal{O}_X(s+2h-e),
      p^*\mathcal{U}_2^\vee(s)
    \rangle
  \end{aligned}
\end{equation}
where we denote the relative hyperplane class by~$s$.
This is of course nothing but the decomposition \eqref{equation:lefschetz-for-segre-cubic}
written as a full exceptional collection using \eqref{equation:3-block}.
We can now modify this collection into a \emph{new} Lefschetz decomposition,
with a Lefschetz center that is not equivalent to the Lefschetz center in \eqref{equation:lefschetz-for-segre-cubic}.

%
%
%
%
%
%
%
%

\begin{proposition}
  \label{proposition:quiver-lefschetz-structure}
  There exists a rectangular Lefschetz decomposition
  \begin{equation}
    \derived^\bounded(X)=
    \langle\mathcal{A}_0,\mathcal{A}_1(1)\rangle
  \end{equation}
  with respect to the line bundle~$\mathcal{O}_X(1)=\mathcal{O}_X(s)=f^*\mathcal{O}_{\mathbb{P}(W^\vee)}(1)$,
  where the Lefschetz center is
  \begin{equation}
    \label{equation:quiver-lefschetz-center}
    \mathcal{A}_0=
    \langle
      \mathcal{O}_X(h-e_i)\mid i=1,\ldots,4;
      \mathcal{O}_X(2h-e),
      \mathcal{O}_X(3h-e-s),
      p^*\mathcal{U}_2^\vee
    \rangle
  \end{equation}
  such that~$\mathcal{A}_0=\mathcal{A}_1\cong\derived^\bounded(k\subspace_6)$
  is the derived category of the 6-subspace quiver from \eqref{equation:6-subspace}.
\end{proposition}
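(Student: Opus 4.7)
The plan has two parts: first, to establish the rectangular Lefschetz decomposition with center \eqref{equation:quiver-lefschetz-center}, and second, to identify the Ext-algebra of its seven generators with the path algebra $k\subspace_6$. The central geometric input is the relative Euler sequence for the $\mathbb{P}^1$-bundle $p\colon X=\mathbb{P}_S(\mathcal{U}_2)\to S$: dualising the tautological inclusion $\mathcal{O}_X(-s)\hookrightarrow p^*\mathcal{U}_2$ and using $\det\mathcal{U}_2\cong\omega_S=\mathcal{O}_S(-3h+e)$, we obtain
\begin{equation*}
  0\to\mathcal{O}_X(3h-e-s)\to p^*\mathcal{U}_2^\vee\to\mathcal{O}_X(s)\to 0.
\end{equation*}
Combined with $\Hom_X(p^*\mathcal{U}_2^\vee,\mathcal{O}_X(s))\cong\HH^0(S,\mathrm{End}(\mathcal{U}_2))=k$ from the projection formula and exceptionality of $\mathcal{U}_2$, this triangle exhibits $\mathcal{O}_X(3h-e-s)$ as the left mutation of $\mathcal{O}_X(s)$ through $p^*\mathcal{U}_2^\vee$, and hence identifies the ordered pairs $\langle p^*\mathcal{U}_2^\vee,\mathcal{O}_X(s)\rangle$ and $\langle\mathcal{O}_X(3h-e-s),p^*\mathcal{U}_2^\vee\rangle$ as spanning the same admissible subcategory of $\derived^\bounded(X)$.

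For the Ext-algebra of the proposed generators: the five pullbacks $\mathcal{O}_X(h-e_i)$ and $\mathcal{O}_X(2h-e)$ are mutually orthogonal exceptional objects by \eqref{equation:3-block}, each with one-dimensional Hom into $p^*\mathcal{U}_2^\vee$ by \eqref{equation:ext-isomorphisms}, and $\mathcal{O}_X(3h-e-s)$ is completely orthogonal to the five pullbacks because $Rp_*\mathcal{O}_X(-s)=0$ collapses the relevant Exts via the projection formula, while its Hom into $p^*\mathcal{U}_2^\vee$ is generated by the inclusion in the Euler sequence. The resulting Ext-algebra is the path algebra of \eqref{equation:6-subspace}, with the six ``source'' line bundles corresponding to the outer vertices and $p^*\mathcal{U}_2^\vee$ to the central sink, yielding $\mathcal{A}_0\cong\derived^\bounded(k\subspace_6)$. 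To conclude that $\langle\mathcal{A}_0,\mathcal{A}_0(1)\rangle$ is a semi-orthogonal decomposition of $\derived^\bounded(X)$ one verifies right-orthogonality of $\mathcal{A}_0(1)$ to $\mathcal{A}_0$ by a parallel batch of projection-formula reductions, the non-obvious Exts all unwinding through the resolution $0\to\mathcal{O}_S(2h-e)\to\mathcal{U}_2^\vee\to\mathcal{O}_S(h)\to 0$ of \eqref{equation:definition-of-F} and Serre duality on~$S$; and concludes with the observation that the fourteen K-classes of the exceptional collection form a basis of $\mathrm{K}_0(X)$, which is of rank fourteen by the projective bundle formula, ruling out a phantom right orthogonal since \eqref{equation:orlov} provides a full exceptional collection on~$X$.

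The main technical obstacle is the batch of cohomology computations on~$S$ involving sheaves of the form $\mathcal{O}_S(D)\otimes\mathcal{U}_2^\vee$ and $\mathcal{O}_S(D)\otimes\Sym^2\mathcal{U}_2^\vee$ for various twists~$D$, needed to pin down the Exts involving $\mathcal{O}_X(3h-e-s)$ and its $\mathcal{O}_X(s)$-twist $\mathcal{O}_X(3h-e)$. These all unwind through \eqref{equation:definition-of-F} and Serre duality on~$S$, but systematically tracking all cases is the bulk of the remaining work. A more mutation-theoretic alternative would derive the new decomposition from \eqref{equation:orlov} by performing the substitution $\langle p^*\mathcal{U}_2^\vee,\mathcal{O}_X(s)\rangle\leftrightarrow\langle\mathcal{O}_X(3h-e-s),p^*\mathcal{U}_2^\vee\rangle$ at the junction of the two blocks and its $\mathcal{O}_X(s)$-twist at the right end; there the difficulty shifts to absorbing the leftover copy of $\mathcal{O}_X$ that survives after the two substitutions, using the relation $\omega_X=\mathcal{O}_X(-2s)$ and the Serre functor.
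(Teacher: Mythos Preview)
Your argument is correct and shares its core with the paper's: the identification of $\mathcal{O}_X(3h-e-s)$ via the relative Euler sequence as the left mutation of $\mathcal{O}_X(s)$ through $p^*\mathcal{U}_2^\vee$, and the computation of the Ext-algebra of the seven generators as $k\subspace_6$, are exactly what the paper does.

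Where you diverge is in establishing the Lefschetz decomposition itself. What you relegate to a closing ``alternative'' is in fact the paper's main route, and it is much shorter than your primary one. The paper first right-mutates $\mathcal{O}_X$ in \eqref{equation:orlov} across its entire right orthogonal; since $\omega_X^{-1}=\mathcal{O}_X(2s)$, this produces $\mathcal{O}_X(2s)$ as the last object, and the resulting fourteen-term collection is visibly rectangular with center $\langle\mathcal{O}_X(h-e_i),\mathcal{O}_X(2h-e),p^*\mathcal{U}_2^\vee,\mathcal{O}_X(s)\rangle$. Only then does one left-mutate $\mathcal{O}_X(s)$ through $p^*\mathcal{U}_2^\vee$ \emph{inside} the center to reach \eqref{equation:quiver-lefschetz-center}. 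Done in this order there is no ``leftover $\mathcal{O}_X$ to absorb'' and no batch of $7\times 7$ orthogonality checks: both semiorthogonality and fullness are inherited from \eqref{equation:orlov} through the mutations.

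Your primary route of direct verification would work, but one step is not quite complete as written: matching the rank of $\mathrm{K}_0(X)$ does not by itself exclude a phantom orthogonal. You do not actually need K-theory here, since you have already identified $\langle p^*\mathcal{U}_2^\vee,\mathcal{O}_X(s)\rangle=\langle\mathcal{O}_X(3h-e-s),p^*\mathcal{U}_2^\vee\rangle$; combining this with the Serre-functor move of $\mathcal{O}_X$ shows that $\langle\mathcal{A}_0,\mathcal{A}_0(1)\rangle$ coincides with the subcategory generated by \eqref{equation:orlov}, hence is all of $\derived^\bounded(X)$. In short: promote your alternative to the main argument and the ``main technical obstacle'' disappears.
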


\begin{proof}
  The right mutation of~$\mathcal{O}_X$ with respect to its orthogonal complement
  gives the anticanonical line bundle~$\mathcal{O}_X(2s)$ as the final object.
  We obtain a rectangular Lefschetz decomposition
  with Lefschetz center
  \begin{equation}
    \langle
      \mathcal{O}_X(h-e_i)\mid i=1,\ldots,4;
      \mathcal{O}_X(2h-e),
      p^*\mathcal{U}_2^\vee,
      \mathcal{O}_X(s)
    \rangle
  \end{equation}
  To identify this with \eqref{equation:quiver-lefschetz-center}
  and show that~$\mathcal{A}_0\cong\derived^\bounded(k\subspace_6)$
  we do the left mutation on the objects~$p^*\mathcal{U}_2^\vee$
  and~$\mathcal{O}_X(s)$
  which does not change the Lefschetz center.
  The left mutation is defined by the kernel in the short exact sequence
  \begin{equation}
    0\to\mathrm{L}_{p^*\mathcal{U}_2^\vee}\mathcal{O}_X(s)\to\Hom_X(p^*\mathcal{U}_2^\vee,\mathcal{O}_X(s))\otimes p^*\mathcal{U}_2^\vee\to\mathcal{O}_X(s)\to 0,
  \end{equation}
  using the identification~$\mathcal{U}_2^\vee\cong p_*\mathcal{O}_X(s)$
  and the exceptionality of~$\mathcal{U}_2^\vee$,
  which gives the surjectivity
  and the kernel is a line bundle by a rank computation.
  The identification with~$\mathcal{O}_X(3h-e-s)$
  follows from a Chern class computation using \eqref{equation:definition-of-F}.

  We can now check that~$\mathcal{A}_0\cong\derived^\bounded(k\subspace_6)$.
  We have that
  \begin{equation}
    \Ext_X^\bullet(\mathcal{O}_X(h-e_i),p^*\mathcal{U}_2^\vee)\cong\Ext_X^\bullet(\mathcal{O}_X(2h-e),p^*\mathcal{U}_2^\vee)\cong k[0]
  \end{equation}
  by the last part of \cref{proposition:3-block-collection}.
  The first 6~objects are completely orthogonal:
  the first 5~because they originate from a block in the 3-block collection,
  the orthogonality with the 6th object follows from the vanishing of~$\RRR p_*\mathcal{O}_X(-s)$ in one direction
  and from the exceptional sequence in the other.
\end{proof}

\paragraph{Comparison of Lefschetz structures}
We have now obtained~3~(rectangular) Lefschetz structures on~$\derived^\bounded(X)$:
\begin{enumerate}
  \item the projective bundle Lefschetz structure used in \cref{theorem:main-theorem-precise},
    where the Lefschetz center is~$\derived^\bounded(S)$;

  \item the blowup Lefschetz structure from \eqref{equation:blowup-lefschetz-structure} in \cref{proposition:blowup-lefschetz-structure},
    where the Lefschetz center is~$\derived^\bounded(A)$
    for the finite-dimensional algebra~$A=kQ/I$ where~$Q$ is the quiver
    \begin{equation}
      \begin{tikzpicture}[scale=.75,baseline,vertex/.style={draw, circle, inner sep=0pt, text width=2mm}]
        \node[vertex] (a) at (-1,0)  {};
        \node[vertex] (b) at (1,0)  {};
        \node[vertex] (c) at (3,2)  {};
        \node[vertex] (d) at (3,1)  {};
        \node[vertex] (e) at (3,0)  {};
        \node[vertex] (f) at (3,-1) {};
        \node[vertex] (g) at (3,-2) {};

        \draw[-{Classical TikZ Rightarrow[]}] (a) to [bend left=90]  node[fill=white] {$w$} (b);
        \draw[-{Classical TikZ Rightarrow[]}] (a) to [bend left=25]  node[fill=white] {$x$} (b);
        \draw[-{Classical TikZ Rightarrow[]}] (a) to [bend right=25] node[fill=white] {$y$} (b);
        \draw[-{Classical TikZ Rightarrow[]}] (a) to [bend right=90] node[fill=white] {$z$} (b);
        \draw[-{Classical TikZ Rightarrow[]}] (b) to node[fill=white] {$a$} (c);
        \draw[-{Classical TikZ Rightarrow[]}] (b) to node[fill=white] {$b$} (d);
        \draw[-{Classical TikZ Rightarrow[]}] (b) to node[fill=white] {$c$} (e);
        \draw[-{Classical TikZ Rightarrow[]}] (b) to node[fill=white] {$d$} (f);
        \draw[-{Classical TikZ Rightarrow[]}] (b) to node[fill=white] {$e$} (g);
      \end{tikzpicture}
    \end{equation}
    and~$I$ is the ideal of relations
    \begin{equation}
      I=(xa, ya, za,
        wb, yb, zb,
        wc, xc, zc,
        wd, xd, yd,
        we-xe, xe-ye, ye-ze)
    \end{equation}
    which encodes~5~points in general position on~$\mathbb{P}_{w:x:y:z}^3$
    as up to the action of~$\mathrm{PGL}_4$ we can take these to be
    \begin{equation}
      (1:0:0:0),(0:1:0:0),(0:0:1:0),(0:0:0:1),(1:1:1:1),
    \end{equation}
    so that the structure follows from the composition law in \eqref{equation:blowup-lefschetz-structure};

  \item the quiver Lefschetz structure from \cref{proposition:quiver-lefschetz-structure},
    where the Lefschetz center is~$\derived^\bounded(k\subspace_6)$.
\end{enumerate}

The following propositions gives a comparison between these Lefschetz structures,
in the sense of \cite[Definition~6.9]{MR3948688}.
We have that
\begin{itemize}
  \item the blowup and quiver Lefschetz structures agree;
  \item the projective bundle Lefschetz structure is different from the other two.
\end{itemize}

\begin{proposition}
  \label{proposition:lefschetz-comparisons}
  The projective bundle Lefschetz structure
  is not Lefschetz equivalent
  to the quiver Lefschetz structure.
\end{proposition}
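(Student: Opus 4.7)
The plan is to distinguish the two Lefschetz centers as abstract $k$-linear triangulated categories, which suffices because a Lefschetz equivalence in the sense of \cite[Definition~6.9]{MR3948688} restricts to a triangulated equivalence of Lefschetz centers. I first identify the two centers: for the projective bundle structure, the center is $p^*(\derived^\bounded(S))$, equivalent to $\derived^\bounded(S)$ since $p^*$ is fully faithful (because $\RRR p_*\mathcal{O}_X = \mathcal{O}_S$), where $S$ is the quintic del Pezzo surface; by \cref{proposition:quiver-lefschetz-structure}, the quiver structure has center equivalent to $\derived^\bounded(k\subspace_6)$.

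To distinguish these, I would compute Hochschild cohomology in degree two, which is a derived invariant for algebraic triangulated categories. The path algebra $k\subspace_6$ of an acyclic quiver is hereditary, hence $\HHHH^i(k\subspace_6) = 0$ for all $i \geq 2$. For the smooth projective surface $S$, the Hochschild--Kostant--Rosenberg decomposition reads
\begin{equation}
  \HHHH^2(S) = \HH^2(S, \mathcal{O}_S) \oplus \HH^1(S, \tangent_S) \oplus \HH^0(S, \textstyle\bigwedge^2 \tangent_S).
\end{equation}
The first two summands vanish because $S$ is rational (so $\HH^2(\mathcal{O}_S) = 0$) and a rigid del Pezzo (so $\HH^1(\tangent_S) = 0$). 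The third equals $\HH^0(S, \omega_S^\vee)$, the space of anticanonical sections, of dimension $\mathrm{K}_S^2 + 1 = 6$ by Riemann--Roch together with Kodaira vanishing. Hence $\HHHH^2(\derived^\bounded(S)) \neq 0$, and the two centers cannot be triangulated equivalent.

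The computation is routine once one chooses Hochschild cohomology as the distinguishing invariant. The only conceptual step is verifying that a Lefschetz equivalence of ambient categories restricts to an equivalence of Lefschetz centers, which is essentially built into the definition. There is no serious obstacle.
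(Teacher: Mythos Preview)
Your argument is correct and essentially identical to the paper's: both distinguish the Lefschetz centers $\derived^\bounded(S)$ and $\derived^\bounded(k\subspace_6)$ via $\HHHH^2$, using the Hochschild--Kostant--Rosenberg decomposition to get $\HHHH^2(S)\cong\HH^0(S,\omega_S^\vee)\cong k^6$ and hereditarity of $k\subspace_6$ to get $\HHHH^2(k\subspace_6)=0$. You supply a bit more detail on why the other HKR summands vanish, but the approach is the same.
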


\begin{proof}
  We observe that the Lefschetz centers are already inequivalent.
  Indeed,~$\derived(S)$ is not equivalent to~$\derived^\bounded(k\subspace_6)$,
  as~$\HHHH^2(S)\cong\HH^0(S,\omega_S^\vee)=k^6$ by the Hochschild--Kostant--Rosenberg theorem,
  whilst~$\HHHH^2(k\subspace_6)=0$ as~$k\subspace_6$ is hereditary
  (and as~$\subspace_6$ is a tree we moreover have~$\HHHH^1(k\subspace_6)=0$).
\end{proof}

Hence we have found a genuinely new Lefschetz decomposition for~$\derived^\bounded(X)$.
On the other hand we have the following identification.

\begin{proposition}
  \label{proposition:michel}
  The blowup Lefschetz center
  is equivalent
  to the quiver Lefschetz center.
\end{proposition}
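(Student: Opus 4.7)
The plan is to reduce the equivalence to a derived equivalence between the two algebras underlying the Lefschetz centers, and then to establish this derived equivalence via a tilting complex. By Bondal's theorem applied to the strong exceptional collection in \eqref{equation:blowup-lefschetz-structure}, the blowup Lefschetz center is equivalent to $\derived^\bounded(A)$, where $A=kQ/I$ is the explicit finite-dimensional algebra described just before the proposition. By \cref{proposition:quiver-lefschetz-structure} the quiver Lefschetz center is equivalent to $\derived^\bounded(k\subspace_6)$. Thus the proposition reduces to showing that $A$ and $k\subspace_6$ are derived equivalent, and by Rickard's Morita theorem this follows from the construction of a tilting complex $T^\bullet\in\derived^\bounded(\mathrm{mod}\,k\subspace_6)$ with $\Hom(T^\bullet,T^\bullet[i])=0$ for $i\neq 0$ and $\mathrm{End}(T^\bullet)\cong A$.

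The tilting complex should have seven indecomposable summands, one for each primitive idempotent of $A$. The structure of $A$ dictates the shape: the four arrows $w,x,y,z$ from $v_1$ to $v_2$ demand a four-dimensional $\Hom$-space between the corresponding summands, and the five arrows $a,b,c,d,e$ from $v_2$ to $v_3,\ldots,v_7$ demand five one-dimensional $\Hom$-spaces from one summand to five completely orthogonal objects. A natural candidate takes the five simples $S_{u_1},\ldots,S_{u_5}$ at five outer vertices of $\subspace_6$ as the orthogonal targets for $v_3,\ldots,v_7$; a sixth summand as an indecomposable representation supported on those outer vertices and the center with all arrows non-zero, playing the role of $P_{v_2}$; and a seventh summand as a cohomological shift of the indecomposable injective $I_c$ at the central vertex. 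The key point is that the hereditary Euler form on $\subspace_6$ gives $\dim\Ext^1(I_c,P_v)=4$ for each outer projective $P_v$, which upon shifting becomes a four-dimensional $\Hom$-space realising the four arrows $w,x,y,z$ of $Q$.

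The main obstacle is not the dimension-counting but rather checking that the realised endomorphism algebra matches $A$ as an algebra, including the relations in $I$: the vanishings $xa=ya=za=0$, $wb=yb=zb=0$, et cetera, and the symmetry $we=xe=ye=ze$, must translate into specific vanishings and equalities of Yoneda compositions in $\derived^\bounded(\mathrm{mod}\,k\subspace_6)$, which requires delicate case-by-case verification. A conceptually cleaner alternative would be to use the flop derived equivalence $\derived^\bounded(X)\simeq\derived^\bounded(X')$ of \cref{remark:small-resolutions-segre} to directly transport the quiver Lefschetz center into $\derived^\bounded(X')$, then identify its image with the blowup Lefschetz center up to mutations; this avoids the explicit tilting construction but demands an explicit description of the flop kernel.
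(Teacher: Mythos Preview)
Your reduction to a derived equivalence between $A$ and $k\subspace_6$ is exactly right, and a tilting-complex approach could in principle succeed, but the proposal as written is not a proof: the step you yourself flag as ``the main obstacle''---verifying that the endomorphism algebra of the proposed complex is $A$ with its specific relations---is never carried out. Worse, the candidate you describe is internally inconsistent. You take the summand corresponding to $v_2$ to be the indecomposable $M_6$ supported on five outer vertices and the centre, yet the four-dimensional space you exhibit is $\Ext^1(I_c,P_v)$ for an outer \emph{projective} $P_v$, which is a different module. A direct Euler-form computation for your actual summands gives $\langle I_c,M_6\rangle=6-6=0$ and $\Hom(I_c,M_6)=0$, hence $\Ext^1(I_c,M_6)=0$; so no shift of $I_c$ yields a four-dimensional $\Hom$ to $M_6$, and the arrows $w,x,y,z$ are not realised. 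A correct tilting object exists, but it is not the one you wrote down, and finding it and checking the fifteen relations in $I$ is real work that remains to be done.

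The paper avoids all of this with a structural argument. Both algebras are one-point extensions: $A$ is the one-point extension of the $5$-quotient quiver by a module $M$, and $k\subspace_6$ is the one-point extension of the $5$-subspace quiver by the projective $P$ at the sink. A Hochschild cohomology computation (via Happel's long exact sequence for one-point extensions) shows $M$ is exceptional. The composition of the Nakayama functor and the BGP reflection at the sink gives a derived equivalence between the $5$-subspace and $5$-quotient path algebras sending $P$ to an exceptional module with the same dimension vector as $M$, hence to $M$ itself. Barot--Lenzing's theorem then lifts this to a derived equivalence of the one-point extensions. This sidesteps any explicit verification of relations and explains conceptually why the two centres agree.
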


\begin{proof}
  We want to prove that~$\derived^\bounded(A)\cong\derived^\bounded(k\subspace_6)$.
  One can compute (e.g.~using \cite{qpa} and the explicit presentation given above)
  that~$\HHHH^1(A)=\HHHH^2(A)=0$,
  and~$\HHHH^{\geq3}(A)=0$ by global dimension reasons.
  Now consider~$A$ as a one-point extension of the~5\dash quotient quiver
  by the representation~$M$.
  By the long exact Hochschild cohomology sequence~\cite[Theorem~5.3]{MR1035222}
  we see that~$M$ is an exceptional representation.

  On the other hand~$k\subspace_6$ can also be seen as a one-point extension,
  of the~5\dash subspace quiver,
  using the indecomposable projective (thus exceptional) representation~$P$ concentrated at the sink.

  Let~$M'$ be the image of~$P$ under the composition of
  the Nakayama functor (sending it to the indecomposable injective at the sink)
  and the reflection functor at the sink.
  This is also an exceptional representation for the~5\dash quotient quiver,
  so~$M\cong M'$ as we have~$\operatorname{\mathbf{dim}}M=\operatorname{\mathbf{dim}}M'=(1,1,1,1,1;4)$
  if we put the source as the last vertex.

  By considering the derived versions of the Nakayama functor and the reflection functor
  we get that their composition is an equivalence of derived categories for the~5\dash subspace and~5\dash quotient quiver,
  which sends~$P$ to~$M$.
  Hence by \cite[Theorem~1]{MR1980681} we obtain an induced derived equivalence between one-point extensions.
  This gives an identification of the Lefschetz centers.
\end{proof}

%

It would be interesting to further understand (noncommutative) homological projective duality
for this second Lefschetz center.

\renewcommand*{\bibfont}{\normalfont\small}
\printbibliography

\small

\emph{Pieter Belmans}, \texttt{pieter.belmans@uni.lu} \\
Universit\'e du Luxembourg, Avenue de la Fonte 6, L-4364 Esch-sur-Alzette, Luxembourg

\emph{Thorsten Beckmann}, \texttt{beckmann@math.uni-bonn.de} \\
Max--Planck Institut für Mathematik, Vivatsgasse 7, 53111 Bonn, Germany

\end{document}